\documentclass[11pt]{article}
\usepackage{graphicx}
\usepackage{amssymb, amsthm, amsmath, amsfonts}
\usepackage{mathrsfs,microtype}
\usepackage{epstopdf,color}
\usepackage{tikz}
\usepackage{hyperref}
\usepackage{todonotes}
\usepackage{enumitem}

\usetikzlibrary{arrows,decorations.pathmorphing,decorations.footprints,fadings,calc,trees,mindmap,shadows,decorations.text,patterns,positioning,shapes,matrix,fit,through,decorations.pathreplacing}
\usepackage[margin=1in]{geometry}

\theoremstyle{plain}
\newtheorem{theorem}{Theorem}[section]
\newtheorem{lemma}[theorem]{Lemma}
\newtheorem{corollary}[theorem]{Corollary}

\newtheorem{conjecture}[theorem]{Conjecture}

\theoremstyle{definition}
\newtheorem{definition}[theorem]{Definition}

\newtheorem*{notation}{Notation}

\theoremstyle{remark}

\title{Tomescu's graph coloring conjecture for $\ell$-connected graphs}
\author{ John Engbers\thanks{Department of Mathematical and Statistical Sciences, Marquette University, Milwaukee WI, 53201, USA. Email: \texttt{john.engbers@marquette.edu}. Research supported by the Simons Foundation grant 524418.}
\and Aysel Erey\thanks{Department of Mathematics, Gebze Technical University, Kocaeli, Turkey. Email: \texttt{aysel.erey@gtu.edu.tr}. Research supported by T{\"U}B{\.I}TAK grant 118C009.} \and Jacob Fox\thanks{Department of Mathematics, Stanford University, Stanford,
CA 94305, USA. Email: {\tt jacobfox@stanford.edu}. Research supported by
a Packard Fellowship and by NSF grant DMS-1855635.} \and Xiaoyu He\thanks{Department of Mathematics, Stanford University, Stanford,
CA 94305, USA. Email: {\tt alkjash@stanford.edu}. Research supported by a NSF GRFP grant number DGE-1656518.}}
\date{\today }

\begin{document}

\maketitle

\begin{abstract}
Let $P_G(k)$ be the number of proper $k$-colorings of a finite simple graph $G$. Tomescu's conjecture, which was recently solved by Fox, He, and Manners, states that $P_G(k) \le k!(k-1)^{n-k}$ for all connected graphs $G$ on $n$ vertices with chromatic number $k\geq 4$. In this paper, we study the same problem with the additional constraint that $G$ is $\ell$-connected. For $2$-connected graphs $G$, we prove a tight bound
\[
P_G(k) \le (k-1)!((k-1)^{n-k+1} + (-1)^{n-k}),
\]
and show that equality is only achieved if $G$ is a $k$-clique with an ear attached. For $\ell \ge 3$, we prove an asymptotically tight upper bound
\[
P_G(k) \le k!(k-1)^{n-\ell - k + 1} + O((k-2)^n), 
\]
and provide a matching lower bound construction. For the ranges $k \geq \ell$ or $\ell \geq (k-2)(k-1)+1$ we further find the unique graph maximizing $P_G(k)$. We also consider generalizing $\ell$-connected graphs to connected graphs with minimum degree $\delta$.
    
\end{abstract}

\section{Introduction}

Let $G=(V,E)$ be a simple, undirected graph. A {\it $k$-coloring} of $G$ is an assignment $c:V\rightarrow [k]$ of colors to the vertices of $G$ such that no adjacent vertices are assigned the same color. Let $\chi(G)$ denote the chromatic number of $G$, which is the smallest $k$ for which such a coloring exists, and write $P_G(k)$ for the number of $k$-colorings of $G$. Note that $P_G(k)$ known to be a polynomial and is called the {\it chromatic polynomial} of $G$. We say a graph $G$ is {\it $k$-chromatic} if $\chi(G)=k$.

A number of recent papers have considered the question of maximizing the value of $P_{G}(k)$, where $G$ ranges over some specified family of graphs.  Examples here include results for the families of $n$-vertex $m$-edge graphs \cite{LPS,MN}, $n$-vertex regular graphs \cite{GalvinTetali,SSSZhao}, $n$-vertex graphs with fixed minimum degree \cite{Engbers,GuggiariScott}, and $n$-vertex $2$-(edge-)connected graphs \cite{EngGal,To94}. 

Most of the remainder of this note will focus on the family of $n$-vertex $k$-chromatic graphs.  Tomescu \cite{ToBook} showed that the disjoint union of a $k$-clique and $n-k$ isolated vertices maximizes $P_{G}(k)$ in this family. When restricting to the family of {\em connected} $n$-vertex $k$-chromatic graphs, the question of maximizing $P_{G}(k)$ seemed to be much more difficult, with an extremal value conjectured in 1971 by Tomescu \cite{To}.  Recently Fox, He, and Manners \cite{FoHeMa} resolved this conjecture, computing the maximum number of $k$-colorings of a connected graph with $n$ vertices and chromatic number $k\ge 4$. Recall that the $\ell$-core of a graph $G$ is the (unique) maximal subgraph with minimum degree at least $\ell$.

\begin{theorem}[\cite{FoHeMa}]\label{thm-FoHeMa}
If $G$ is a connected graph on $n$ vertices and $\chi(G)=k\ge 4$, then
\[
P_G(k) \le k!(k-1)^{n-k},
\]
with equality if and only if the $2$-core of $G$ is a $k$-clique.
\end{theorem}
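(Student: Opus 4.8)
The plan is to prove Theorem~\ref{thm-FoHeMa} by induction on $n$, reducing the problem to a single self-contained inequality about critical graphs. \emph{First, a leaf reduction.} If $G$ has a vertex $v$ of degree $1$, then $G-v$ is connected, $\chi(G-v)=\chi(G)=k$ (a leaf is always recolourable and $k\ge 2$), and $P_G(k)=(k-1)P_{G-v}(k)$. The inductive hypothesis gives $P_{G-v}(k)\le k!(k-1)^{(n-1)-k}$, hence $P_G(k)\le k!(k-1)^{n-k}$, and since the $2$-core of $G$ equals that of $G-v$ the equality characterisation descends as well. So we may assume $\delta(G)\ge 2$, i.e.\ $G$ is its own $2$-core, and it suffices to prove $P_G(k)\le k!(k-1)^{n-k}$ with equality only when $G=K_k$.

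\emph{Second, passing to a critical subgraph.} Choose an edge-minimal subgraph $H\subseteq G$ with $\chi(H)=k$; then $H$ is $k$-critical, hence connected (in fact $2$-connected), with $\delta(H)\ge k-1$ and $h:=|V(H)|\ge k$. Enumerate $V(G)\setminus V(H)=\{v_1,\dots,v_{n-h}\}$ so that each $v_i$ has a neighbour in $V(H)\cup\{v_1,\dots,v_{i-1}\}$, possible since $G$ is connected. Peeling off $v_{n-h},v_{n-h-1},\dots$ one at a time, each removed vertex has a coloured neighbour and so at most $k-1$ available colours, which gives
\[
P_G(k)\ \le\ (k-1)^{\,n-h}\,P_H(k).
\]
Thus everything reduces to the following \emph{Critical Graph Lemma}: for $k\ge 4$, every $k$-critical graph $H$ on $h$ vertices satisfies $P_H(k)\le k!(k-1)^{h-k}$, with equality if and only if $H=K_k$. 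The hypothesis $k\ge 4$ is essential, since $C_5$ is $3$-critical and $P_{C_5}(3)=30>24=3!\cdot 2^{2}$. Granting the Lemma, the bound follows at once; and in the equality case $P_H(k)=k!(k-1)^{h-k}$ forces $H=K_k$, after which a short analysis of when the extension inequality is tight---using $\delta(G)\ge 2$ to rule out attaching any forest to $K_k$---forces $G=K_k$. Combined with the leaf reduction, this yields exactly the condition that the $2$-core of $G$ is a $k$-clique.

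\emph{Third, the Critical Graph Lemma---the main obstacle.} This is the crux, and where essentially all the difficulty lies. One would prove it by induction on $h$: the base case $h=k$ is $H=K_k$, and by the classical fact that a $k$-critical graph has either $k$ vertices or at least $k+2$, the next case is $h\ge k+2$. A $k$-critical graph with $h>k$ vertices contains no $K_k$ (deleting a vertex outside a copy of $K_k$ would leave a $k$-chromatic proper subgraph), so one must exploit the finer structure: $\delta(H)\ge k-1$; by Gallai's theorem the subgraph induced on the vertices of degree exactly $k-1$ has every block a clique or an odd cycle; and $H$ is $2$-connected. From such structure one tries to extract either a vertex $v$ whose neighbourhood contains a $j$-clique with $j\ge 2$, giving $P_H(k)\le (k-j)P_{H-v}(k)$, or a small separator permitting a product decomposition of the chromatic polynomial. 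The genuine obstacle is that deleting a vertex or contracting an edge destroys both $k$-criticality and the exact value of $\chi$, so the induction must be run over a more robust class---for instance all connected $k$-chromatic graphs carrying a bound on the number of vertices in excess of some critical subgraph---while still tracking the sharp constant $k!$ and the precise equality case; handling the Gallai-forest blocks that are odd cycles, where neighbourhoods contain no edge and the naive $(k-j)$-saving fails, is the delicate point that makes this bookkeeping hard.
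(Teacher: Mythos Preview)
This theorem is not proved in the present paper; it is quoted from \cite{FoHeMa} and used as a black box. So there is no ``paper's own proof'' to compare against directly, but the paper does reveal the architecture of the original argument (via Theorems~\ref{fox_he_general} and~\ref{fox_he_4crit} and the property~$C_k$), since it adapts that architecture to the $2$-connected case.

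Your reduction steps are fine, but they do not constitute a proof. Steps 1 and 2 are routine and correctly reduce the theorem to your Critical Graph Lemma. The problem is that the Critical Graph Lemma \emph{is} the theorem: a $k$-critical graph is already connected, so you have merely restricted the statement to the subclass where the hard cases live, and then not proved it. Your third section is an honest description of why a naive induction on $h$ stalls, not an argument that overcomes it. In particular, the Gallai-block analysis you sketch does not by itself produce the sharp constant $k!$; something extra is needed.

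What is missing is precisely the idea that drives \cite{FoHeMa}. A minimal counterexample is not only $k$-critical; it also satisfies property~$C_k$: for every nonadjacent pair $u,v$, a uniformly random $k$-colouring has $\Pr[c(u)=c(v)]<1/(k-1)$. The reason is that if this probability were at least $1/(k-1)$, then $P_{G/uv}(k)\ge P_G(k)/(k-1)$, and $G/uv$ would be a smaller counterexample. With both $k$-criticality and property~$C_k$ in hand, one can bound $P_G(k)$ by the explicit expressions in Theorem~\ref{fox_he_general} (and Theorem~\ref{fox_he_4crit} for $k=4$), which are strictly below $k!(k-1)^{n-k}$. Your outline never invokes $C_k$ or any contraction argument, and without it there is no mechanism to close the induction.
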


In this paper, we investigate the question of maximizing $P_G(k)$ under stronger connectivity constraints. Recall that a graph $G$ is $\ell$-connected if it has more than $\ell$ vertices and it remains connected upon the removal of any set of fewer than $\ell$ vertices. We extend the methods of Fox, He, and Manners to compute the exact maximum value of $P_G(k)$ over $2$-connected $k$-chromatic graphs on a fixed number of vertices.

Recall that an ear of a graph is a path where the two endpoints may coincide. Let $G_{n,k}$ be the unique graph of order $n$ obtained from a $k$-clique by adding an ear (with $n-k$ internal vertices, each of degree $2$) attached to two distinct vertices of the clique.

\begin{theorem}\label{thm:2-conn}
If $G$ is a $2$-connected graph on $n$ vertices and $\chi(G)=k\geq 4$, then
\[
P_G(k)\leq (k-1)!((k-1)^{n-k+1}+(-1)^{n-k}),
\]
with equality if and only if $G\cong G_{n,k}$.
\end{theorem}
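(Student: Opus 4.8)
The plan is to induct on $n=|V(G)|$, with the case analysis driven by whether $G$ has a vertex of degree $2$.

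\textbf{Extremal value and base case.} First one checks that $G_{n,k}$ attains the bound. Coloring the $k$-clique in $k!$ ways assigns distinct colors $a\ne b$ to the two vertices where the ear is attached; the number of ways to color the ear is then the number of proper $k$-colorings of a path on $n-k+2$ vertices (hence $n-k+1$ edges) whose endpoints are pre-colored $a$ and $b$, which equals the $(a,b)$-entry of $(J-I)^{n-k+1}$ for $J-I$ the adjacency matrix of $K_k$. Since $J-I$ has eigenvalue $k-1$ once and eigenvalue $-1$ with multiplicity $k-1$, that entry is $\tfrac1k\bigl((k-1)^{n-k+1}-(-1)^{n-k+1}\bigr)$, and multiplying by $k!$ gives exactly the right-hand side. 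The base case $n=k$ is immediate: the only $k$-vertex graph with chromatic number $k$ is $K_k=G_{k,k}$.

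\textbf{Case 1: $G$ has a vertex $v$ of degree $2$, with neighbours $a,b$.} Since $k-1\ge 3$, any $(k-1)$-coloring of $G-v$ would extend to $v$, so $\chi(G-v)=k$. If $ab\in E(G)$ then $P_G(k)=(k-2)P_{G-v}(k)$, and $G-v$ is again $2$-connected (a cut vertex of $G-v$ would be a cut vertex of $G$, using the edge $ab$) and $k$-chromatic, so the inductive hypothesis and a short computation give the bound. If $ab\notin E(G)$ one uses the deletion--contraction identity
\[
P_G(k)=(k-2)\,P_{(G-v)+ab}(k)+(k-1)\,P_{(G-v)/ab}(k),
\]
where $(G-v)+ab$ is again $2$-connected with $\chi\ge k$ (so its term is either $0$ or is bounded by induction), and $(G-v)/ab$ is connected with $\chi\ge k$ (a proper coloring of $(G-v)/ab$ pulls back to $G-v$ since $ab$ is a non-edge there). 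When $(G-v)/ab$ is $2$-connected, induction controls both terms, and one uses the identity $f(n,k)=(k-2)f(n-1,k)+(k-1)f(n-2,k)$ for the target $f(n,k)$; the remaining possibility is that $\{a,b\}$ is a $2$-cut of $G$ separating off pieces besides $\{v\}$, in which case $G$ is a gluing along $\{a,b\}$ of the path $avb$ with further $2$-connected pieces, and one bounds $P_G(k)$ by a product over the pieces. In every sub-case the terms sum to at most $f(n,k)$, with equality only for the small values of $n-k$ where it forces $G\cong G_{n,k}$.

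\textbf{Case 2: $\delta(G)\ge 3$.} This is the main obstacle; here equality cannot hold (as $G_{n,k}$ has degree-$2$ vertices when $n>k$), so only the strict inequality is needed. Split further. In Case 2a, some vertex $v$ has $\chi(G-v)=k$; expanding over colorings of $G-v$ gives $P_G(k)=k\,P_{G-v}(k)-\sum_c|c(N(v))|\le (k-2)\,P_{G-v}(k)+M$, where $M$ counts colorings of $G-v$ with $N(v)$ monochromatic. Since $|N(v)|=\deg(v)\ge 3$, $M\le P_H(k)$ for $H$ the connected graph on $n-\deg(v)\le n-3$ vertices got by identifying $N(v)$, and $\chi(H)\ge\chi(G-v)=k$; applying Theorem~\ref{thm-FoHeMa} to $G-v$ and to $H$ bounds $P_G(k)$ by $(k-2)k!(k-1)^{n-k-1}+k!(k-1)^{n-k-3}$, which lies below $f(n,k)$ since $\tfrac{(k-2)k}{(k-1)^3}<1$ (and in the few cases $n-k\le 2$, $H$ is too small to be $k$-chromatic, so $M=0$). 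In Case 2b, $\chi(G-v)=k-1$ for every $v$; then $G$ contains a \emph{spanning} $k$-critical subgraph $H$ (any minimal $k$-chromatic subgraph uses no vertex outside $V(G)$, else we would be in Case 2a), so $P_G(k)\le P_H(k)$, and one bounds $P_H(k)$ for $k$-critical $H$ on $n>k$ vertices using that such graphs are $2$-connected and, by the Kostochka--Yancey bound, have at least about $\tfrac{k}{2}n$ edges --- enough to push $P_H(k)$ well below $f(n,k)$ via a standard upper bound on the number of proper colorings in terms of the number of edges.

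\textbf{Equality.} Since equality never occurs in Case 2, an extremal $G$ has a degree-$2$ vertex, and tracing Case 1 backwards --- using that the inductive hypothesis is tight only for $G_{n-1,k}$ and $G_{n-2,k}$ --- forces $G$ to arise from $G_{n-1,k}$ by subdividing an edge of the ear, i.e.\ $G\cong G_{n,k}$. The principal difficulties are entirely in Case 2: making the Fox--He--Manners-type estimates in 2a sharp enough, and ruling out the vertex-critical graphs in 2b; the degenerate $2$-cut situation inside Case 1 also requires some care to keep the reductions within a class to which the induction applies.
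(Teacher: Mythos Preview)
Your overall architecture is different from the paper's --- the paper shows a minimal counterexample is $k$-critical \emph{and} satisfies property $C_k$, then invokes the quantitative bounds of Fox--He--Manners (Theorems~\ref{fox_he_general} and~\ref{fox_he_4crit}) for graphs with both properties --- but your Case~1 and Case~2a are reasonable and essentially correct. The genuine gap is Case~2b.

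In Case~2b you reduce to a spanning $k$-critical subgraph $H$ and then assert that the Kostochka--Yancey edge bound, together with ``a standard upper bound on the number of proper colorings in terms of the number of edges,'' forces $P_H(k)$ well below $f(n,k)$. There is no such standard bound that does the job. Edge-counting alone gives nothing sharper than the greedy bound $P_H(k)\le k(k-1)^{n-1}$ for connected graphs: the extra $\Theta(n)$ edges from Kostochka--Yancey do not translate into a multiplicative saving per edge, because the events ``edge $e$ is properly colored'' are far from independent. Bounding $P_H(k)$ for $k$-critical $H$ on $n>k$ vertices below $k!(k-1)^{n-k}$ is exactly the content of Tomescu's conjecture, and the entire point of the property-$C_k$ machinery in \cite{FoHeMa} is that criticality by itself is not enough --- one must also control pairwise color-coincidence probabilities to run the entropy/ordering argument. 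Your induction is on $n$, and $H$ has $n$ vertices, so the inductive hypothesis is unavailable; nor can you invoke Theorem~\ref{thm-FoHeMa}, since $H$ being $k$-critical and $2$-connected places you squarely in the regime where that theorem's equality case lives and where the present theorem is strictly stronger. In short, Case~2b is precisely the hard core of the problem, and you have not supplied an argument for it.

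A secondary issue: in Case~1, the sub-case where $\{a,b\}$ separates $G$ into more than two pieces is dismissed with ``bound $P_G(k)$ by a product over the pieces,'' but the pieces need not individually be $k$-chromatic, and the gluing over a non-edge $\{a,b\}$ does not give a clean product formula. The paper handles the analogous situation carefully via Lemma~\ref{CritProp}\ref{lobe} and a four-case analysis (Lemma~\ref{cont2conn}); something of comparable detail is needed here.
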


We also prove an asymptotically tight upper bound for $\ell$-connected graphs for all $\ell$ and provide examples of extremal graphs achieving the upper bound. 

\begin{theorem}\label{thm:l-conn}
Let $k\ge 4$, $\ell \ge 3$, and $n$ be sufficiently large in terms of $k$ and $\ell$. If $G$ is a $k$-chromatic $\ell$-connected graph on $n$ vertices, then
\[P_G(k)\leq k!(k-1)^{n-\ell -k +1}+O((k-2)^n).\]
Moreover, there exists a $k$-chromatic $\ell$-connected graph $G^*$ on $n$ vertices with $k\geq 4$ and $\ell \geq 3$ satisfying  $P_{G^*}(k)\geq k!(k-1)^{n- \ell -k +1}$.
\end{theorem}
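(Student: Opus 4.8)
I would take $G^*$ to be the graph on vertex set $\{u_1,\dots,u_k\}\cup\{a_1,\dots,a_{\ell-1}\}\cup\{b_1,\dots,b_{n-k-\ell+1}\}$ in which the $u_i$ form a $k$-clique, each $a_r$ is joined to $u_1,\dots,u_{k-1}$, and each $b_s$ is joined to $u_k$ and to all of $a_1,\dots,a_{\ell-1}$. Then $\chi(G^*)=k$: the clique gives $\chi\ge k$, while $u_i\mapsto i$, $a_r\mapsto k$, $b_s\mapsto 1$ is a proper $k$-coloring. The graph is $\ell$-connected because $H=\{u_k,a_1,\dots,a_{\ell-1}\}$ has $\ell$ vertices, so after deleting any $\ell-1$ vertices some $h\in H$ survives, and $h$ is adjacent to every surviving $b_s$ and to every surviving $u_i$ with $i\le k-1$; since $n$ is large a surviving $b_s$ also connects the remaining members of $H$ to $h$, so $G^*-S$ stays connected for every $S$ of size $\ell-1$. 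Finally, in a proper $k$-coloring the clique is colored in one of $k!$ ways, each $a_r$ then sees the $k-1$ colors of $u_1,\dots,u_{k-1}$ and is forced, and each (pairwise nonadjacent) $b_s$ then sees a single color on its neighborhood and has exactly $k-1$ choices, so $P_{G^*}(k)=k!(k-1)^{n-\ell-k+1}$.

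\textbf{Upper bound.} For the upper bound I would extend the inductive argument of Fox, He and Manners behind Theorem~\ref{thm-FoHeMa}. After the usual preprocessing (splitting at cut vertices using the block-multiplicativity of $P_G$, and invoking Theorem~\ref{thm-FoHeMa} on the resulting pieces), the main step is to control a vertex $v$ of minimum degree $\delta=\delta(G)\ge\ell$. Writing $P_G(k)=\sum_c\bigl(k-|c(N(v))|\bigr)$ with $c$ running over proper $k$-colorings of $G-v$, I would split the sum according to whether $N(v)$ is monochromatic under $c$. The monochromatic part is at most $(k-1)\,P_{(G-v)/N(v)}(k)$, and $(G-v)/N(v)$ has only $n-\delta\le n-\ell$ vertices, so applying induction (or Theorem~\ref{thm-FoHeMa}) to it bounds this part by $(k-1)\cdot k!(k-1)^{(n-\ell)-k}=k!(k-1)^{n-\ell-k+1}$, exactly the target main term; the non-monochromatic part contributes at most $k-2$ to the sum per coloring of $G-v$. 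It then remains to show that this non-monochromatic part is $O((k-2)^n)$, i.e. that an $\ell$-connected $k$-chromatic graph with many colorings in which $N(v)$ is \emph{not} monochromatic is structurally far from $G^*$; such graphs I would handle directly, for instance via the elementary inequality $P_G(k)\le\prod_i\bigl(k-\chi(G[\{v_j: j<i,\ v_j\sim v_i\}])\bigr)$ for a suitably chosen vertex ordering, which loses a factor close to $k-2$ for each of the many vertices whose already-colored back-neighbours are not monochromatic.

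\textbf{Main obstacle.} The real difficulty is that $\ell$-connectivity — and even the weaker condition $\delta(G)\ge\ell$ — is destroyed by the reductions (deleting $v$, contracting $N(v)$, splitting at a cut vertex), so the induction cannot be run inside the class of $\ell$-connected graphs. I would instead carry the inductive statement over the wider class of connected graphs in which all but a bounded number of vertices have degree at least $\ell$, keeping track of the ``light'' vertices created by each reduction, bounding their number, and showing that each contributes only a bounded multiplicative factor so that the accumulated $(k-2)$-type error never exceeds $O((k-2)^n)$ while the leading coefficient stays pinned at $k!/(k-1)^{\ell+k-1}$. Intertwined with this is the $k$-critical case, where $\chi(G-v)=k-1$ and so contracting $N(v)$ need not drop the chromatic number below $k$, meaning the factor $k-1$ above is not immediately available; this is precisely the sort of obstruction that Fox, He and Manners had to surmount for connected graphs, and I expect the $\ell$-connected refinement to require an analogous but more delicate analysis of the low-degree subgraph near $v$.
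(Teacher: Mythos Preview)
Your lower-bound construction is correct and, while organized slightly differently from the paper's $G_1$, gives exactly $P_{G^*}(k)=k!(k-1)^{n-\ell-k+1}$.

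The upper-bound sketch, however, has a real gap. The single-vertex reduction you propose does \emph{not} force the non-monochromatic part down to $O((k-2)^n)$. Concretely: for a minimum-degree vertex $v$, the colorings of $G-v$ in which $N(v)$ is not monochromatic contribute at most $(k-2)\,P_{G-v}(k)$ to $P_G(k)$, but $P_{G-v}(k)$ itself has leading term of order $(k-1)^{n-1}$, so the product is $\Theta((k-1)^{n-1})$, not $O((k-2)^n)$. Iterating the reduction does not help, because at each step the induction hypothesis only hands you a main term of order $(k-1)^{n'}$, and multiplying by a single factor $k-2$ never kills it. To get $O((k-2)^n)$ you would need \emph{linearly many} vertices whose back-neighbourhood is simultaneously non-monochromatic under a given coloring, and nothing in your inductive framework establishes this; it is a global structural fact about $G$, not something one vertex witnesses. (The auxiliary inequality you mention, with $\chi$ of the back-neighbourhood, is also not valid as stated: $\chi$ of a set of already-colored vertices does not lower-bound the number of colors actually appearing on that set.)

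The paper sidesteps this entirely by arguing structurally rather than inductively. Using Erd\H{o}s--P\'osa and a sequence of path-counting claims, it shows that any $(n,k,\delta)$-graph with $P_G(k)\ge k!(k-1)^{n-\delta-k+1}$ admits an $(X,Y,Z)$ decomposition with $|X|,|Y|=O_{\delta,k}(1)$ and $G[Y\cup Z]=K_{\delta,|Z|}$; Theorem~\ref{thm-FoHeMa} applied to the bounded-size piece $G[X\cup Y]/Y$ then pins the $2$-core to a $k$-clique. For $\ell$-connected graphs two of the four resulting types are immediately ruled out, and for the remaining types the colorings using one color on $Y$ give exactly the main term $k!(k-1)^{n-\ell-k+1}$ while colorings using two or more colors on $Y$ are at most $O((k-2)^{|Z|})=O((k-2)^n)$ because every vertex of $Z$ then has at most $k-2$ choices. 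The key point is that the $(X,Y,Z)$ decomposition supplies, in one stroke, the $n-O(1)$ vertices whose neighbourhood is non-monochromatic in every ``bad'' coloring --- precisely the global information your inductive scheme lacks.
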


In fact, our methods give an explicit structural description of all such $G^*$, see Theorem~\ref{thm:l-conn-stronger}, where we find the unique maximizing graph for some of the ranges of the parameters $k$ and $\ell$.

We also consider the related problem of bounding $P_G(k)$ when $G$ is connected and has minimum degree at least $\delta$. 

\begin{theorem}\label{thm-mindeg1}
Suppose $k-1\ge \delta \ge 3$ and $n$ is sufficiently large in terms of $\delta$ and $k$. Let $G^\star$ be obtained from a disjoint union of $K_{k}$ and $K_{\delta,n-k-\delta}$ by deleting an edge $e$ from $K_{\delta,n-k-\delta}$ and adding an edge from a vertex of $K_{k}$ to the endpoint of $e$ in the part of size $n-k-\delta$. Then $G^\star$ is the unique $n$-vertex $k$-chromatic minimum degree $\delta$ connected graph (up to isomorphism) with the maximum number of $k$-colorings. 
\end{theorem}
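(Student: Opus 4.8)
The plan is to adapt the method behind Theorem~\ref{thm:l-conn}, and ultimately the Fox--He--Manners machinery, to the minimum-degree hypothesis: first prove the matching asymptotic count together with a structural description of all near-optimal graphs, then pin down the unique optimizer by an exact comparison among finitely many candidates. Write $m=n-k-\delta$ and call a connected $k$-chromatic $n$-vertex graph with $\delta(G)=\delta$ \emph{feasible}; note $G^\star$ is feasible, and one first checks directly that $P_{G^\star}(k)=k!(k-1)^{m+1}+O((k-2)^n)$ by colouring the clique, then the size-$\delta$ side $S$ of its bipartite part, then the size-$m$ side (the sum over colourings of $S$ being dominated by the $k$ monochromatic ones).

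The first real step is the counting bound with stability: every feasible $G$ satisfies $P_G(k)\le k!(k-1)^{m+1}+O((k-2)^n)$, and if $P_G(k)$ is within $O((k-2)^n)$ of this value then $G$ contains a clique $K\cong K_k$, the remaining $n-k$ vertices split into an independent hub $S$ of size $\delta$ with no neighbour in $K$ and a set $L$ of size $m$ in which every vertex has exactly $\delta$ neighbours, all lying in $S$, and $L\cup S$ is attached to $K$ by essentially a single edge. The mechanism: once a colouring of $K\cup S$ is fixed, the remaining vertices (essentially the independent set $L$) extend it in $\prod_{w\in L}\bigl(k-|c(N(w))|\bigr)$ ways, each factor at most $k-1$; staying near the extremal value forces $|c(N(w))|=1$ for most $w$ and most colourings $c$ of $S$, that is, the neighbourhoods $N(w)$ must be forced-monochromatic, and since each such vertex has at least $\delta$ neighbours the cheapest way to monochromatize many of them simultaneously is to make them all equal to a single $\delta$-set --- the complete-bipartite picture above. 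The hypothesis $\delta\ge3$ is exactly what rules out the degenerate optimum of Theorem~\ref{thm-FoHeMa} (a tree hung off $K$) and forces the loss of the factor $(k-1)^{\delta-1}$ relative to that theorem; and $\omega(G)<k$ is impossible for a maximizer, the intuition being that the cheapest $k$-critical subgraph other than $K_k$, on at least $k+2$ vertices, costs more than the bipartite blob can recover.

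The stability statement then reduces an exact maximizer to one of finitely many explicit configurations --- a $k$-clique, a near-copy of $K_{\delta,m}$ obtained by deleting and/or adding $O(1)$ edges on $O(1)$ vertices, and an $O(1)$-size gadget joining the blob to the clique --- after which one computes $P_G(k)$ exactly for each and compares. All serious contenders share the leading term $k!(k-1)^{m+1}$, so the winner is decided by the next-order term, of size $\Theta((k-2)^n)$; the guiding principle is that among blob-type graphs one wants as few edges as possible subject to minimum degree $\delta$ and connectivity, which is achieved by deleting one edge $e=s_0u$ of $K_{\delta,m}$ and spending the unique connecting edge on restoring the degree of $u$ --- exactly $G^\star$ --- rather than, say, keeping $K_{\delta,m}$ intact and routing the connecting edge to its small side, or passing extra edges through a clique vertex to turn it into a hub. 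I expect this last comparison to be the main obstacle: since the closest competitors agree with $G^\star$ through the leading term, one must expand the chromatic polynomials precisely enough to compare coefficients at order $(k-2)^n$ and to confirm that the $n$-independent corrections do not overturn the comparison (this is where ``$n$ sufficiently large'' is used), and one must check that no small perturbation of either the bipartite blob or the attaching gadget does better. A secondary technical point is carrying out the stability step with the stated $O((k-2)^n)$ error, which requires the Fox--He--Manners estimates in suitably quantitative form.
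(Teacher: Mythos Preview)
Your overall plan---prove a structural stability lemma, then distinguish finitely many candidates by their $(k-2)^n$-order terms---is precisely what the paper does. However, the stability statement you write down is too narrow to be correct. You claim every near-optimal $G$ decomposes into a $K_k$, a hub $S$ of size $\delta$ with no neighbour in the clique, and a blob $L$ whose vertices see only $S$; but the paper's structural lemma (Lemma~\ref{lem-extremal}) produces \emph{four} distinct shapes all achieving the leading term $k!(k-1)^{n-\delta-k+1}$, and three of them violate your description. In Type~1 a vertex of the hub is complete to a $K_{k-1}$, so the hub meets the clique; in Type~3 the clique sends its single connecting edge into the hub rather than into the blob; and when $\delta=k-1$, Type~2 graphs have $\omega(G)=k-1$ and contain no $K_k$ at all---the chromatic number is forced by the $K_{k-1}$ together with the bipartite attachment, so your ``cheapest $k$-critical subgraph'' heuristic does not dispose of them. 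All of these must be carried through to the second-order comparison before they can be discarded, and indeed that is where the paper eliminates them.

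The paper's route to the stability lemma also differs from your monochromatizing sketch: it first uses the Erd\H{o}s--P\'osa theorem to bound the number of disjoint cycles and extract a bounded feedback vertex set, then analyzes the resulting forest (via Lemma~\ref{lem-paths}) to exhibit a copy of $K_{\delta, n-O(1)}$, and finally applies Theorem~\ref{thm-FoHeMa}---the \emph{qualitative} equality case of Tomescu for connected graphs, not a quantitative refinement---to the contracted graph $G[X\cup Y]/Y$ to show its $2$-core is exactly $K_k$. This is what yields the clean four-type classification; after that, the second-order computation (Lemma~\ref{lem:second-order}) is mechanical and singles out Type~4, which is $G^\star$.
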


We will also be able to give a structural description of the extremal graphs in the other range $\delta \geq k \geq 4$, and to compute the first, second, and third order terms of the maximum value of $P_G(k)$ in the course of the proofs.

The paper is organized as follows. The next section collects the main lemmas of Fox, He, and Manners \cite{FoHeMa} which we require, and then sketches how to combine them to produce upper bounds on the chromatic polynomial $P_G(k)$. Section~\ref{sec:2-conn} adapts this machinery to prove Theorem~\ref{thm:2-conn}, the analogue of Tomescu's conjecture for $2$-connected graphs. Then, in Section~\ref{sec:min-deg}, we study the general case of graphs with minimum degree $\delta\ge 3$. We show that there are basically four different types of possible candidate graphs with the maximum number of colorings and we prove asymptotic bounds for such graphs (Lemma~\ref{lem:second-order}) which delivers Theorem \ref{thm-mindeg1}. Also, it turns out that only two types of these candidates can be $\delta$-connected. Using this, we study the problem for $\ell$-connected graphs in Section~\ref{sec:ell-con} and prove Theorem~\ref{thm:l-conn}. Finally, we mention some related open problems and conjectures in the final section.

\section{Background}\label{sec:background}

In this section, we state two results from Fox, He, and Manners \cite{FoHeMa}, which will be used in Section~\ref{sec:2-conn} to prove Theorem~\ref{thm:2-conn}. Then, we sketch the proofs of the main theorems in Sections~\ref{sec:2-conn} and~\ref{sec:min-deg}.

Recall that a $k$-chromatic graph is called {\it $k$-edge critical}, or simply {\it $k$-critical}, if removing any edge reduces its chromatic number.

Also, we say that a $k$-chromatic graph $G$ has property $C_k$ if, for every pair of distinct vertices $u,v\in V(G)$,
\[
\Pr_{c}[c(u)=c(v)] < \frac{1}{k-1},
\]
where the probability is taken over a uniform random $k$-coloring $c$ of $G$.

The first result we need shows that if a graph is both $k$-critical and satisfies property $C_k$, it cannot have many $k$-colorings.

\begin{theorem}[\cite{FoHeMa}]\label{fox_he_general}
Let $G$ be a $k$-critical graph of order $n$ where $n> k \geq 4$ and suppose $G$ satisfies property $C_k$. Then, $n\geq 2k-1$ and 
$$P_G(k)\leq  \begin{cases} 
      k!\left(\frac{7k+5}{12}+\frac{n(k-2)}{12(n-k)}\right)^{n-k} & \text{if}  \ 2k-1\leq n \leq k^2-k \\
       k!\left(\frac{k+1}{2}+\frac{n(k-2)}{6(n-k)}\right)^{n-k} & \text{if} \  n>k^2-k.
      \end{cases}$$
\end{theorem}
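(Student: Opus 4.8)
The plan is to establish $n\ge 2k-1$ first, then to bound $P_G(k)$ by a vertex‑deletion argument whose single step is powered by property $C_k$, and finally to iterate and optimize.

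For $n\ge 2k-1$: I would invoke Gallai's structure theorem that a $k$-critical graph on at most $2k-2$ vertices is a join of two nonempty critical graphs. Peeling off complete join factors, a $k$-critical graph $G$ with $k<n\le 2k-2$ has the form $G=C\ast H$ with $C$ complete to $H$, where $H$ is critical and non-complete, $m:=\chi(H)\ge 3$, $|V(H)|>m$, and $\chi(C)=k-m\ge 1$ (so $m\le k-1$). In a uniform proper $k$-coloring $c$ of $G$ the palettes of $C$ and $H$ partition $[k]$ into sets of sizes $k-m$ and $m$, so $c|_{V(H)}$ is distributed as a uniform proper $m$-coloring of $H$ up to relabeling. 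Every proper $m$-coloring of $H$ uses all $m$ colors, so $\sum_i\binom{|c^{-1}(i)|}{2}\ge \tfrac{|V(H)|(|V(H)|-m)}{2m}$ by convexity, while $\delta(H)\ge m-1$ forces the number of non-adjacent pairs in $H$ to be at most $\tfrac{|V(H)|(|V(H)|-m)}{2}$; averaging produces a non-adjacent pair $u,v$ with $\Pr[c(u)=c(v)]\ge \tfrac{1}{m}\ge\tfrac{1}{k-1}$. A short case analysis — using Brooks' theorem to handle the equality case, which forces $H$ to be an odd cycle — upgrades this to a strict inequality, contradicting property $C_k$.

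The heart of the upper bound is the single-step inequality: for \emph{any} vertex $v$ of a graph $G$ satisfying $C_k$,
\[
P_G(k)\ \le\ \beta(v)\cdot P_{G-v}(k),\qquad \beta(v):=k-d(v)+\frac{1}{k-1}\binom{d(v)}{2}.
\]
To prove it, let $a_{c'}(v)=k-|c'(N(v))|$ be the number of colors available to $v$, so $P_G(k)=\sum_{c'\in\mathrm{col}(G-v)}a_{c'}(v)$. A double-counting identity gives $\sum_{c\in\mathrm{col}(G)}a_c(v)=\sum_{c'\in\mathrm{col}(G-v)}a_{c'}(v)^2$, hence by Cauchy--Schwarz $\mathbb{E}_{\mathrm{col}(G-v)}[a_{c'}(v)]\le \mathbb{E}_{\mathrm{col}(G)}[a_c(v)]$; and writing $a_c(v)=\sum_{j=1}^k \mathbf{1}[j\notin c(N(v))]$, applying a two-term Bonferroni bound to each event $\{j\notin c(N(v))\}$, summing over $j$, and using $\Pr[c(u)=c(u')]<\tfrac{1}{k-1}$ for non-adjacent $u,u'\in N(v)$ (property $C_k$) yields $\mathbb{E}_{\mathrm{col}(G)}[a_c(v)]\le \beta(v)$. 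Note $\beta(v)\le k-1$ always, $\beta(v)\le \tfrac{k}{2}$ when $d(v)=k-1$, and $\beta(v)$ is smaller still when $N(v)$ spans edges.

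With this in hand I would induct, deleting vertices down to a $k$-clique contributing the factor $k!$, using that $G$ is $k$-critical (so $\delta(G)\ge k-1$ and, by Gallai's tree theorem, the degree-$(k-1)$ vertices induce a forest whose blocks are cliques and odd cycles) to guarantee that most deletions cost only $\beta(v)\le \tfrac{k}{2}$; bounding the remaining factors by $k-1$ and optimizing how many ``large'' factors can occur, subject to $n\ge 2k-1$ and the edge-count inequalities for $k$-critical graphs, should produce a bound of the shape $k!\,\big(\text{weighted average base}\big)^{n-k}$ whose two regimes, and the crossover at $n=k^2-k$, come from this optimization. The main obstacle — and, I expect, the real work — is that property $C_k$ is \emph{not} inherited by $G-v$, so the single-step inequality cannot simply be iterated: making the induction go through requires choosing the deleted vertex via the Gallai-forest structure and, whenever the $C_k$-type control threatens to fail in the smaller graph, passing instead to a suitable smaller $k$-critical subgraph (or identifying a pair of non-adjacent vertices) and bounding its colorings separately. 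Carrying out this bookkeeping while tracking the lower-order terms precisely enough to obtain the stated corrections $\tfrac{n(k-2)}{12(n-k)}$ and $\tfrac{n(k-2)}{6(n-k)}$ is the technical crux.
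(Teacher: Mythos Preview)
The paper does not prove this theorem itself; it simply records that the argument is identical to that of Lemma~8 and Theorem~10 in \cite{FoHeMa}, with ``$k$-critical'' playing the role that ``bad'' plays there. So the comparison is really with the Fox--He--Manners argument.

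Your single-step inequality
\[
\frac{P_G(k)}{P_{G-v}(k)} \;=\; \mathbb{E}_{c'\in\mathrm{col}(G-v)}[a_{c'}(v)] \;\le\; \mathbb{E}_{c\in\mathrm{col}(G)}[a_c(v)] \;\le\; k-d(v)+\frac{1}{k-1}\binom{d(v)}{2}
\]
is correct and is exactly the computation at the heart of \cite{FoHeMa}. The gap is the one you already flag: once you pass to $G-v$, property $C_k$ is gone, and your proposed repair (``pass to a smaller $k$-critical subgraph or identify a pair'') does not obviously close it. Identifying a pair in $G-v$ need not produce a $k$-critical graph, and there is no reason the resulting graph satisfies $C_k$ either, so the scheme does not terminate in any controlled way; the precise corrections $\tfrac{n(k-2)}{12(n-k)}$ and $\tfrac{n(k-2)}{6(n-k)}$ cannot fall out of such an uncontrolled recursion.

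The actual argument in \cite{FoHeMa} sidesteps the inheritance problem entirely by replacing iterated deletion with the entropy chain rule. Fix any ordering $v_1,\ldots,v_n$ and let $N^-(v_i)$ be the earlier neighbours of $v_i$. For a uniform $k$-coloring $c$ of the \emph{original} graph $G$,
\[
\log P_G(k)=H(c)=\sum_i H\big(c(v_i)\mid c(v_1),\ldots,c(v_{i-1})\big)\le \sum_i \log \mathbb{E}_c\big[k-|c(N^-(v_i))|\big],
\]
and now every expectation is over $\mathrm{col}(G)$, so your Bonferroni\,+\,$C_k$ bound applies at every step simultaneously, giving $P_G(k)\le \prod_i\big(k-d^-(v_i)+\tfrac{1}{k-1}\binom{d^-(v_i)}{2}\big)$. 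The stated bases then come from choosing a good ordering and optimizing this product subject to $\sum_i d^-(v_i)=|E(G)|$, together with the Dirac/Gallai/Kostochka--Yancey edge bounds for $k$-critical graphs; the two regimes and the crossover at $n=k^2-k$ arise from which edge bound is binding. Your $n\ge 2k-1$ argument via Gallai's decomposition is fine and in the same spirit as \cite{FoHeMa}.
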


Theorem~\ref{fox_he_general} is proved exactly the same way as \cite[Lemma 8 and Theorem 10]{FoHeMa}; here we note that $G$ being $k$-critical is the essential property of a bad graph utilized in those proofs. The second result we need is a stronger bound that holds only for the case $k=4$.

\begin{theorem}[\cite{FoHeMa}]\label{fox_he_4crit}
Let $G$ be a $4$-critical graph of order $n\geq 6$ satisfying property $C_4$. Then,
\[
P_G(4)<4!\,2^{(11n-54)/12}\,3^{(2n-3)/6}.
\]
\end{theorem}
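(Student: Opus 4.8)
The plan is to revisit the argument that establishes Theorem~\ref{fox_he_general} in the case $k=4$ and track the estimates more carefully, exploiting the special combinatorial features available when colors come from a $4$-element set. Recall that the proof of Theorem~\ref{fox_he_general} proceeds (as in \cite[Lemma~8 and Theorem~10]{FoHeMa}) by fixing a reference proper $k$-coloring, ordering the non-clique vertices, and bounding the number of colorings by a product $\prod_v a_v$, where $a_v$ counts the number of admissible colors for $v$ given the colors already chosen for earlier vertices; property $C_k$ together with $k$-criticality is used to show that on average $a_v$ is appreciably smaller than $k-1$. For $k=4$ the aim is to improve the ``average'' bound so that instead of $a_v \approx \frac{7\cdot 4+5}{12}+\cdots = \frac{33}{12}+\cdots$ one gets a weighted geometric mean of the form $2^{11/12}3^{1/3}$ (up to the lower-order corrections accounted for by the $-54$ and $-3$ in the exponents), which is strictly smaller.

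First I would recall the partition of the non-clique vertices used in \cite{FoHeMa}: vertices are classified by how many distinct colors appear on their already-colored neighbors, and $k$-criticality forces enough vertices into the ``high-degree'' classes. For $k=4$, a vertex $v$ contributes a factor of at most $3$ if its colored neighbors use only one color, at most $2$ if they use two colors, and at most $1$ if they use three colors; the entire problem is to lower bound how many vertices fall into the $2$-color and $3$-color buckets. The key step is to use property $C_4$ to control the distribution of ``monochromatic'' pairs: since $\Pr_c[c(u)=c(v)] < 1/3$ for every pair, a counting/convexity argument (the same second-moment-type estimate as in \cite{FoHeMa}, now with $k=4$ plugged in) shows that a $4$-critical graph cannot have too many vertices whose early neighborhoods collapse to a single color, and this is where the constants $11/12$ and $1/3$ in the exponents come from. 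I would then assemble these bounds into the product estimate and simplify to the stated closed form, being careful about the additive constants $-54$ and $-3$, which arise from the $k$-clique contributing $k! = 24$ exactly and from boundary terms for the first few non-clique vertices (this is why the hypothesis $n \geq 6$ appears).

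The main obstacle I anticipate is quantitative rather than structural: one must check that the refined bucket counts, when combined, actually beat the generic bound of Theorem~\ref{fox_he_general} for \emph{all} $n \geq 6$ and not merely asymptotically --- the small cases $n = 6, 7, 8$ are delicate because the additive constants in the exponent can swamp the gain from the improved geometric mean, so these may need to be verified directly (either by hand or by noting that a $4$-critical graph on so few vertices is one of a short explicit list, e.g.\ $K_4$ is excluded since $n > 4$, and the next $4$-critical graphs are $W_5$ and the like). A secondary technical point is ensuring the inequality is \emph{strict}, which should follow because equality in the convexity step would force every monochromatic-pair probability to equal exactly $1/3$, contradicting the strict inequality in property $C_4$. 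Since Theorem~\ref{fox_he_4crit} is quoted from \cite{FoHeMa}, I would ultimately defer the full computation to that reference and present here only the reduction showing that the $k=4$ specialization of the method yields this sharper exponent.
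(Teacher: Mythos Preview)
The paper does not give a standalone proof of this theorem; it simply records that it ``is proved the same way as \cite[Lemma 11]{FoHeMa}.'' Your proposal does the same thing in the end --- you explicitly defer the full computation to \cite{FoHeMa} --- so the approaches coincide.

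One small correction to your sketch: you attribute the factor $4! = 24$ to ``the $k$-clique contributing $k!$ exactly,'' but a $4$-critical graph on $n > 4$ vertices contains no $K_4$ (a $k$-critical graph cannot properly contain a $k$-clique). The $k!$ in these bounds comes from the color-permutation symmetry of $P_G(k)$, not from a clique subgraph, so the additive constants in the exponents have a different provenance than you suggest. This does not affect the overall strategy, which is correctly identified as the $k=4$ specialization of the argument behind Theorem~\ref{fox_he_general}.
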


Theorem~\ref{fox_he_4crit} is proved the same way as \cite[Lemma 11]{FoHeMa}.

The proof of Theorem~\ref{thm:2-conn} relies on showing that a minimal counterexample graph $G$ must be both $k$-critical and satisfy property $C_k$. Roughly speaking, the graph $G$ will be $k$-critical because otherwise we can find a counterexample with fewer edges by removing an edge, and $G$ will satisfy property $C_k$ because otherwise we can find a counterexample with fewer vertices by identifying two vertices together. These arguments resemble those of \cite{FoHeMa} but are complicated by the fact that everything must respect the $2$-connectivity of $G$.

On the other hand, the proof of Theorem~\ref{thm:l-conn} is of a completely different flavor; in general, asymptotic results of this form are easier to prove than exact results like Theorem~\ref{thm:2-conn} since the hardest cases tend to occur when $n=\Theta(k)$. 

When $n$ is much larger than $k$, the structure of $G$ is much more tightly confined. In fact, we will prove that if a graph $G$ of order $n$ much larger than $k$ with minimum degree $\delta$ has many $k$-colorings, then $G$ contains a bipartite subgraph $K_{\delta, n-C}$ where $C>0$ depends only on $\delta$ and $k$. This will allow us to determine the approximate structure of every extremal graph $G$.
\section{$2$-connected Graphs}\label{sec:2-conn}
In this section, we prove Theorem~\ref{thm:2-conn}.

We say that $G$ is an {\it $(n,k)$-bad graph}  if it is a minimal counterexample for Theorem~\ref{thm:2-conn} with $n$ vertices and chromatic number $k$. 

In particular, suppose that Theorem \ref{thm:2-conn} is false for some $k \geq 4$, and fix this value of $k$.  Let $n>k$ be the minimum number of vertices in a counterexample $G$ to Theorem \ref{thm:2-conn} for this value of $k$, and we also assume that no proper subgraph of $G$ is a counterexample.  So an $(n,k)$-bad graph is an $n$-vertex $k$-chromatic $2$-connected graph $G$ so that $G$ either satisfies
\[
P_G(k) > (k-1)!\left((k-1)^{n-k+1}+(-1)^{n-k}\right),
\]
or satisfies
\[
P_G(k) = (k-1)!\left((k-1)^{n-k+1}+(-1)^{n-k}\right)
\]
and $G$ is not isomorphic to $G_{n,k}$, $n$ is the minimal such value for this fixed $k$, and every proper subgraph of $G$ satisfies Theorem \ref{thm:2-conn}. 

For the rest of this section, let 
\[
P_n(k) = P_{G_{n,k}}(k) = (k-1)!((k-1)^{n-k+1}+(-1)^{n-k}).
\]

Our first goal is to prove that every $(n,k)$-bad graph is $k$-critical. We start with an old lemma which bounds colorings of $2$-connected graphs.

\begin{lemma}[\cite{EngGal,To94}]\label{2connLem} If $G$ is a $2$-connected graph on $n$ vertices and $k$ is an integer with $k\geq 3$,  then 
\[
P_G(k)\leq (k-1)^{n}+(-1)^{n}(k-1).
\]
Moreover, for $n\neq 5$ or $k \neq 3$,  equality holds if and only if $G$ is the cycle $C_n$.
\end{lemma}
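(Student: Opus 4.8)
The plan is to prove the upper bound $P_G(k) \le (k-1)^n + (-1)^n(k-1)$ by induction on the number of edges of $G$, using the standard deletion--contraction recursion together with ear decompositions of $2$-connected graphs. Recall that every $2$-connected graph on $n \ge 3$ vertices has an \emph{open ear decomposition}: it can be built starting from a cycle by successively attaching internally disjoint ears (paths whose endpoints lie in the current graph but whose interior vertices are new). The base case is therefore the cycle $C_n$, for which $P_{C_n}(k) = (k-1)^n + (-1)^n(k-1)$ exactly (a classical computation via the transfer matrix, or via deletion--contraction on a single edge).

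For the inductive step I would take a $2$-connected graph $G$ that is \emph{not} a cycle and consider its last ear in an open ear decomposition. If that ear has an interior vertex, then $G$ contains a vertex $v$ of degree $2$; contracting one of the two edges at $v$ (equivalently, suppressing $v$) yields a $2$-connected multigraph on $n-1$ vertices, and one checks $P_G(k) = (k-2)\cdot(\text{something})$ — more cleanly, one uses that for a path of length $2$ through a degree-$2$ vertex $v$ with neighbors $x,y$, $P_G(k) = (k-2)P_{G'}(k) + P_{G''}(k)$ where $G'$ is $G$ with $v$ deleted (and no edge $xy$ added) and $G''$ is $G$ with $v$ deleted and $xy$ identified; both $G', G''$ remain $2$-connected (or are handled directly), and the inductive bound on each propagates. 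If instead the last ear is a single edge $e = xy$ (so $G$ is obtained from a smaller $2$-connected graph $G - e$ by adding one edge), apply deletion--contraction $P_G(k) = P_{G-e}(k) - P_{G/e}(k)$: here $G - e$ is still $2$-connected on $n$ vertices, so by induction $P_{G-e}(k) \le (k-1)^n + (-1)^n(k-1)$, and one must check $P_{G/e}(k) \ge 0$, which is automatic since it is a chromatic polynomial evaluated at a positive integer $k$. This already gives the bound; the slight subtlety is ensuring the structures produced stay inside the inductive hypothesis, which is exactly what the ear decomposition guarantees.

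For the equality characterization, I would track when each inequality above is tight. In the deletion--contraction case, equality forces $P_{G/e}(k) = 0$, i.e. $\chi(G/e) > k$; combined with $G-e = C_n$ (from the equality case of the inductive hypothesis) this pins down $G$ very rigidly and one argues no such $G$ other than $C_n$ itself arises, except for the documented exception $n = 5$, $k = 3$ (where $C_5$ and other graphs can coincide in value — indeed $P_{C_5}(3) = 30$ and one checks which $2$-connected graphs on $5$ vertices also attain $30$). In the degree-$2$ case one similarly forces the contracted graph to be extremal and traces back. The main obstacle is this bookkeeping in the equality analysis — handling the low-order exceptional case and making sure that tightness genuinely forces $G \cong C_n$ — rather than the upper bound itself, which follows routinely from ear decomposition plus deletion--contraction. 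Since this lemma is quoted from \cite{EngGal,To94}, I would in practice cite it directly and only sketch the ear-decomposition induction as above.
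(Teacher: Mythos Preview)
The paper does not prove Lemma~\ref{2connLem}; it is quoted from \cite{EngGal,To94} as a known result, and you correctly identify this at the end of your proposal. So there is nothing to compare against, and citing the lemma is exactly what the paper does.

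That said, your sketch of an independent proof has a genuine gap in the degree-$2$ case. You assert that for a degree-$2$ vertex $v$ with neighbors $x,y$, the graphs $G' = G - v$ and $G'' = (G-v)/xy$ ``remain $2$-connected (or are handled directly).'' This fails in general: take $K_4$ on $\{1,2,3,4\}$ and replace the edge $24$ by a path $2\text{--}5\text{--}6\text{--}4$. The resulting graph $G$ is $2$-connected and not a cycle, vertex $5$ has degree~$2$, but $G - 5$ leaves vertex~$6$ with degree~$1$, so $G-5$ is not $2$-connected. Since the cycle bound $(k-1)^{m} + (-1)^m(k-1)$ is \emph{false} for connected graphs that are not $2$-connected (e.g.\ paths), your inductive hypothesis does not apply to $G'$ and the recursion $P_G(k) = (k-2)P_{G'}(k) + P_{G''}(k)$ does not propagate the bound.

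The standard fix is to \emph{suppress} $v$ rather than delete it: remove $v$ and add the edge $xy$ (obtaining a $2$-connected multigraph on $n-1$ vertices, possibly with a double edge), and run the induction on that object, keeping track of multiplicities. Alternatively, one can remove the entire interior of the last ear at once and compare path-extension counts directly, which is closer to how \cite{To94} proceeds. Your single-edge-ear case (deletion--contraction on the chord) is fine as written; it is only the interior-vertex case that needs repair.
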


Next we prove a simple result about the equality case graph $G_{n,k}$. We use the standard notation $G + uv$ or $G+e$ for adding a single edge $e$ between nonadjacent vertices $u,v$ of $G$, and $G-uv$ or $G-e$ for the deletion of the edge $e$. We also write $G / uv$ for the graph obtained by contracting two vertices $u,v$ of $G$ together.

\begin{lemma}\label{kclique_ear_sub} Let $k \geq 3$, and let $G= G_{n,k}$. If $u$ and $v$ are two nonadjacent vertices of $G$, then $P_{G+uv}(k)<P_n(k).$
\end{lemma}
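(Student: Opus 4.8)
The plan is to reduce the inequality to a statement about chromatic numbers via deletion--contraction, and then verify that statement by a short degeneracy argument. We may assume $n>k$, since otherwise $G_{n,k}=K_k$ has no nonadjacent pair and there is nothing to prove. For any graph $H$ and any edge $e=uv$ of $H$, deletion--contraction gives $P_H(k)=P_{H-e}(k)-P_{H/uv}(k)$, where $H/uv$ is the (simple) graph obtained by identifying $u$ and $v$; multi-edges are irrelevant for proper colorings. Applying this with $H=G_{n,k}+uv$, so that $H-e=G_{n,k}$ and $H/uv=G_{n,k}/uv$ (since $u$ and $v$ are nonadjacent in $G_{n,k}$, contracting introduces no loop), yields
\[
P_{G_{n,k}+uv}(k)=P_{G_{n,k}}(k)-P_{G_{n,k}/uv}(k)=P_n(k)-P_{G_{n,k}/uv}(k).
\]
Thus it suffices to show $P_{G_{n,k}/uv}(k)>0$; equivalently, that $\chi(G_{n,k}/uv)\le k$, i.e. that $G_{n,k}$ admits a proper $k$-coloring in which $u$ and $v$ receive the same color.

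To see $\chi(G_{n,k}/uv)\le k$ I would show that $G_{n,k}/uv$ is $(k-1)$-degenerate, so that greedy coloring along a suitable vertex order uses at most $k$ colors. Write the ear of $G_{n,k}$ as a path $w,x_1,\dots,x_{n-k},w'$ joining distinct clique vertices $w,w'$; each internal ear vertex $x_i$ then has degree exactly $2$ in $G_{n,k}$. Since $u,v$ are nonadjacent they are not both clique vertices, so at least one of them, say $u$, equals some $x_a$. If $v$ is also an internal ear vertex $x_b$, then $G_{n,k}/uv$ is the $k$-clique together with the unmerged internal ear vertices and the single vertex $y$ onto which $x_a,x_b$ are identified; the only clique-neighbors of $y$ lie in $\{w,w'\}$ ($y$ is adjacent to $w$ only if $1\in\{a,b\}$ and to $w'$ only if $n-k\in\{a,b\}$). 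If instead $v$ is a clique vertex, the image of $\{x_a,v\}$ is adjacent to every other clique vertex, so $G_{n,k}/uv$ is a $k$-clique together with the unmerged internal ear vertices. In either case, order the vertices as: all unmerged internal ear vertices first, then $y$ (in the first case), then the $k$ clique vertices. Every unmerged internal ear vertex has total degree at most $2\le k-1$; the vertex $y$ has at most $2\le k-1$ neighbors later in the order; and each clique vertex has at most $k-1$ later neighbors, since only clique vertices come after it. Hence $G_{n,k}/uv$ is $(k-1)$-degenerate (this is where $k\ge 3$ is used), so $\chi(G_{n,k}/uv)\le k$, giving $P_{G_{n,k}/uv}(k)>0$ and therefore $P_{G_{n,k}+uv}(k)<P_n(k)$.

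I do not expect a real obstacle here: the substance of the lemma is precisely the deletion--contraction reduction together with the degeneracy observation. The only care required is routine bookkeeping---keeping track of which of $w,w'$ (if either) remains a neighbor of the merged vertex in the first case, and checking that degenerate or coincident configurations (for instance $n=k+1$, or $u=x_a,v=x_b$ with $b=a+2$, which produces a multi-edge at the common neighbor) cause no trouble once one works with the underlying simple graph. An alternative would be to build the desired coloring of $G_{n,k}$ by hand, coloring the clique with all $k$ colors and extending greedily along the ear while forcing $c(u)=c(v)$; this also goes through for $k\ge 3$ but needs a slightly more delicate analysis near the two identified vertices, so I prefer the degeneracy version.
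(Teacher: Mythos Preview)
Your proof is correct and follows essentially the same approach as the paper: both use deletion--contraction to reduce to showing $P_{G/uv}(k)>0$, i.e.\ $\chi(G/uv)\le k$. The paper simply asserts that $\chi(G/uv)=k$ is ``easy to see since $k\ge 3$,'' whereas you supply an explicit $(k-1)$-degeneracy ordering to justify this step; your extra detail is sound but not a genuinely different route.
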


\begin{proof}
It is easy to see that $\chi(G/uv)=k$ since $k\geq 3$. So, $P_{G/uv}(k)>0$ and we have 
\[
P_{G+uv}(k)=P_G(k)-P_{G/uv}(k)<P_G(k)=P_n(k).
\]\end{proof}

We are now ready to show the first main lemma.

\begin{lemma}\label{criticalLemma} Let $n\geq k\geq 4$. If $G$ is an $(n,k)$-bad graph, then $G$ is $k$-critical. 
\end{lemma}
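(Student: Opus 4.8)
The plan is to argue by contradiction: suppose $G$ is an $(n,k)$-bad graph that is not $k$-critical, so there is an edge $e=xy$ with $\chi(G-e)=k$. The natural move is to delete $e$ and use minimality, but $G-e$ may fail to be $2$-connected, so we cannot directly apply the $(n,k)$-bad hypothesis to it. Instead I would consider the block-cut-tree structure of $G-e$. Since $G$ is $2$-connected, $G-e$ is connected, and its blocks form a path $B_1, B_2, \dots, B_t$ between $x$ and $y$ (with $x \in B_1$, $y \in B_t$, consecutive blocks sharing a cut vertex). By the multiplicativity of the chromatic polynomial over blocks, $P_{G-e}(k) = \prod_{i} P_{B_i}(k) / k^{t-1}$ (dividing by the appropriate power of $k$ for the shared cut vertices). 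At least one block, say $B_j$, is $k$-chromatic, and each $B_i$ is $2$-connected (or a single edge, which I would handle as a degenerate case); set $n_i = |V(B_i)|$ so that $\sum n_i = n + (t-1)$.

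The key step is to bound each factor. For the $k$-chromatic blocks $B_i$ we want to apply Theorem~\ref{thm:2-conn} itself (valid by minimality of $n$ if $n_i < n$, and hence $P_{B_i}(k) \le P_{n_i}(k)$), and for blocks that are not $k$-chromatic (hence at most $(k-1)$-chromatic) we apply Lemma~\ref{2connLem} with $k-1$ colors... no: we still $k$-color them, so we just need $P_{B_i}(k) \le (k-1)^{n_i} + (-1)^{n_i}(k-1)$ from Lemma~\ref{2connLem}, valid for every $2$-connected block. Combining, $P_{G}(k) \le P_{G-e}(k)$ is false in general, so actually I need the reverse: since $e$ is an edge, $P_G(k) = P_{G-e}(k) - P_{G/e}(k) < P_{G-e}(k)$, so it suffices to show $P_{G-e}(k) \le P_n(k)$, which would contradict $G$ being bad (it would force $P_G(k) < P_n(k)$). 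So the heart of the argument is the purely numerical inequality
\[
\prod_{i=1}^{t} \bigl( (k-1)^{n_i} + (-1)^{n_i}(k-1) \bigr) \Big/ k^{t-1} \ \le\ P_n(k) = (k-1)!\bigl((k-1)^{n-k+1} + (-1)^{n-k}\bigr),
\]
with the improvement that one factor can be replaced by $P_{n_j}(k)$ when that block is $k$-chromatic. One expects the product to be maximized when $t$ is as small as possible and the $k$-chromatic block is a $k$-clique, and I would verify that the resulting bound is strictly below $P_n(k)$, handling separately the small cases ($t=1$, meaning $G-e$ is already $2$-connected, which is the cleanest: then $G-e$ is $k$-chromatic $2$-connected on $n$ vertices, so by minimality $P_{G-e}(k) \le P_n(k)$ unless $G-e \cong G_{n,k}$, and in that last case Lemma~\ref{kclique_ear_sub} finishes since $G = (G-e)+e$; the case $G-e$ not $k$-chromatic requires Lemma~\ref{2connLem}).

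The main obstacle is the bookkeeping when $G-e$ genuinely disconnects into several blocks: one must check that the product bound, after dividing by $k^{t-1}$ and optimizing over the partition $n_1 + \cdots + n_t = n + t - 1$ and over which blocks are $k$-chromatic, never exceeds $P_n(k)$, and that equality forces $G \cong G_{n,k}$ (which should come down to $t=2$, one block a $k$-clique and the other a single edge, giving exactly a $k$-clique with an ear — i.e. $G_{n,k}$ — contradicting badness via Lemma~\ref{kclique_ear_sub}, or $t=1$ as above). I expect this to reduce, after taking logarithms, to a convexity/monotonicity argument showing that merging two blocks or replacing a non-clique $k$-chromatic block by a clique only increases the bound, so that the extremal configuration is forced. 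The parenthetical $(-1)^{n_i}$ correction terms are lower-order and can be absorbed crudely for $n$ large, but one must be slightly careful for the smallest blocks (e.g.\ triangles, $n_i=3$); I would dispatch these by direct computation.
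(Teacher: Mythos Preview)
Your overall architecture---delete a non-critical edge $e$, decompose $G-e$ into blocks, use the product formula, bound each factor---matches the paper's. But the central reduction ``it suffices to show $P_{G-e}(k)\le P_n(k)$'' is not actually true, and this is where the proposal breaks down.

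Take the two-block case with $G_1=K_k$ (so $n_1=k$, $P_{G_1}(k)=k!$) and $G_2=C_{n-k+1}$. Then
\[
P_{G-e}(k)=\frac{k!\,\bigl((k-1)^{n-k+1}+(-1)^{n-k+1}(k-1)\bigr)}{k}=(k-1)!\bigl((k-1)^{n-k+1}+(-1)^{n-k+1}(k-1)\bigr),
\]
and when $n-k$ is odd this equals $(k-1)!\bigl((k-1)^{n-k+1}+(k-1)\bigr)$, which \emph{exceeds} $P_n(k)=(k-1)!\bigl((k-1)^{n-k+1}-1\bigr)$. So your displayed product inequality is simply false at the extremal configuration you were planning to reduce to, and no amount of convexity bookkeeping will repair it. The same parity obstruction arises in the case $n_2=2$ with $G_1\cong G_{n-1,k}$: there $P_{G-e}(k)=(k-1)P_{n-1}(k)$ again overshoots $P_n(k)$ when $n-k$ is odd.

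The paper gets around this by \emph{not} bounding $P_{G-e}(k)$ in these boundary cases, but rather bounding $P_G(k)$ directly. In the $K_k$-block case (its Case~3) it rewrites $G$ as a $k$-clique glued along an \emph{edge} to a $2$-connected graph $G_2'$ on $n-k+2$ vertices; since $G_2'$ cannot be a cycle, Lemma~\ref{2connLem} gives a strict inequality and the extra factor $k(k-1)$ in the denominator exactly absorbs the parity loss. In the $n_2=2$ case (its Case~1) it computes $P_G(k)$ explicitly from the structure of $G_1\cong G_{n-1,k}$. Your plan needs these direct computations for $P_G(k)$; the $P_{G-e}(k)$ route alone cannot close the argument.
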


\begin{proof}
Suppose $G$ is not $k$-critical.  Let $e$ be an edge so that $\chi(G-e)=k$. First let us show that $G-e$ is not $2$-connected, so to that end suppose that $G-e$ is $2$-connected. Since no proper subgraph of $G$ is a counterexample for Theorem~\ref{thm:2-conn}, we have $P_{G-e}(k)\leq P_n(k)$. Now every proper $k$-coloring of $G$ is a proper $k$-coloring of $G-e$, so $P_G(k)\leq P_{G-e}(k)$. Since $G$ is a counterexample it follows that $P_G(k)= P_{G-e}(k)= P_n(k)$. As $G-e$ satisfies Theorem~\ref{thm:2-conn}, $G-e\cong G_{n,k}$. Hence, $G$ contains $G_{n,k}$ as a proper subgraph and so $P_G(k)<P_n(k)$ by Lemma~\ref{kclique_ear_sub}, which is a contradiction. Thus, $G-e$ is not $2$-connected.

Since $G-e$ is not $2$-connected and $G$ is $2$-connected, $G-e$ has exactly one cut-vertex $v$ and exactly two blocks, say $G_1$ and $G_2$. Let $n_1=|V(G_1)|$ and $n_2=|V(G_2)|$, and without loss of generality assume $G_1$ is $k$-chromatic. Note that we must have $n_1 \geq k$ and $n_2 \geq 2$.  By the minimality of $G$ we have $P_{G_1}(k)\leq P_{n_1}(k)$, and since $G_2$ is $2$-connected we have $P_{G_2}(k)\leq (k-1)^{n_2}+(-1)^{n_2}(k-1)$ by Lemma \ref{2connLem}. Therefore
$$P_{G-e}(k)=\frac{P_{G_1}(k)P_{G_2}(k)}{k}\leq \frac{P_{n_1}(k)\left((k-1)^{n_2}+(-1)^{n_2}(k-1)\right)}{k}.$$
Now we consider three cases.

\medskip

\textbf{Case 1:} Suppose that $n_2=2$.  In this case, the graph $G_2$ contains a vertex $w \neq v$ with the edge $e$ given by $ww_1$ for some $w_1$ in $G_1$. Recall that $P_{G_1}(k) \leq (k-1)!((k-1)^{n-k} + (-1)^{n-1-k})$, with equality if and only if $G\cong G_{n,k}$. 

\textbf{Subcase 1a:} Suppose $P_{G_1}(k)<(k-1)!((k-1)^{n-k}+(-1)^{n-1-k})$. Since $P_{G_1}(k)$ is divisible by $k!$ (by the symmetry of $k$-colorings), we have 
$$P_{G_1}(k) \leq (k-1)!((k-1)^{n-k}+(-1)^{n-1-k})-k! = (k-1)!((k-1)^{n-k}+(-1)^{n-1-k}-k).$$ 
This implies that 
\[
P_{G-e}(k) \leq (k-1)!((k-1)^{n-k} + (-1)^{n-1-k}-k)(k-1).
\]
But $P_n(k) = (k-1)!((k-1)^{n-k+1}+(-1)^{n-k})$, and therefore 
\[
P_{n}(k) - P_{G-e}(k) = (k-1)!((-1)^{n-k} - (-1)^{n-k-1}(k-1) + k(k-1))
\]
which is positive for $k \geq 4$.  This implies that $P_G(k) \leq P_{G-e}(k) < P_n(k)$, which contradicts that $G$ is $(n,k)$-bad.

\textbf{Subcase 1b:} Suppose $P_{G_1}(k) = (k-1)!((k-1)^{n-k}+(-1)^{n-1-k})$, and so $G_1\cong G_{n-1,k}$, and also assume that $w_1$ and $v$ are adjacent. This means that each coloring of $G_1$ gives $k-2$ choices for the color on $w$, so we directly compute that 
\[
P_{G}(k) = (k-1)!((k-1)^{n-k}+(-1)^{n-1-k})(k-2).
\]
Here $n \geq k+1$ (as $n_1 \geq k$ and $n_2 =2$) and $k \geq 4$ imply
\[
((k-1)^{n-k} + (-1)^{n-1-k})(k-2) = ((k-1)^{n-k+1}-(k-1)^{n-k}+(-1)^{n-1-k}(k-2)).
\]
If $n>k+1$, then $P_G(k)<P_n(k)$, which contradicts the assumption at $G$ is $(n,k)$-bad. If $n=k+1$, then $G\cong G_{n,k}$, and again $G$ cannot be $(n,k)$-bad.

\textbf{Subcase 1c:} Suppose $P_{G_1}(k) = (k-1)!((k-1)^{n-k}+(-1)^{n-1-k})$, and so $G_1\cong G_{n-1,k}$, and $w_1$ and $v$ are not adjacent.  This means that $w_1$ and $v$ lie on the ear of $G_1$, and $w$ is adjacent to both $w_1$ and $v$.

Now, we enumerate all $k$-colorings of $G$ by first coloring the $n-k+2$ vertices on the ear (including the endpoints in the clique, which require different colors) and $w$, and then using one of $(k-2)!$ colorings on the rest of the clique.  Since $k \geq 4$, Lemma \ref{2connLem} implies that a cycle $C_{n-k+2}$ has strictly more $k$-colorings than the $n-k+2$ vertices in the ear plus $w$.  Therefore $P_G(k)<P_n(k)$, which contradicts the assumption that $G$ is $(n,k)$-bad.

\medskip

\textbf{Case 2:} Suppose that $n_1>k$ and $n_2 \geq 3$. We claim that 
$$\frac{P_{n_1}(k)\left((k-1)^{n_2}+(-1)^{n_2}(k-1)\right)}{k}<P_n(k).$$
This is equivalent to
$$2(-1)^{n-k+1}+\frac{(-1)^{n-k+1}}{k-1}<(k-1)^{n-k}-(-1)^{n_2}(k-1)^{n_1-k+1}-(-1)^{n_1-k}(k-1)^{n_2-1},$$
so it suffices to check that 
$$2+\frac{1}{k-1}<(k-1)^{n_1-k}(k-1)^{n_2-1}-(k-1)^{n_1-k+1}-(k-1)^{n_2-1}$$
or

\[
k+1 + \frac{1}{k-1} < \left( (k-1)^{n_1-k} - 1 \right) \left( (k-1)^{n_2-1}-(k-1)\right).
\]
Since $n_2 \geq 3$ and $n_1-k>0$ (by assumption), we have $(k-1)^{n_2-1}-(k-1) \geq (k-1)(k-2)$ and $(k-1)^{n_1-k}-1 \geq k-2$. But the inequality 
\[
k+1+\frac{1}{k-1} < (k-2)(k-2)(k-1).
\]
holds for $k \geq 4$. 

Putting this together, we have $P_G(k)\leq P_{G-e}(k)<P_n(k)$ and this contradicts the assumption that $G$ is $(n,k)$-bad.
 
 \medskip

\textbf{Case 3:} Suppose that $n_1=k$ and $n_2 \geq 3$. In this case $G_1$ is a $k$-clique. Then $e=u_1u_2$ where $u_i\in V(G_i)\setminus \{v\}$. Let $G_2'$ be a subgraph of $G$ induced by the vertices of $G_2$ and $u_1$. Now $G$ is the union of a $k$-clique and a $2$-connected graph $G_2'$ and they overlap over the edge $u_1v$. Also, if $G_2'$ is a cycle then $G_2$ is a path. But $G_2$ is $2$-connected and hence an edge, which contradicts that $n_2 \geq 3$. So $G_2'$ cannot be a cycle, and thus by Lemma~\ref{2connLem}, $P_{G_2'}(k)<(k-1)^{n_2+1}+(-1)^{n_2+1}(k-1)$. Therefore,
$$P_{G}(k)=\frac{P_{G_1}(k)P_{G_2'}(k)}{k(k-1)}< \frac{k!((k-1)^{n_2+1}+(-1)^{n_2+1}(k-1))}{k(k-1)}.$$
It is clear that the latter is equal to $P_n(k)$.
Therefore we obtain $P_G(k)<P_n(k)$, which again contradicts the assumption that $G$ is $(n,k)$-bad. 
Thus $G$ must be $k$-critical.
\end{proof}

All that remains is to prove that $(n,k)$-bad graphs must satisfy the property $C_k$ which we defined in Section~\ref{sec:background}. We gather the results we need into several lemmas. Let $F$ be a set of edges  and $x,y$ be two distinct vertices  of $G$. We say that $F$ is a {\it disconnecting set} of edges if $G\setminus F$ is disconnected and $F$ is called an $x,y$ {\it disconnecting set of edges} if the vertices $x$ and $y$ belong to different components of $G\setminus F$. A graph $G$ is called {\it k-edge-connected} if every disconnecting set of edges has at least $k$ edges. It was proven by Dirac \cite{dirac} that every $k$-critical graph is $(k-1)$-edge-connected. Also, the edge version of the well known Menger's Theorem \cite{menger} says that the minimum size of an $x,y$ disconnecting set of edges is equal to the maximum number of pairwise edge disjoint paths joining $x$ to $y$. These two results together imply Lemma~\ref{CritProp}\ref{edgeDisPath} and their proofs also can be found  on pages $211$ and $168$ in \cite{westbook} respectively. Let $S$ be a set of vertices of a graph $G$. We say that $S$ is a {\it cut-set} of $G$ if $G\setminus S$ is disconnected.  An {\it $S$-lobe} of $G$ is an induced subgraph of $G$ whose vertex set consists of $S$ and the vertices of a connected component of $G\setminus S$. Lemma~\ref{CritProp}\ref{lobe} appears on page $218$ in \cite{westbook}.

\begin{lemma}[\cite{westbook}]\label{CritProp}
If $G$ is a $k$-critical graph, then
\begin{enumerate}[label={(\roman*)},itemindent=1em]
    \item every two distinct vertices of $G$ are joined by $k-1$ pairwise edge disjoint paths, and \label{edgeDisPath} 
    \item for every cut-set $S=\{x,y\}$ of $G$, $xy\notin E(G)$ and $G$ has exactly two $S$-lobes and they can be named $G_1$, $G_2$ such that $G_1+xy$ is $k$-critical and $G_2/xy$ is $k$-critical. \label{lobe}
 \end{enumerate}
\end{lemma}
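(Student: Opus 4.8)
The plan is to reduce both parts to the two classical facts recalled just before the lemma: Dirac's theorem that every $k$-critical graph is $(k-1)$-edge-connected, and the edge form of Menger's theorem. First I would prove Dirac's theorem. If $G$ is $k$-critical but not $(k-1)$-edge-connected, write $V(G)=A\sqcup B$ with both parts nonempty and at most $k-2$ edges across; by $k$-criticality the proper subgraphs $G[A]$ and $G[B]$ admit proper $(k-1)$-colorings $c_A,c_B$, so it suffices to find a permutation $\pi$ of $[k-1]$ such that recoloring $G[B]$ by $\pi\circ c_B$ leaves no crossing edge monochromatic. Such a $\pi$ is exactly a perfect matching of $K_{k-1,k-1}$ that avoids the pair $(c_B(v),c_A(u))$ for every crossing edge $uv$ with $u\in A$; since at most $k-2$ pairs are forbidden, a short argument via Hall's theorem (if Hall's condition failed for some $t$ left vertices, each of them would have degree at least $k-t$ in the forbidden bipartite graph, forcing $t(k-t)\le k-2$, impossible since $t(k-t)\ge k-1$ for $1\le t\le k-1$) produces such a $\pi$, hence a $(k-1)$-coloring of $G$ --- a contradiction. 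Once $G$ is $(k-1)$-edge-connected, any edge cut separating two given vertices $u\ne v$ has at least $k-1$ edges, so Menger's theorem gives $k-1$ pairwise edge-disjoint $u$--$v$ paths; this proves \ref{edgeDisPath}.

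For \ref{lobe}, let $S=\{x,y\}$ be a cut-set and let $G_1,\dots,G_m$ ($m\ge 2$) be its lobes, each a proper subgraph of $G$ and hence $(k-1)$-colorable. I would classify a lobe as \emph{same}, \emph{diff}, or \emph{mixed} according to whether its proper $(k-1)$-colorings all identify $x$ and $y$, all separate them, or realize both. If no lobe were same, one could $(k-1)$-color every lobe with $x\mapsto 1$, $y\mapsto 2$ and glue the colorings, contradicting $\chi(G)=k$; so some lobe is same, and dually some lobe is diff. The key step is then to show $m=2$: if there were a third lobe, it would contain an edge $e\ne xy$ incident to a vertex of its own component, so $e$ lies in neither a fixed same lobe $G_a$ nor a fixed diff lobe $G_b$, and the $(k-1)$-coloring of $G-e$ guaranteed by $k$-criticality would restrict on $G_a$ to force $c(x)=c(y)$ and on $G_b$ to force $c(x)\ne c(y)$, which is absurd. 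Hence $m=2$, say with $G_1$ same and $G_2$ diff.

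Finally I would read off the structure. If $xy\in E(G)$, then $G_1$ would have no $(k-1)$-coloring at all, contradicting that it is a $(k-1)$-colorable same lobe; so $xy\notin E(G)$. That $G_1$ is same gives $\chi(G_1+xy)\ge k$, while $\chi(G_1+xy)\le k$ since any $(k-1)$-coloring of $G_1$ becomes a proper $k$-coloring of $G_1+xy$ upon recoloring $x$ with the new color $k$; and $G_1+xy$ is $k$-critical because deleting $xy$ returns $G_1$, whereas for any $e\in E(G_1)$ the $(k-1)$-coloring of $G-e$ from $k$-criticality (which restricts on $G_2$ to force $c(x)\ne c(y)$) restricts on $G_1-e$ to a $(k-1)$-coloring of $(G_1+xy)-e$. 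The statement for $G_2/xy$ is dual, exchanging ``$+xy$'' with ``$/xy$'' and ``same'' with ``diff''; the one delicate point is that $x$ and $y$ have no common neighbor inside $G_2$ --- otherwise deleting one of its two edges to such a neighbor would, by the same restriction argument, yield a $(k-1)$-coloring of $G$ --- which ensures $G_2/xy$ is simple and that each of its edges corresponds to a single edge of $G_2$ in the criticality check. I expect this bookkeeping around the contraction $G_2/xy$ to be the main obstacle; the remaining ingredients are just the Hall estimate, Menger's theorem, and the coloring-gluing arguments sketched above.
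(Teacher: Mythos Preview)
The paper does not prove this lemma: for part~(i) it observes that the claim follows from Dirac's theorem that $k$-critical graphs are $(k-1)$-edge-connected together with the edge form of Menger's theorem (citing pages 211 and 168 of West's textbook), and for part~(ii) it simply points to page 218 of West. Your proposal supplies complete and correct proofs of both parts, and they are essentially the standard textbook arguments. The Hall count for Dirac's theorem is clean (the inequality $t(k-t)\ge k-1$ for $1\le t\le k-1$ does the job), and your same/diff/mixed classification of lobes, with the edge-deletion trick first to force $m=2$ and then to verify criticality of $G_1+xy$ and $G_2/xy$, matches the route in West. The delicate point you flag---that $x$ and $y$ have no common neighbor in $G_2$, so that $G_2/xy$ is simple and each of its edges lifts to a unique edge of $G_2$---is handled correctly by your restriction argument. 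For the upper bound $\chi(G_2/xy)\le k$, which you leave as ``dual,'' the explicit version is: properly $(k-1)$-color the proper subgraph $G_2-\{x,y\}$ and assign the contracted vertex the new color $k$.
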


A {\it theta graph} is obtained by joining end-vertices of three internally disjoint paths. We shall also use the following upper bound which follows immediately from Lemma~4.4 in \cite{ereyDM}. (Note that for a simple graph, at most one of the internally disjoint paths can be an edge.)

\begin{lemma}[\cite{ereyDM}]\label{theta}  If $G$ is a connected graph containing a theta subgraph, then 
$$P_G(k)\leq \frac{(k-1)^{|V(G)|+1}}{k}\left(1+\frac{3}{(k-1)^3}+\frac{1}{(k-1)^4}\right).$$
\end{lemma}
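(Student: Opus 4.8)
The plan is to reduce the statement to the case where $G$ is itself a theta graph, and then to estimate the chromatic polynomial of a theta graph from its closed form; this second step is exactly \cite[Lemma~4.4]{ereyDM}, so in the write-up I would simply cite that lemma after recording the reduction.

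First I would set up the reduction. Let $\Theta\subseteq G$ be a theta subgraph on a set $W$ of $w$ vertices; since a theta graph has cyclomatic number $2$, it has $w+1$ edges. Because $G$ is connected I can contract $W$ to a single vertex, take a spanning tree of the resulting connected graph, and lift its edges, obtaining $|V(G)|-w$ edges of $G$ that together with $\Theta$ form a spanning connected subgraph $H$ of $G$ which is $\Theta$ with a forest attached. Ordering the extra $|V(G)|-w$ vertices so that each is joined by a single edge to an earlier vertex and building $H$ up one pendant vertex at a time gives $P_H(k)=P_\Theta(k)\,(k-1)^{|V(G)|-w}$, and since $H$ is a spanning subgraph of $G$ we have $P_G(k)\le P_H(k)$. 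So it suffices to prove $P_\Theta(k)\le \frac{(k-1)^{w+1}}{k}\bigl(1+\frac{3}{(k-1)^3}+\frac{1}{(k-1)^4}\bigr)$.

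Next I would write down the chromatic polynomial of $\Theta$ exactly. Present $\Theta$ as three internally disjoint $x$--$y$ paths with $p_1\le p_2\le p_3$ edges, so that $p_1+p_2+p_3=w+1$; since $G$ is simple, at most one path is a single edge, which forces $p_3\le w-2$. Conditioning a uniform $k$-coloring on the color of $x$ and on whether $c(y)=c(x)$, and using that a path on $\ell$ edges has $\frac1k\bigl((k-1)^\ell+(-1)^\ell(k-1)\bigr)$ colorings of its interior with prescribed equal endpoint colors and $\frac1k\bigl((k-1)^\ell-(-1)^\ell\bigr)$ colorings with each prescribed unequal pair, expanding the product over the three paths and simplifying yields
\[
P_\Theta(k)=\frac{(k-1)^{w+1}}{k}+\frac{(-1)^{w+1}(k-1)}{k}\Bigl(\textstyle\sum_{i=1}^{3}(1-k)^{p_i}+k-2\Bigr),
\]
where, crucially, the three "single-path" terms of the expansion cancel in pairs, so the first correction appears only at the order of $(k-1)^{w-2}$.

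The hard part is the final estimate of the bracketed quantity, since the crude triangle-inequality bound $\bigl|\sum_i(1-k)^{p_i}\bigr|\le 3(k-1)^{p_3}$ is too weak: the dominant term $(1-k)^{p_3}$ can have magnitude $(k-1)^{w-2}$, whereas the budget for $\bigl|\sum_i(1-k)^{p_i}+k-2\bigr|$ is only $\approx 3(k-1)^{w-3}$. The resolution is a parity dichotomy on $w+p_3$: since $(-1)^{w+1}(1-k)^{p_3}=(-1)^{w+1+p_3}(k-1)^{p_3}$, when $w+p_3$ is even the dominant term is negative and helps, so crude bounds on the other two summands already close the estimate with room to spare; when $w+p_3$ is odd, the equality $p_3=w-2$ would force $(p_1,p_2)=(1,2)$ and hence $w+p_3=2w-2$ even, a contradiction, so $p_3\le w-3$, and then $\bigl|\sum_i(1-k)^{p_i}+k-2\bigr|\le 3(k-1)^{w-3}+(k-1)^{w-4}$ follows from $p_1\le p_2\le p_3\le w-3$, which is exactly the claimed budget. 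Finally the handful of small theta graphs with $w\in\{4,5\}$ are checked by direct computation. All of this is carried out in \cite[Lemma~4.4]{ereyDM}; the only new ingredient here is the spanning-subgraph reduction, which is routine.
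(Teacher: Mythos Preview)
Your proposal is correct and essentially mirrors the paper's approach: the paper simply states that the lemma ``follows immediately from Lemma~4.4 in \cite{ereyDM}'', and your write-up supplies the routine spanning-subgraph reduction to a theta graph and then invokes (and sketches) that same lemma. The extra detail you give on the closed form of $P_\Theta(k)$ and the parity dichotomy is accurate and matches the argument in \cite{ereyDM}, so there is nothing to change beyond citing the source as the paper does.
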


Lastly, we need the following result on the maximum number of colorings of a graph with chromatic number at least $3$.

\begin{lemma}[\cite{To90}]\label{3chrom} Let $G$ be a connected graph on $n$ vertices with $\chi(G)\geq 3$.
\begin{itemize}
\item If $n$  is odd, $P_G(k)\leq (k-1)^n-(k-1)$ with equality if and only if $G\cong C_n$.
\item If $n$ is even, $P_G(k)\leq (k-1)^n-(k-1)^2$ with equality if and only if $G$ is a cycle with a pendant vertex attached.
\end{itemize}
\end{lemma}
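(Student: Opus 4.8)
The plan is to avoid any case analysis on the connectivity of $G$ and instead reduce directly to unicyclic graphs, using that deleting an edge never decreases the chromatic polynomial at a nonnegative integer. We may assume $k\ge 3$, since for $k\le 2$ both sides of each inequality vanish because $\chi(G)\ge 3$.

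First I would pass to a spanning unicyclic subgraph carrying an odd cycle. Since $\chi(G)\ge 3$, the graph $G$ contains an odd cycle $C$, say of length $g$ with $3\le g\le n$. Contracting $V(C)$ to a single vertex gives a connected graph on $n-g+1$ vertices; taking a spanning tree of it and lifting the tree edges back to $G$ produces a spanning subgraph $H\subseteq G$ whose edge set is $E(C)$ together with $n-g$ further edges, so $H$ is connected with exactly $n$ vertices and $n$ edges and is therefore unicyclic; a quick check confirms its unique cycle is $C$. From the deletion--contraction identity $P_{F-e}(k)=P_F(k)+P_{F/e}(k)$ and $P_{F/e}(k)\ge 0$, removing edges cannot decrease the chromatic polynomial at a nonnegative integer, so $P_G(k)\le P_H(k)$.

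Next I would compute $P_H(k)$. As $H$ is the cycle $C$ of odd length $g$ with pendant trees attached, coloring $C$ and then extending greedily along the trees gives $P_H(k)=P_{C_g}(k)\,(k-1)^{n-g}$, and since $g$ is odd, $P_{C_g}(k)=(k-1)^g+(-1)^g(k-1)=(k-1)^g-(k-1)$. Hence
\[
P_G(k)\le P_H(k)=(k-1)^n-(k-1)^{\,n-g+1}.
\]
As $k-1\ge 2$, the right-hand side is increasing in $g$, so it is maximized by the largest admissible $g$: namely $g=n$ when $n$ is odd and $g=n-1$ when $n$ is even, which gives the two stated bounds.

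For the equality cases I would argue that equality forces $g$ to be maximal and $P_G(k)=P_H(k)$; then $H$ must be $C_n$ (when $n$ is odd) or $C_{n-1}$ with a single pendant vertex (when $n$ is even), as these are the only connected unicyclic $n$-vertex graphs with an odd cycle of the prescribed length. If $G$ properly contained $H$, I would pick $e\in E(G)\setminus E(H)$ and use $P_G(k)\le P_{H+e}(k)=P_H(k)-P_{H/e}(k)$; the graph $H/e$ is obtained from a cycle by contracting a chord (yielding two cycles sharing a vertex) or by reattaching the pendant vertex (yielding a cycle with at most one extra edge), so $\chi(H/e)\le 3\le k$, whence $P_{H/e}(k)>0$ and $P_G(k)<P_H(k)$, a contradiction; so $G=H$. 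I expect this equality analysis --- in particular checking that every relevant contraction keeps the chromatic number at most $k$ --- to be the only delicate point; the rest is routine.
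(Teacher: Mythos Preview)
The paper does not give its own proof of this lemma; it is quoted verbatim from Tomescu~\cite{To90} and used as a black box. So there is nothing in the paper to compare your argument against.

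Your proof is correct. Reducing to a spanning unicyclic subgraph containing a chosen odd cycle, and then using $P_H(k)=\big((k-1)^g-(k-1)\big)(k-1)^{n-g}=(k-1)^n-(k-1)^{n-g+1}$, is exactly the natural route; the monotonicity in $g$ immediately gives both inequalities, and the equality argument via $P_G(k)\le P_{H+e}(k)=P_H(k)-P_{(H+e)/e}(k)$ is sound. Two small remarks. First, in the even case when $e$ is a chord of the $(n-1)$-cycle, the contraction $(H+e)/e$ is two cycles sharing a vertex \emph{with the pendant still attached}, not just two cycles sharing a vertex; this does not affect your conclusion $\chi\le 3$. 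Second, the ``if and only if'' clause in the lemma is only meaningful for $k\ge 3$ (for $k\le 2$ both sides vanish identically, so every $G$ gives equality); your opening sentence handles the inequality for $k\le 2$ but the equality characterization should simply be stated for $k\ge 3$.
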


We are now ready to prove the next result that we will need.

\begin{lemma}\label{cont2conn} Let $n\geq k\geq 4$ and $G$ be an $(n,k)$-bad graph. Then $G/uv$ is $2$-connected for every pair of nonadjacent vertices $u$ and $v$.
\end{lemma}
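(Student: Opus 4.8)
The plan is to argue by contradiction: suppose $u,v$ are nonadjacent vertices of an $(n,k)$-bad graph $G$ with $G/uv$ not $2$-connected, and derive that $G$ has too few $k$-colorings. Since $G$ is $2$-connected, $G/uv$ is connected, so if it is not $2$-connected it has a cut-vertex. The key structural observation is that a cut-vertex of $G/uv$ corresponds either to the identified vertex $\{u,v\}$ itself, or to an actual vertex $w$ of $G$ such that $\{u,v,w\}$ is a cut-set of $G$. By Lemma~\ref{criticalLemma}, $G$ is $k$-critical, so I can bring in Lemma~\ref{CritProp}: part~\ref{edgeDisPath} says $u$ and $v$ are joined by $k-1 \ge 3$ pairwise edge-disjoint paths, and part~\ref{lobe} controls the $S$-lobe structure at $2$-vertex cut-sets. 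The existence of many edge-disjoint $u$--$v$ paths, combined with the extra structure coming from the cut-vertex of $G/uv$, should force $G$ to contain a theta subgraph (three internally disjoint paths between two vertices), or some similarly rich subgraph, which is exactly the situation Lemma~\ref{theta} penalizes.

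The main steps, in order, would be: (1) Translate "$G/uv$ has a cut-vertex" into a statement about $G$, splitting into the case where the identified vertex is the cut-vertex (so $G$ itself has a $2$-cut $\{u,v\}$, handled by Lemma~\ref{CritProp}\ref{lobe} since $uv \notin E(G)$), and the case where some $w \in V(G)$ is the cut-vertex (so $\{u,v,w\}$ disconnects $G$ into pieces each of which, together with the three vertices, forms a lobe). (2) In the first case, write $G = G_1 \cup G_2$ along the cut $\{u,v\}$ with $G_1 + uv$ and $G_2/uv$ both $k$-critical; since $uv \notin E(G)$ but $u,v$ are joined by $k-1 \ge 3$ edge-disjoint paths in $G$, and these paths must distribute among the lobes, at least one lobe already contains two internally disjoint $u$--$v$ paths not using the edge $uv$, so that $G_i + uv$ contains a theta subgraph; then bound $P_G(k) = P_{G_1 + uv}(k) P_{G_2}(k) / (\text{overlap})$ or similar, using Lemma~\ref{theta} on the theta-containing factor and Lemma~\ref{2connLem} or Lemma~\ref{3chrom} on the other, and check the resulting product is $< P_n(k)$ for $k \ge 4$. (3) In the second case with cut-vertex $w$, note $G/uv$ decomposes into blocks glued at $w$; pull this back to a decomposition of $G$ along $\{u,v,w\}$, observe again that the $k-1 \ge 3$ edge-disjoint $u$--$v$ paths together with the connections through $w$ produce a theta subgraph in $G$ (or in a small modification of $G$), and run the same product estimate. (4) Collect the inequalities and conclude that $P_G(k) < P_n(k)$ in every case, contradicting $(n,k)$-badness.

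I expect the main obstacle to be Step~(3): the bookkeeping when $G/uv$ has a cut-vertex that is a genuine vertex $w$ of $G$ is fiddlier than the $\{u,v\}$-cut case, because one must carefully track which parts of the three edge-disjoint $u$--$v$ paths survive in $G/uv$ and argue the blocks of $G/uv$ hanging off $w$ cannot all be trivial — otherwise $w$ would not actually be a cut-vertex, or $G$ would fail to be $2$-connected. A secondary delicate point is verifying that the crude product bounds from Lemmas~\ref{2connLem}, \ref{theta}, and~\ref{3chrom} are actually strong enough in the smallest cases (e.g. when one of the lobes has only a handful of vertices, or $n$ is close to $k$); as in Lemma~\ref{criticalLemma}, these boundary cases may need to be checked by hand, possibly using the divisibility of $P_H(k)$ by $k!$ to sharpen the estimate by a full $k!$ when a factor is not exactly extremal. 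One should also double-check that contracting along $uv$ never creates a multi-edge that matters here — since $u,v$ are nonadjacent this is automatic, but common neighbors of $u$ and $v$ become single edges in $G/uv$, which only helps (fewer colorings constraints), and does not affect the theta-subgraph argument carried out in $G$ itself.
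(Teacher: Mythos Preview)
Your Step~(3) is unnecessary: if $G$ is $2$-connected and $w$ is any vertex of $G/uv$ other than the contracted vertex, then $(G/uv)\setminus\{w\} = (G\setminus\{w\})/uv$, and $G\setminus\{w\}$ is connected by $2$-connectedness, so contracting keeps it connected. Hence the only possible cut-vertex of $G/uv$ is the contracted vertex itself, i.e.\ $\{u,v\}$ is a cut-set of $G$. You flagged Step~(3) as the main obstacle, but it simply does not occur.

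In Step~(2) there is a real gap. Your proposed formula ``$P_G(k) = P_{G_1+uv}(k)P_{G_2}(k)/(\text{overlap})$'' does not exist: the standard product decomposition along a separating set requires that set to induce a clique, and here $u,v$ are nonadjacent. The paper fixes this via the addition--contraction identity
\[
P_G(k)=P_{G+uv}(k)+P_{G/uv}(k)=\frac{P_{G_1+uv}(k)\,P_{G_2+uv}(k)}{k(k-1)}+\frac{P_{G_1/uv}(k)\,P_{G_2/uv}(k)}{k},
\]
where now each summand \emph{does} split as a product because $G+uv$ is glued along an edge and $G/uv$ along a vertex. From here the paper does not rely on a single theta bound for one lobe as you suggest; instead it runs four cases according to whether $G_1+uv$ and $G_2/uv$ are $k$-cliques (Lemma~\ref{CritProp}\ref{lobe} guarantees both are $k$-critical). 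When a piece is not a $k$-clique, the minimality of $G$ gives $P\le P_{n_i}(k)-k!$, and this extra $-k!$ is what makes the arithmetic close; the theta bound (Lemma~\ref{theta}) is invoked only in the case where $G_1+uv$ \emph{is} a clique, and is applied to $G_2+uv$ rather than found via your pigeonhole on the $k-1$ edge-disjoint $u$--$v$ paths. Your pigeonhole idea is valid as far as it goes, but combining a theta bound on one lobe with Lemma~\ref{2connLem} on the other is generally too weak: you need the inductive hypothesis on whichever lobe is not a clique.
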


\begin{proof}
By Lemma~\ref{criticalLemma} we may assume that $G$ is $k$-critical.
Suppose $G/uv$ is not $2$-connected for some pair of nonadjacent vertices $u$ and $v$, which means that $S=\{u,v\}$ is a cut-set of $G$. By Lemma~\ref{CritProp}\ref{lobe} the graph $G$ has two $S$-lobes $G_1$ and $G_2$ such that both $G_1+uv$ and   $G_2/uv$ are $k$-critical.  Let $n_1=|V(G_1)|$ and $n_2=|V(G_2)|$. Note that $\chi (G_1/uv), \, \chi(G_2+uv)\geq k-1$ and
\[
P_G(k)= P_{G+uv}(k)+P_{G/uv}(k)= \frac{P_{G_1+uv}(k)P_{G_2+uv}(k)}{k(k-1)}+\frac{P_{G_1/uv}(k)P_{G_2/uv}(k)}{k}.
\]

Now we consider several cases. In each case we will show that $P_G(k)<P_n(k)$  and this contradicts the fact that $G$ is $(n,k)$-bad.

\medskip

\textbf{Case 1}: $G_1+uv$ is a $k$-clique and $G_2/uv$ is not a $k$-clique.

Since $G_1+uv$ is a $k$-clique, it is clear that $G_1/uv$ is a $(k-1)$-clique, $P_{G_1+uv}(k)=k!$, and $P_{G_1/uv}(k)=(k-1)!$. Now let us show that $G_2+e$ has a theta subgraph. Since $G$ is $k$-critical, $N_G(u)\nsubseteq N_G(v)$ and $N_G(v)\nsubseteq N_G(u)$. Since $G_1+uv$ is a $k$-clique we have $N_{G_1}(u)= N_{G_1}(v)$, and so there are two vertices $u'$ and $v'$  such that $u'\in N_{G_2}(u)\setminus N_{G_2}(v)$ and $v'\in N_{G_2}(v)\setminus N_{G_2}(u)$. Let $w$ be the vertex in $G_2/uv$ obtained by contracting the vertices $u$ and $v$. Since $G_2/uv$ is $k$-critical and $k\geq 4$, by Lemma~\ref{CritProp}\ref{edgeDisPath} the vertices $u'$ and $v'$ are joined by three pairwise edge disjoint paths with one of them possibly being the path $u'wv'$. So, in $G_2+e$, the vertices $u$ and $v$ are  joined by three pairwise edge disjoint paths with one of them possibly being the path $u'uvv'$. Hence, $G_2+uv$ has a theta subgraph. So, by Lemma \ref{theta}, $P_{G_2+uv}(k)\leq \frac{(k-1)^{n_2+1}}{k}\left(1+\frac{3}{(k-1)^3}+\frac{1}{(k-1)^4}\right)$. Therefore,
$$P_{G+uv}(k)\leq\frac{(k-1)!}{k}\left((k-1)^{n_2}+3(k-1)^{n_2-3}+(k-1)^{n_2-4}\right).$$
Also, since $G_2/uv$ is not a counterexample for Theorem \ref{thm:2-conn} and the number of $k$-colorings of two $k$-chromatic graphs differs by a multiple of $k!$, it follows that $P_{G_2/uv}(k)\leq P_{n_2-1}(k)-k!$ (here we also have used the fact that $G_2/uv$ is $k$-critical and not a $k$-clique, so cannot contain a $k$-clique as a subgraph, and therefore $P_{G_2/uv}(k) \neq P_{n_2-1}(k)$). Therefore,
$$P_{G/uv}(k)\leq \frac{((k-1)!)^2}{k}((k-1)^{n_2-k}+(-1)^{n_2-1-k}-k).$$
Now we claim that the sum of the upper bounds obtained for $P_{G+uv}(k)$ and $P_{G/uv}(k)$ is less than $P_n(k)$. The latter is equivalent to 
$$(k-1)^{n_2-k}(3(k-1)^{k-3}+(k-1)^{k-4}+(k-1)!)-(k-1)!((-1)^{n_2-k}+k)<(k-1)^{n_2-1}+(-1)^{n_2}k.$$
It is trivial that $-(k-1)!((-1)^{n_2-k}+k)<(-1)^{n_2}k$ for $k \geq 4$, so it suffices to check that 
$$(k-1)^{n_2-k}(3(k-1)^{k-3}+(k-1)^{k-4}+(k-1)!)<(k-1)^{n_2-1}.$$
The latter is equivalent to
$$\frac{3}{(k-1)^2}+\frac{1}{(k-1)^3}+\frac{(k-1)!}{(k-1)^{k-1}}<1,$$
and for $k \geq 4$ we have $\frac{3}{(k-1)^2}\leq \frac{1}{3}$, $\frac{1}{(k-1)^3}\leq \frac{1}{27}$, and $\frac{(k-1)!}{(k-1)^{k-1}} \leq \frac{1}{3}$ and so the inequality holds, which implies $P_G(k)<P_n(k)$ in this case.

\medskip

\textbf{Case 2}: $G_2/uv$ is a $k$-clique and $G_1+uv$ is not a $k$-clique.

In this case we have $G_1+uv$ is not a counterexample for Theorem~\ref{thm:2-conn}, so $P_{G_1+uv}(k)\leq P_{n_1}(k)$. Since $G_1+uv$ is $k$-critical and not a $k$-clique, it cannot contain a $k$-clique as a proper subgraph, so as in the previous case we have
$P_{G_1+uv}(k)\leq P_{n_1}(k)-k!$. Since $G_2/uv$ is a $k$-clique and $k\geq 4$, either $u$ or $v$ has at least two neighbors in $V(G_2)\setminus \{u,v\}$. One can greedily color the graph $G_2+uv$ and  obtain at most $k!(k-1)(k-2)$ many $k$-colorings. This implies that that 
$$P_{G+uv}(k)\leq (P_{n_1}(k)-k!)(k-2)(k-1)!.$$
Since $G_2/uv$ is a $k$-clique, $P_{G_2/uv}(k)=k!$. Also, $\chi(G_1/uv)\geq 3$ and therefore  $P_{G_1/uv}(k)\leq (k-1)^{n_1-1}-(k-1)$ by Lemma~\ref{3chrom}. So,
$$P_{G/uv}(k)<(k-1)!\left((k-1)^{n_1-1}-(k-1)\right).$$
It is easy to check that $(k-1)!((k-2)(P_{n_1}(k)-k!)+(k-1)^{n_1-1}-(k-1))$ is strictly less than $P_n(k)$ for $k \geq 4$. Thus we get $P_G(k)<P_n(k)$ in this case.

\medskip

\textbf{Case 3}: Neither  $G_1+uv$ nor   $G_2/uv$ is a $k$-clique.

In this case we have $G_1+uv$  satisfies Theorem~\ref{thm:2-conn}, so $P_{G_1+uv}(k)\leq P_{n_1}(k)$. Since $G_1+uv$ is $k$-critical, it cannot contain a $k$-clique as a proper subgraph.  As in the previous cases, this implies that $P_{G_1+uv}(k)\leq P_{n_1}(k)-k!$. Similarly, we have $P_{G_2/uv}(k)\leq P_{n_2-1}(k)-k!$. Since $\chi (G_1/uv)\geq 3$ and $\chi(G_2+uv)\geq 3$, by Lemma~\ref{3chrom}, we have $P_{G_1/uv}(k)\leq(k-1)^{n_1-1}-(k-1)$ and $P_{G_2+uv}(k)\leq(k-1)^{n_2}-(k-1)$. So we get

$$P_{G+uv}(k)\leq\frac{(P_{n_1}(k)-k!)((k-1)^{n_2}-(k-1))}{k(k-1)}$$
and 
$$P_{G/uv}(k)\leq\frac{((k-1)^{n_1-1}-(k-1))(P_{n_2-1}(k)-k!)}{k}.$$

Now we claim that the sum of the upper bounds we obtained for $P_{G+uv}(k)$ and $P_{G/uv}(k)$ is less than $P_n(k)$.
The latter is equivalent to 
\[
k-(-1)^{n_1-k}+(k-1)(k-(-1)^{n_2-k-1})+(-1)^{n-k+1}(k-1)+(-1)^{n-k+1}
\]
being less than
\[
(k-1)^{n_1-k+1}+(k-(-1)^{n_1-k})(k-1)^{n_2-1}+(k-(-1)^{n_2-k-1})(k-1)^{n_1-1}+(k-1)^{n_2-k+1}
\]
and since $n_1 \geq k+1$ and $n_2 \geq k+2$ (as $G_1+uv$ is critical and not a clique, and $G_2/uv$ is critical and not a $k$-clique) it easy to check that this inequality holds. Therefore $P_{G}(k)<P_n(k)$ in this case.

\medskip

\textbf{Case 4}: Both  $G_1+uv$ and $G_2/uv$ are $k$-cliques.

From similar arguments given in the previous cases we have the bounds $P_{G+uv}(k)\leq k!(k-1)!(k-2)$ and $P_{G/uv}(k)=(k-1)!(k-1)!$.  Furthermore, we know that $n=2k+1$ in this case. It is easy to check that $k!(k-1)!(k-2)+(k-1)!(k-1)!$ is less than $(k-1)!\left((k-1)^{n-k+1}+(-1)^{n-k}\right)$, and so $P_{G}(k)<P_n(k)$ in this case.
\end{proof}

\begin{lemma}\label{2conn_contr_prob}
Let $n\geq k\geq 4$ and $G$ be an $(n,k)$-bad graph. Then $G$ satisfies property $C_k$.
\end{lemma}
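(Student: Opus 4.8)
I would argue by contradiction, so suppose the $(n,k)$-bad graph $G$ fails property $C_k$: there are distinct vertices $u,v$ with $\Pr_{c}[c(u)=c(v)]\ge\frac{1}{k-1}$, i.e.\ $(k-1)P_{G/uv}(k)\ge P_G(k)$. Since $uv\in E(G)$ would give $P_{G/uv}(k)=0<\frac{1}{k-1}P_G(k)$, the pair $u,v$ must be nonadjacent. First I would record the basic facts: by Lemma~\ref{criticalLemma} the graph $G$ is $k$-critical, and by Lemma~\ref{cont2conn} the graph $G/uv$ is $2$-connected. Lifting any proper coloring of $G/uv$ to $G$ (give $u$ and $v$ the color of the contracted vertex, legitimate since $uv\notin E(G)$) shows $\chi(G/uv)\ge\chi(G)=k$, while $P_{G/uv}(k)\ge\frac{1}{k-1}P_G(k)>0$ forces $\chi(G/uv)\le k$; hence $\chi(G/uv)=k$. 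As $G/uv$ is then a $2$-connected $k$-chromatic graph on $n-1$ vertices, minimality of $n$ gives $P_{G/uv}(k)\le P_{n-1}(k)$, with equality only if $G/uv\cong G_{n-1,k}$.

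Next I would compute the two elementary identities $P_n(k)-(k-1)P_{n-1}(k)=(k-1)!\,k\,(-1)^{n-k}$ and $P_n(k)-(k-1)(P_{n-1}(k)-k!)=(k-1)!\,k\,(k-1+(-1)^{n-k})$, and then split on the parity of $n-k$. If $n-k$ is even, the first identity gives $(k-1)P_{n-1}(k)<P_n(k)$, hence $P_G(k)\le(k-1)P_{G/uv}(k)\le(k-1)P_{n-1}(k)<P_n(k)$, contradicting that $G$ is $(n,k)$-bad. If $n-k$ is odd, I would first show $G/uv\not\cong G_{n-1,k}$ (see below); then $P_{G/uv}(k)<P_{n-1}(k)$, and since chromatic polynomials of $k$-chromatic graphs differ at $k$ by a multiple of $k!$, in fact $P_{G/uv}(k)\le P_{n-1}(k)-k!$. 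As $k-1+(-1)^{n-k}=k-2>0$, the second identity yields $(k-1)(P_{n-1}(k)-k!)<P_n(k)$, so $P_G(k)\le(k-1)(P_{n-1}(k)-k!)<P_n(k)$, again a contradiction. Thus it suffices to prove that $G/uv\cong G_{n-1,k}$ forces $n=k+2$ (an even offset from $k$, so impossible when $n-k$ is odd).

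For that, let $w$ be the vertex of $G_{n-1,k}$ to which $u$ and $v$ are identified, so $N_{G_{n-1,k}}(w)=N_G(u)\cup N_G(v)$ and $G_{n-1,k}-w\cong G-\{u,v\}$; I would use the standard facts that a $k$-critical graph has minimum degree $\ge k-1$ and (as $n>k$) contains no $K_k$. Every vertex of $G_{n-1,k}$ has degree $2$ (internal ear vertices), $k-1$ (clique vertices that are not ear-endpoints), or $k$ (the two ear-endpoint clique vertices). Since $\deg_{G_{n-1,k}}(w)=|N_G(u)\cup N_G(v)|\ge\deg_G(u)\ge k-1>2$, $w$ is a clique vertex. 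If $\deg_{G_{n-1,k}}(w)=k-1$, then $B:=N_G(u)\cup N_G(v)$ is a $(k-1)$-clique of $G$, and $|N_G(u)|\ge k-1=|B|$ forces $N_G(u)=B$, making $\{u\}\cup B$ a $K_k$ in $G$ --- impossible. If $\deg_{G_{n-1,k}}(w)=k$, then $N_{G_{n-1,k}}(w)=B\cup\{p\}$, where $B$ is the $(k-1)$-clique of remaining clique vertices and $p$ is the internal ear vertex adjacent to $w$; from $|N_G(u)\cap B|\le k-2$ and $\deg_G(u)\ge k-1$ one gets $|N_G(u)\cap B|=k-2$ and $p\in N_G(u)$, and symmetrically $p\in N_G(v)$. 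Then in $G$ the vertex $p$ is adjacent to $u$, $v$, and its unique other ear-neighbor, so $\deg_G(p)=3$, forcing $k=4$; and if the ear had two or more internal vertices, the one farthest from $w$ would have degree $2$ in $G$, violating minimum degree $k-1$. Hence the ear has exactly one internal vertex and $n=k+2$, as needed.

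The step I expect to be the main obstacle is this last structural analysis. Unlike the connected case of \cite{FoHeMa}, where contracting a ``heavy'' pair always produces a strictly smaller graph to which induction applies with slack, here the comparison between $(k-1)P_{n-1}(k)$ and $P_n(k)$ is tight --- they differ only in the sign of a lower-order term --- and $G/uv$ really can be isomorphic to the extremal graph $G_{n-1,k}$ (this happens for $k=4$, $n=6$). The work lies in using $2$-connectivity and $k$-criticality of $G$ to locate the contracted vertex inside $G_{n-1,k}$ precisely enough to see that this phenomenon is confined to the range $n=k+2$, where $n-k$ is even and the crude bound already closes the argument.
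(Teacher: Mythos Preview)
Your argument is correct and follows the same overall architecture as the paper's proof: establish $k$-criticality and $2$-connectivity of $G/uv$ via Lemmas~\ref{criticalLemma} and~\ref{cont2conn}, observe $\chi(G/uv)=k$, bound $P_{G/uv}(k)$ by minimality of $n$, and split on the parity of $n-k$. The one place where you diverge is in resolving the boundary case $G/uv\cong G_{n-1,k}$ in the odd-parity branch. You carry out a structural analysis of where the contracted vertex $w$ sits inside $G_{n-1,k}$, using the minimum-degree bound and the absence of a $K_k$ in a $k$-critical graph on more than $k$ vertices to force $k=4$ and $n=k+2$, hence $n-k$ even, which is vacuous in the odd-parity branch. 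The paper instead disposes of this case more directly: once $w$ is located in the $k$-clique of $G_{n-1,k}$, the remaining $k-1$ clique vertices form a $K_{k-1}$ in $G$, and one builds a proper $(k-1)$-coloring of $G$ by coloring this $K_{k-1}$, then $u$ and $v$, and finally extending greedily along the ear (using $k-1\ge 3$), contradicting $\chi(G)=k$. Your route is longer but extracts sharper structural information --- it pinpoints the unique pair $(n,k)=(6,4)$ at which $G/uv\cong G_{n-1,k}$ is not immediately absurd --- whereas the paper's coloring argument is quicker and sidesteps the case split on $\deg_{G_{n-1,k}}(w)$.
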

\begin{proof}
Let $u$ and $v$ be nonadjacent vertices, and suppose on the contrary that $\Pr_{c}[c(u)=c(v)]\geq \frac{1}{k-1}$. Since $G$ is $(n,k)$-bad we have $P_G(k)\geq P_n(k)$.  Therefore 
\begin{eqnarray*}
P_{G/uv}(k) & =&  \Pr_c[c(u)=c(v)]\, P_G(k) \\
&\geq & \frac{1}{k-1} (k-1)!((k-1)^{n-k+1}+(-1)^{n-k}) \\
&=& (k-1)!\Big((k-1)^{n-k}+\frac{(-1)^{n-k}}{k-1}\Big).
\end{eqnarray*}

Now $G/uv$ is $k$-chromatic as $G$ is $k$-critical and $u$ and $v$ are not adjacent. Also, $G/uv$ is $2$-connected by Lemma~\ref{cont2conn}. If $n-k$ is even, it is clear that 
\[
(k-1)!\Big((k-1)^{n-k}+\frac{(-1)^{n-k}}{k-1}\Big) > (k-1)!((k-1)^{n-k}+(-1)^{n-k-1}).
\]

This means that $G/uv$ is a counterexample for Theorem~\ref{thm:2-conn} and this contradicts $G$ being a minimal counterexample. If $n-k$ is odd, $P_{G/uv}(k)\geq (k-1)!(k-1)^{n-k}-(k-2)!$ and the number of $k$-colorings of the extremal graph for Theorem~\ref{thm:2-conn} on $n-1$ vertices is $(k-1)!(k-1)^{n-k}+(k-1)!$. Now the number of $k$-colorings of two $k$-chromatic graphs differ by a multiple of $k!$, but the difference between $(k-1)!(k-1)^{n-k}-(k-2)!$ and $(k-1)!(k-1)^{n-k}+(k-1)!$ is $(k-1)!+(k-2)!$ which is strictly less than $k!$ as $k \geq 4$. This implies that in fact $P_{G/uv}(k)\geq (k-1)!(k-1)^{n-k}+(k-1)!$. 

Since $G/uv$ is not a counterexample,  $P_{G/uv}(k)= (k-1)!(k-1)^{n-k}+(k-1)!$ and also $G/uv\cong G_{n-1, k}$. Let $w$ be the vertex of $G/uv$ which is obtained by contracting $u$ and $v$. If $w$ does not belong to the $k$-clique of $G/uv$ then $G$ contains a $k$-clique and $G$ has more than $k$ vertices, which together contradict the fact that $G$ is $k$-critical.  So we now assume that $w$ belongs to the $k$-clique of $G/uv$. The remaining vertices of the clique belong to a $(k-1)$-clique of $G$.

In this case, we can build a $(k-1)$-coloring of $G$ as follows. Start by coloring the vertices of the $(k-1)$-clique. Next, color $u$ and $v$ the same color. The remaining vertices lie on the ear of $G/uv$, and can be colored accordingly since $k\ge 4$. Thus $G$ is not $k$-chromatic.
\end{proof}

We have all the necessary lemmas and theorems to prove the result for $2$-connected graphs.

\medskip

\noindent {\it Proof of Theorem~\ref{thm:2-conn}}. Suppose there exists an $(n,k)$-bad graph $G$. By Lemma~\ref{criticalLemma}, we know that $G$ is $k$-critical. Also, by Lemma~\ref{2conn_contr_prob}, we have $G$ satisfies property $C_k$. This means that Theorem \ref{fox_he_general} applies, and so in particular $n\geq 2k-1$ and the bounds given hold.  But it is straightforward to check that
$$ k!\left(\frac{k+1}{2}+\frac{n(k-2)}{6(n-k)}\right)^{n-k}<P_n(k)$$ holds for all $n>k^2-k$ when $k\geq 5$ and for $n\geq 18$ when $k=4$; and $$k!\left(\frac{7k+5}{12}+\frac{n(k-2)}{12(n-k)}\right)^{n-k}<P_n(k)$$ holds for $2k-1\leq n\leq k^2-k$ when $k\geq 6$ and for $n\geq 11$ when $k=5$. Thus, if there exists an $(n,k)$-bad graph then either $k=4$ and $7\leq n\leq 17$ or $k=5$ and $n=9,10$. Computer aided calculations show that there are no $(9,5)$, $(10,5)$ and $(n,4)$-bad graphs with $7\leq n\leq 10$. Also, by Theorem~\ref{fox_he_4crit}, if $G$ is an $(n,4)$-bad graph then $P_G(4)$ is less than $2^{(11n-54)/12}\,3^{(2n-3)/6}$ and it is straightforward to check  that the latter is less than $3!(3^{n-3}+(-1)^{n-4})$ for $11\leq n\leq 17$. Thus, there does not exist a counterexample for Theorem~\ref{thm:2-conn} and the result follows.

\section{Connected graphs with given minimum degree} \label{sec:min-deg}
In this section, we prove Theorems~\ref{thm:l-conn} and~\ref{thm-mindeg1} together. As an $\ell$-connected graph has minimum degree $\ell$, we will begin by proving structural results on the extremal graphs with a given minimum degree, then specialize to the $\ell$-connected setting.

We begin with the following definitions.
\begin{definition}
Fix $\delta\geq 3$ and $k\geq 4$. We say that a graph $G$ is an {\em $(n,k,\delta)$-graph} if it is a connected graph with $n$ vertices, minimum degree $\delta$, and chromatic number $k$.

Furthermore, $G$ is {\em $(n,k,\delta)$-maximum} if it is an $(n,k,\delta)$-graph and has the largest value of $P_G(k)$ among all $(n,k,\delta)$-graphs. 
\end{definition}
We will assume that $n$ is sufficiently large so that $(n,k,\delta)$-graphs exist. We now describe a ``typical" $(n,k,\delta)$-graph with many $k$-colorings. Let $G_1$ be the graph on a vertex set with three parts $X, Y, Z$ of orders $k-1$, $\delta$, and $n-\delta-k+1$, respectively, where the edges of $G_1$ are all pairs within $X$, so $G[X]$ is a clique, and all pairs from $Y$ to $X\cup Z$. This graph is easily seen to be an $(n,k,\delta)$-graph if $n$ is large.

We first remark that every proper $k$-coloring of $G_1$ uses $k-1$ colors on $X$, the set $Y$ is colored monochromatically with the last color, and any of the $k-1$ colors of $X$ can be used on $Z$.  This gives
\begin{equation}\label{eq-lowerbound}
P_{G_1}(k) = k! (k-1)^{n-\delta-k+1}.
\end{equation}
Our main result in this section is that (\ref{eq-lowerbound}) is asymptotically largest possible, and that any $(n,k,\delta)$-graph with at least this many colorings must have a structural decomposition similar to that of $G_1$. The optimal graphs $G^*$ in Theorem~\ref{thm:l-conn} and $G^\star$ in  Theorem~\ref{thm-mindeg1} will thus have a very similar three-part structure to $G_1$, and have slightly more $k$-colorings.

\begin{definition}
We say that a graph $G=(V,E)$ with minimum degree $\delta$ has an {\em $(X,Y,Z)$ decomposition} if $V=X\sqcup Y \sqcup Z$, $|Y| = \delta$, the induced subgraph $G[Y\cup Z]$ is the complete bipartite graph between $Y$ and $Z$, and there are no edges between $X$ and $Z$. See Figure \ref{fig-XYZdecomposition}.
\end{definition}

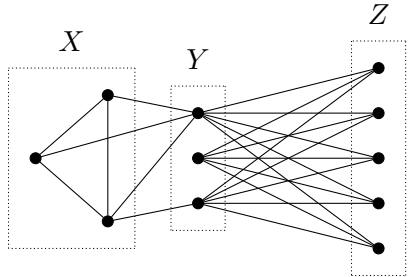
\begin{figure}[ht!]
\begin{center}
\begin{tikzpicture}[scale=1.2]

		\node (v1) at (1,.8) [circle,draw,scale=.4,fill] {};
		\node (v2) at (2,2) [circle,draw,scale=.4,fill] {};
		\node at (2,2.6) {$Y$};
		
		\draw[densely dotted] (1.7,2.3) -- (2.3,2.3);
		\draw[densely dotted] (2.3,2.3) -- (2.3,.7);
		\draw[densely dotted] (2.3,.7) -- (1.7,.7);
		\draw[densely dotted] (1.7,.7) -- (1.7,2.3);
		
		\node (v3) at (1,2.2) [circle,draw,scale=.4,fill] {};
		\draw[densely dotted] (-.1,2.5) -- (1.3,2.5);
		\draw[densely dotted] (1.3,2.5) -- (1.3,.5);
		\draw[densely dotted] (1.3,.5) -- (-.1,.5);
		\draw[densely dotted] (-.1,.5) -- (-.1,2.5);
		
		\node at (.6,2.8) {$X$};
		\node (v4) at (2,1) [circle,draw,scale=.4,fill] {};
		\node (v10) at (2,1.5) [circle,draw,scale=.4,fill] {};
		\node (v5) at (4,2) [circle,draw,scale=.4,fill] {};
		\node (v6) at (4,.5) [circle,draw,scale=.4,fill] {};
		\node (v7) at (4,1) [circle,draw,scale=.4,fill] {};
		\node (v8) at (4,2.5) [circle,draw,scale=.4,fill] {};
		\node (v0) at (4,1.5) [circle,draw,scale=.4,fill] {};
		\draw[densely dotted] (3.7,.2) -- (4.3,.2);
		\draw[densely dotted] (4.3,.2) -- (4.3,2.8);
		\draw[densely dotted] (4.3,2.8) -- (3.7,2.8);
		\draw[densely dotted] (3.7,2.8) -- (3.7,.2);
		
		\node at (4,3.1) {$Z$};
		\node (v9) at (0.2,1.5) [circle,draw,scale=.4,fill] {};

		\foreach \from/\to in {v9/v1,v9/v2,v9/v3,v1/v3,v1/v2,v2/v3,v1/v4,v4/v5,v4/v6,v4/v7,v2/v5,v2/v6,v2/v7,v2/v8,v4/v8,v0/v2,v0/v4,v10/v5,v10/v6,v10/v7,v10/v8,v10/v0}
		\draw (\from) -- (\to);
	\end{tikzpicture}
	\end{center}
	\caption{An example of an $(X,Y,Z)$ decomposition of a graph with minimum degree $\delta=3$.}
	\label{fig-XYZdecomposition}
\end{figure}

We will need two lemmas to prove such a decomposition exists. The first lemma will be used to bound the number of ways to color the internal vertices of a path whose endpoints have already been colored. It is stated in \cite{Engbers15} in terms of $H$-colorings, which for $H=K_{k}$ corresponds to a $k$-coloring.
\begin{lemma}[\cite{Engbers15}]\label{lem-paths}
Suppose that $r \geq 4$. The number of $k$-colorings of the path $P_r$ so that the endpoints are two fixed, predetermined colors is at most $((k-1)^2-1)(k-1)^{r-4}$.
\end{lemma}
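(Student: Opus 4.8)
The final statement to prove is Lemma~\ref{lem-paths}: the number of $k$-colorings of a path $P_r$ with $r \geq 4$ vertices whose two endpoints are pre-colored with fixed colors is at most $((k-1)^2 - 1)(k-1)^{r-4}$. Since the paper cites \cite{Engbers15}, the intended proof is probably a one-line appeal to that reference, but here is how I would prove it from scratch. The natural tool is the transfer matrix (transfer operator) for proper colorings of a path. Let $A = J_k - I_k$ be the $k \times k$ adjacency matrix of $K_k$ (all-ones matrix minus identity), which counts, for each ordered pair of colors $(a,b)$, whether they may be adjacent. Then the number of proper $k$-colorings of $P_r$ with endpoint colors $a$ and $b$ fixed is exactly $(A^{r-1})_{a,b}$, i.e.\ the $(a,b)$ entry of the $(r-1)$st power of $A$. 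So the lemma reduces to showing $(A^{r-1})_{a,b} \leq ((k-1)^2 - 1)(k-1)^{r-4}$ for every $a, b$ and every $r \geq 4$.

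\medskip

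\noindent\textbf{Key steps.} First, diagonalize $A$: its eigenvalues are $k-1$ (once, eigenvector $\mathbf{1}$) and $-1$ (with multiplicity $k-1$, eigenspace $\mathbf{1}^\perp$). Hence by the spectral decomposition, $A^m = (k-1)^m \frac{1}{k}J_k + (-1)^m \left(I_k - \frac{1}{k}J_k\right)$. Reading off entries: the diagonal entries of $A^m$ equal $\frac{(k-1)^m}{k} + (-1)^m\frac{k-1}{k}$, and the off-diagonal entries equal $\frac{(k-1)^m}{k} - (-1)^m\frac{1}{k} = \frac{(k-1)^m - (-1)^m}{k}$. So with $m = r-1$, the count is one of these two explicit expressions depending on whether $a = b$. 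The second step is the elementary verification that both expressions are at most $((k-1)^2-1)(k-1)^{r-4} = (k-1)^{r-2} - (k-1)^{r-4}$ for all $r \geq 4$ and $k \geq 2$ (in our setting $k \geq 4$). For the off-diagonal case one checks $\frac{(k-1)^{r-1} + 1}{k} \leq (k-1)^{r-2} - (k-1)^{r-4}$ (taking the worst sign), which after multiplying by $k$ and rearranging is a routine polynomial inequality that is slack for $r \geq 4$; the diagonal case is handled similarly (and is in fact smaller in the relevant regime, or comparably bounded). A clean alternative to the spectral computation, if one prefers to avoid eigenvalues: use the deletion--contraction / inclusion--exclusion recursion $p_r(a,b) = (k-1)p_{r-1}(a,\star) - p_{r-1}(a,b)$ type identity, or directly bound the number of colorings of the two middle vertices by $(k-1)^2 - 1$ (the $-1$ excludes the single forbidden pattern when $r=4$) and freely color the remaining $r-4$ interior vertices in at most $k-1$ ways each.

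\medskip

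\noindent\textbf{Main obstacle.} There is no serious obstacle; this is a short computation. The only mild subtlety is getting the constant exactly right rather than with slack: one must confirm that the bound $((k-1)^2-1)(k-1)^{r-4}$ is genuinely an upper bound (not just asymptotically) for the off-diagonal case, which is the larger of the two entry types, uniformly over $r \geq 4$. The inequality $(k-1)^{r-1} + 1 \leq k\big((k-1)^{r-2} - (k-1)^{r-4}\big) = (k-1)^{r-1} + (k-1)^{r-4}\big(k(k-2) - k\big)$ — wait, more carefully, $k(k-1)^{r-2} - k(k-1)^{r-4} - (k-1)^{r-1} = (k-1)^{r-2} - k(k-1)^{r-4} = (k-1)^{r-4}\big((k-1)^2 - k\big) = (k-1)^{r-4}(k^2 - 3k + 1)$, which is positive for $k \geq 3$ and grows, so we need $(k-1)^{r-4}(k^2-3k+1) \geq 1$, true for all $r \geq 4$, $k \geq 3$. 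So the lemma follows, and for the boundary case $r = 4$ one simply has the count equal to exactly $(k-1)^2 - 1$ (off-diagonal, $m=3$: $\frac{(k-1)^3 + 1}{k} = (k-1)^2 - (k-1) + 1$... let me not grind this here — the point is it matches or is below the claimed tight bound). The write-up would present the transfer-matrix identity, the spectral formula for $A^m$, and the final inequality chain.
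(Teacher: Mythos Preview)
The paper does not prove this lemma; it simply quotes it from \cite{Engbers15}. Your transfer-matrix argument is correct and in fact gives the exact count: with $A=J_k-I_k$ one has $(A^{m})_{ab}=\tfrac{1}{k}(k-1)^m+(-1)^m\bigl(\mathbf{1}_{a=b}-\tfrac{1}{k}\bigr)$, and for $m=r-1$ both the diagonal and off-diagonal values are bounded by $(k-1)^{r-2}-(k-1)^{r-4}$ once $k\ge 3$ and $r\ge 4$. The reduction you reached, namely $(k-1)^{r-4}(k^2-3k+1)\ge 1$ in the off-diagonal case (and $\ge k-1$ in the diagonal case with $r$ odd, where $r\ge 5$), is exactly right.

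Two minor comments. First, the write-up reads like scratch work (``wait'', ``let me not grind this here''); in a final version just state the spectral formula and the two one-line inequalities. Second, your ``clean alternative'' is also correct, though it deserves one more sentence of justification: greedily color $v_2,\dots,v_{r-3}$ from the $v_1$ side (a factor of $(k-1)^{r-4}$), and then for the last two interior vertices $v_{r-2},v_{r-1}$, with $c(v_{r-3})$ and $c(v_r)$ already fixed, a short case split on whether $c(v_{r-3})=c(v_r)$ shows there are at most $k^2-3k+3\le (k-1)^2-1$ completions when $k\ge 3$. Either route is perfectly adequate for what the paper needs.
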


We also use the following theorem of Erd\H{o}s and P\'osa. 
\begin{theorem}[\cite{ErdosPosa}]\label{thm-EP}
There is a function $f:\mathbb{N} \to \mathbb{R}$ such that a graph $G$ either contains $d$ disjoint cycles, or contains $f(d)$ vertices whose deletion makes the graph acyclic.
\end{theorem}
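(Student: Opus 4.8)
This is the classical theorem of Erd\H{o}s and P\'osa, so the plan is to reconstruct its standard proof; since the paper only needs the existence of $f$, I would not optimize it (the original argument gives $f(d)=O(d\log d)$, and a cruder recursion gives $f(d)=2^{O(d)}$). I would first reduce to the case of minimum degree at least $3$: a vertex of degree $\le 1$ lies on no cycle, and a vertex $v$ of degree exactly $2$ can be \emph{dissolved}, i.e.\ deleted while its two neighbours are joined by a new edge. Working with multigraphs (a loop being a cycle of length $1$, a parallel pair a cycle of length $2$), these operations change neither the maximum number of vertex-disjoint cycles nor the minimum size of a set of vertices meeting all cycles, and --- crucially --- dissolving a degree-$2$ vertex leaves every other degree unchanged; moreover the part of $G$ removed by the reduction is a disjoint union of paths and trees, so a feedback vertex set of the reduced multigraph lifts to one of the same size in $G$. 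Thus it suffices to prove: there is $h(d)$ so that every multigraph of minimum degree at least $3$ with at least $h(d)$ vertices contains $d$ vertex-disjoint cycles. (Then $f(d)=h(d)-1$ works: if the reduction of $G$ has fewer than $h(d)$ vertices, that vertex set is the desired feedback set.)

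The engine is the short-cycle lemma --- a multigraph of minimum degree at least $3$ on $m$ vertices has a cycle of length $O(\log m)$ --- which follows from a breadth-first search estimate: were the girth $g$, a ball of radius $\lfloor (g-1)/2\rfloor$ would induce a tree whose internal vertices each have at least two children, forcing $m$ to be exponential in $g$. I would then induct on $d$. In a multigraph $H$ of minimum degree at least $3$ with many vertices, take a \emph{shortest} cycle $C$; it is induced, so (for girth at least $5$) every vertex off $C$ has at most one neighbour on $C$. Delete $V(C)$ and dissolve the vertices whose degree dropped to $2$: since dissolving preserves all other degrees, there is no runaway cascade, and one obtains a multigraph $H'$ of minimum degree at least $3$ on roughly $m-|C|-|N(C)\setminus V(C)|$ vertices. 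Applying the inductive hypothesis to $H'$ with parameter $d-1$ yields either $d-1$ disjoint cycles (to which we append $C$) or a short feedback vertex set (to which we append $V(C)$), and summing the cycle lengths along the recursion bounds $h(d)$.

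The step I expect to be the main obstacle is controlling the number of vertices deleted per round, i.e.\ keeping both $|C|$ and $|N(C)\setminus V(C)|$ small relative to $m$. The first is automatic from the short-cycle lemma ($|C|=O(\log m)$); the second is the delicate point, since although $C$ has only $O(\log m)$ vertices, one of them could have enormous degree, making $N(C)$ large. The standard workaround is to treat that case separately: a vertex of very large degree, together with the minimum-degree condition, forces many vertex-disjoint short cycles directly; otherwise all degrees are bounded in terms of $d$, whence $|N(C)\setminus V(C)|=O(\log m)$ as well and the recursion closes with $h(d)=2^{O(d)}$. Everything else is routine bookkeeping.
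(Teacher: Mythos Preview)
The paper does not prove this statement; Theorem~\ref{thm-EP} is quoted from Erd\H{o}s and P\'osa and used as a black box, so there is no proof in the paper to compare against.

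Your proposal, however, has a genuine gap. The central sufficiency claim --- that it is enough to find $h(d)$ so that every multigraph of minimum degree at least $3$ on at least $h(d)$ vertices contains $d$ vertex-disjoint cycles --- is simply false. Take the wheel $W_D$: a hub $v$ joined to every vertex of a $D$-cycle $u_1\cdots u_D$. It has minimum degree $3$ and $D+1$ vertices, yet every cycle either equals the rim (and hence uses every $u_i$) or passes through $v$; thus no two cycles are vertex-disjoint, and no $h(2)$ can exist. Your dissolving reduction does nothing to $W_D$, since it already has minimum degree $3$. The proposed fix for the high-degree case fails for exactly the same reason: the hub of $W_D$ has arbitrarily large degree, but this does not produce even two disjoint cycles.

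The standard argument sidesteps this by working not with $G$ reduced to minimum degree $3$, but with a \emph{maximal subgraph} $H\subseteq G$ in which every vertex has degree $2$ or $3$ (so $H$ is a subdivision of a cubic multigraph). Maximality of $H$ forces $G$ minus the set $U$ of degree-$3$ vertices of $H$ to be acyclic, so $U$ is a feedback vertex set; and in a genuinely \emph{cubic} multigraph on $|U|$ vertices the shortest-cycle-and-delete recursion you describe does go through cleanly, because each cycle vertex has exactly one neighbour off the cycle, giving $|N(C)\setminus V(C)|\le |C|=O(\log |U|)$ with no maximum-degree hypothesis required.
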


Given a set $C$ of colored vertices in a connected graph $G$, we {\em greedily color} the remaining vertices by considering them in order of non-decreasing distance from $C$, breaking ties arbitrarily. Each new uncolored vertex $v$ has a neighbor that has already been colored, and so has at most $k-1$ choices for its color.

\subsection{Structural Lemma for Extremal Graphs}

Now, we are ready to state and prove a structural lemma for all extremal graphs with at least as many $k$-colorings as $G_1$ from (\ref{eq-lowerbound}).

\begin{lemma}\label{lem-extremal}
Suppose $\delta \ge 3$, $k\ge 4$, and $n$ is sufficiently large in terms of $\delta$ and $k$. If $G$ is an $(n,k,\delta)$-graph and
\begin{equation}\label{eq-extremal}
P_G(k) \ge k! (k-1)^{n-\delta-k+1},
\end{equation}
then $G$ has an $(X,Y,Z)$ decomposition of one of the following four types:
\begin{enumerate}
    \item $G[X]\simeq K_{k-1}$ and there is a vertex of $Y$ complete to $X$.
    \item $G[X]\simeq K_{k-1}$ and for each $x\in X$, there is a vertex of $Y$ whose neighborhood in $X$ is $X\setminus \{x\}$.
    \item $G[X]\simeq K_k$ and there is exactly one vertex of $X$ with neighbors in $Y$.
    \item $G[X]$ is $K_k$ plus a leaf $v$, $v$ is the only vertex of $X$ with neighbors in $Y$, and $v$ has at least $\delta -1$ neighbors in $Y$.
\end{enumerate}
In addition, Case 2 is only possible when $k - 1 \le \delta$, while Cases $3$ and $4$ are only possible if $k-1 \ge \delta$.
\end{lemma}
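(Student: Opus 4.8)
The plan is to show that any $(n,k,\delta)$-graph $G$ satisfying \eqref{eq-extremal} is, up to a bounded number of vertices, the graph $G_1$: a huge independent set $Z$ joined completely to a $\delta$-set $Y$, together with a remainder $X$ of bounded size. There are three stages: isolate the large bipartite part, show it is almost all of $G$, and identify the remainder.

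\smallskip
\noindent\emph{Stage 1 (a bounded feedback vertex set).} Since $G$ has minimum degree $\delta\ge 3$, if $G$ contained $d$ vertex-disjoint cycles it would have too few colourings. Indeed, contracting $d$ vertex-disjoint cycles $D_1,\dots,D_d$ to points, taking a spanning tree of the contracted graph, and lifting, one obtains a connected spanning cactus subgraph $H$ of $G$ whose only cycles are the $D_i$. The block-decomposition formula together with
\[
\frac{(k-1)^m+(-1)^m(k-1)}{k(k-1)^{m-1}}\ \le\ \frac{(k-1)+(k-1)^{-1}}{k}\ =:\ \rho\ <\ 1\qquad(m\ge 3)
\]
then gives $P_G(k)\le P_H(k)\le k(k-1)^{n-1}\rho^{\,d}$, which is smaller than $k!(k-1)^{n-\delta-k+1}$ once $d\ge d_0$ for some constant $d_0=d_0(k,\delta)$. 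Hence by Theorem~\ref{thm-EP} there is a set $W$ with $|W|\le f(d_0)=:c_0$ such that $F:=G-W$ is a forest; and since $\deg_G(v)\ge\delta$, every leaf of $F$ has $\ge\delta-1\ge 2$ neighbours in $W$ and every isolated vertex of $F$ has all $\ge\delta$ of its neighbours in $W$.

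\smallskip
\noindent\emph{Stage 2 ($F$ is almost an independent set with a common neighbourhood).} Let $A_0$ be the set of isolated vertices of $F$ and $A_1:=V(G)\setminus(W\cup A_0)$. Bound $P_G(k)$ by colouring $G[W\cup A_1]$ first and then extending to $A_0$, each $v\in A_0$ contributing $k-|\phi(N(v))|\le k-1$. The full factor $(k-1)^{|A_0|}$ is realised only by colourings $\phi$ that are monochromatic on $N(v)$ for every $v\in A_0$, and there are only boundedly many such $\phi$ --- each is constant on every component of the hypergraph $\{N(v):v\in A_0\}$ on the bounded set $W$, and each such component meets $W$ in at least $\delta$ vertices --- while any other colouring loses a factor $\tfrac{k-2}{k-1}$ per vertex of $A_0$ with non-monochromatic neighbourhood. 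At the same time the colourings of $G[W\cup A_1]$ themselves are limited, because every vertex of $A_1$ has degree $\ge\delta\ge 3$ inside $G[W\cup A_1]$: long forest-paths inside $A_1$ whose ends are pinned through $W$ are controlled by Lemma~\ref{lem-paths}, and the remaining short configurations by the cycles they form together with $W$. Running these estimates against the threshold $k!(k-1)^{n-\delta-k+1}$ forces $|A_1|\le c_1$, forces all but $\le c_1$ vertices of $A_0$ to have degree exactly $\delta$, and --- by pigeonhole over the at most $\binom{c_0}{\delta}$ possible neighbourhoods in $W$ --- forces all but $\le c_1$ of them to share one $\delta$-set $Y\subseteq W$, for some constant $c_1=c_1(k,\delta)$. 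Taking $Z$ to be that shared-neighbourhood family and $X:=V(G)\setminus(Y\cup Z)$ yields an $(X,Y,Z)$ decomposition with $|X|\le c_0+c_1$; in particular $G[Y]$ must be edgeless, since an edge inside $Y$ would force $|\phi(Y)|\ge 2$ always and hence $P_G(k)\le k^{|X|+\delta}(k-2)^{|Z|}<k!(k-1)^{n-\delta-k+1}$ for $n$ large. I expect this to be the main obstacle: making the counting tight enough to bound $|A_1|$ and the ``excess'' of $A_0$ by constants rather than merely by $o(n)$.

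\smallskip
\noindent\emph{Stage 3 (identifying $X$).} Since every $z\in Z$ has neighbourhood exactly $Y$,
\[
P_G(k)=\sum_{\phi}\bigl(k-|\phi(Y)|\bigr)^{|Z|},
\]
the sum over proper colourings $\phi$ of $G[X\cup Y]$. As $|Z|=n-\delta-|X|$ is huge, \eqref{eq-extremal} forces the terms with $|\phi(Y)|=1$ to dominate, so that $M\,(k-1)^{|Z|}\ge k!(k-1)^{n-\delta-k+1}$, i.e.
\[
M\ \ge\ k!\,(k-1)^{|X|-k+1},
\]
where $M$ counts the colourings of $G[X\cup Y]$ in which $Y$ is monochromatic. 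Since $G[Y]$ is edgeless, $M=P_{G[X]+y^{\ast}}(k)$ where $y^{\ast}$ is a new vertex adjacent to exactly those vertices of $X$ that have a neighbour in $Y$. Finally $\chi(G)=k$, and since the monochromatic-$Y$ colourings extend over $Z$ this means that either $G[X\cup Y]$ is $k$-chromatic, or it is $(k-1)$-chromatic but every one of its $(k-1)$-colourings uses all of $\{1,\dots,k-1\}$ on $Y$. Combining this with $|X|\le c_0+c_1$ and the inequality for $M$ --- which the extra degree-$\delta$ constraints on vertices of $X$ make hard to satisfy unless $X$ is almost a clique --- a routine finite case analysis on $|X|\in\{k-1,k,k+1\}$ and on the isomorphism type of $G[X]$ leaves exactly Cases 1--4, each meeting the bound on $M$ with equality. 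The side conditions are then immediate: in Case 2 the $k-1$ vertices of $Y$ supplied by the case (one for each $x\in X$, with neighbourhood $X\setminus\{x\}$) are pairwise distinct, so $\delta=|Y|\ge k-1$; in Cases 3 and 4 the $k-1$ vertices of the $K_k$ that are not adjacent to the unique vertex of $X$ meeting $Y$ have degree exactly $k-1$ in $G$, so $\delta\le k-1$.
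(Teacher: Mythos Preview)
Your overall architecture matches the paper's: Erd\H{o}s--P\'osa to get a bounded feedback vertex set, then showing almost all remaining vertices are isolated with a common $\delta$-neighbourhood $Y$, then analysing $G[X\cup Y]$. Two of the three stages, however, have genuine gaps.

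\textbf{Stage 2.} You yourself flag this as the main obstacle, and it is. Pigeonhole over $\binom{c_0}{\delta}$ neighbourhoods only gives that \emph{some} $\delta$-set $Y$ has $\Omega(n)$ common neighbours in $A_0$; it does not give that all but boundedly many share a single $Y$. For that you need an additional argument: if two distinct $\delta$-sets $Y,Y'$ each had unbounded common-neighbour sets, then colourings monochromatic on both $Y$ and $Y'$ are colourings of a contracted graph on $\le n-\delta$ vertices (bounded by Theorem~\ref{thm-FoHeMa}), while all other colourings lose a factor $\tfrac{k-2}{k-1}$ on one of the two large sets. The paper also replaces your one-shot bound on $|A_1|$ by a more structured argument: bound the number of nontrivial forest components, bound the longest path in each, bound the number of leaves of each pruned tree, and only then absorb the bounded residue into $W$. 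Your ``long forest-paths pinned through $W$ controlled by Lemma~\ref{lem-paths}'' is the right instinct but is not a proof as written.

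\textbf{Stage 3.} This is the more serious gap. You derive $M=P_{G[X\cup Y]/Y}(k)\ge k!(k-1)^{|X|-k+1}$, which is exactly right, and then assert that ``a routine finite case analysis on $|X|\in\{k-1,k,k+1\}$'' finishes. But at this point you only know $|X|\le c_0+c_1$, a constant depending on $k$ and $\delta$ that can be arbitrarily large; you have given no reason why $|X|\le k+1$. The missing step is that your inequality is precisely the equality case of Theorem~\ref{thm-FoHeMa} applied to the connected $k$-chromatic graph $G':=G[X\cup Y]/Y$ on $|X|+1$ vertices, and that theorem then forces the $2$-core of $G'$ to be a $k$-clique. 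Only \emph{after} invoking Theorem~\ref{thm-FoHeMa} can one do a short case analysis, not on $|X|$ directly, but on the position of the contracted vertex $y$ relative to the clique (whether $y\in K_k$, or at distance $1$, $2$, or $\ge 3$), using the degree-$\delta$ constraint on $X$-vertices to rule out pendant vertices outside $N(y)$. This is what collapses the possibilities to your Cases~1--4 and pins down $|X|\in\{k-1,k,k+1\}$. Without Theorem~\ref{thm-FoHeMa} the ``finite case analysis'' is over a family of graphs whose size grows with $k$ and $\delta$, and is neither finite nor routine in any uniform sense.
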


\begin{proof}[Proof of Lemma~\ref{lem-extremal}]
We determine structures that must be present in $G$; the first several claims of this proof are inspired by the first several claims of the proof of Theorem 1.1 in \cite{GuggiariScott}. Throughout, we treat $\delta$ and $k$ as constants and all implicit constants are allowed to depend on them.

\bigskip

\noindent\textbf{Claim 1:} {\em $G$ has a bounded number of disjoint cycles.}

Suppose that $G$ has $c$ disjoint cycles.  We color $G$ by fixing a vertex $v$ in $G$ and coloring it arbitrarily.  Then we properly color a shortest path from $v$ (the current set of colored vertices) to a vertex on one of the disjoint cycles; each vertex on this path must avoid the color on the previous vertex in the path.  By Lemma \ref{lem-paths}, the vertices on the cycle can be colored in at most $((k-1)^2-1)(k-1)^{t-3}$ ways, where $t$ is the number of vertices in the cycle. Using the fact that $G$ is connected, we then repeat this process of finding a shortest path from the set of colored vertices to a disjoint cycle and coloring it in at most $((k-1)^2-1)(k-1)^{t-3}$ ways.  After all cycles have been colored, we greedily color the remaining vertices; each remaining vertex has at most $k-1$ choices for its color.  Since there are $c$ disjoint cycles, this process iterates $c$ times and so we have
\[
P_{G}(k) \leq k((k-1)^2-1)^c (k-1)^{n-2c-1} \leq k(k-1)^{n-1}e^{\frac{-c}{(k-1)^2}}.
\]
If
\[
c > (k-1)^2(k+\delta-2)\log(k-1), 
\]
then $P_{G}(k) < k(k-1)^{n-k-\delta+1}$, which contradicts (\ref{eq-extremal}) (here and throughout all logarithms are base $e$). Thus, there must be at most $(k-1)^2(k+\delta-2)\log(k-1)$ disjoint cycles in $G$.

\bigskip

We now use Theorem \ref{thm-EP} to identify a set $A$ of vertices of $G$, where by the previous claim $|A|$ is at most a constant depending of $k$ and $\delta$, so that the removal of the vertices of $A$ from $G$ makes the graph acyclic.  Therefore we can partition the vertices of $G$ into a set $A$ (where $|A|$ is a constant depending on $k$ and $\delta$) and a set $F$ so that $G[F]$ is a forest.  We will simply refer to $F$ as the forest.

Say a component of this forest $F$ is {\em non-trivial} if it contains some edge. Note that for every non-trivial component $T$ of $F$, all maximal paths of $T$ will have endpoints which are leaves of $T$, and all leaves of $T$ must have at least $\delta - 1 \ge 2$ neighbors in $A$. We use these facts in the next claim.

\bigskip

\noindent \textbf{Claim 2:} {\em The forest $F$ has a bounded number of non-trivial components.}

If there are $c$ non-trivial components, we consider a maximal path in each non-trivial component.  We first color $A$ in at most $k^{|A|}$ ways.  We then iteratively color the maximal paths, where the number of colorings for one maximal path is bounded above by using Lemma \ref{lem-paths}.  Finally, we color the remaining vertices greedily.  This gives
\[
P_{G}(k) \leq k^{|A|}((k-1)^2-1)^c(k-1)^{n-|A|-2c} \leq k^{|A|}(k-1)^{n-|A|}e^{\frac{-c}{(k-1)^2}}.
\]
If $c>|A|(k-1)^2\log(k/(k-1))+(k-1)^2(k+\delta-1)\log(k-1)$, then this implies that $P_G(k) < (k-1)^{n-\delta-k+1}$. Again, this contradicts (\ref{eq-extremal}), so we know that there are at most \[
|A|(k-1)^2\log(k/(k-1))+(k-1)^2(k+\delta-1)\log(k-1)
\]
non-trivial components in the forest $F$.

\bigskip

Now let $T$ be one fixed non-trivial component of $F$, so $T$ is a tree.  Let $T'$ be the subtree obtained from $T$ by deleting all of the leaves of $T$.  We will show that the number of vertices in $T'$ is bounded by a constant that depends on $k$ and $\delta$ by using the next two claims. Specifically, we show that the length of the longest path in $T$ is bounded, and that $T'$ has a bounded number of leaves, which together show that the number of vertices in $T'$ is bounded.

\bigskip

\noindent \textbf{Claim 3:} {\em The length of the longest path in $T$ is bounded.}

Suppose that the longest path $P$ in $T$ has at least $2c$ vertices.  Label the vertices of $P$ as $u_1 v_1 u_2 v_2 \cdots$. Again, first color the vertices in $A$.  Then we color $P$ as follows. Consider each pair $u_i$ and $v_i$ together for increasing values of $i$.  Given a pair $u_i$ and $v_i$, we see that $u_i$ has a neighbor that has already been colored ($u_1$ has a neighbor in $A$, and $u_i$ for $i>1$ has $v_{i-1}$ as a neighbor).  We then take a maximum length path $Q_i$ in the tree $T$ that starts at $v_i$ and otherwise avoids $P$; we use here that $\delta \geq 3$ implies $v_i$ has a neighbor that is not in $P$. The maximality of $Q_i$ implies that the other endpoint of this path must have a neighbor in $A$. Therefore the path $u_i \cup Q_i$ can be colored in at most $((k-1)^2-1)(k-1)^{|Q_i|-1}$ ways by Lemma \ref{lem-paths}. We then proceed to the next two vertices in $P$, and in this way we color the first $2c$ vertices of $P$. Notice that $T$ being a tree implies that the paths for $Q_i$ and $Q_j$ for $i \neq j$ do not intersect.

After greedily coloring the remaining vertices on the path $P$ and also of the rest of the graph, we have
\[
P_{G}(k) \leq k^{|A|}((k-1)^2-1)^c(k-1)^{n-|A|-2c} \leq k^{|A|}(k-1)^{n-|A|}e^{\frac{-c}{(k-1)^2}}.
\]
The same analysis given in Claim 2 shows that 
\[
c \leq |A|(k-1)^2\log(k/(k-1))+(k-1)^2(k+\delta-1)\log(k-1),
\]
and so the length of the longest path in $T$ is bounded above by a constant.

\bigskip

\noindent \textbf{Claim 4:} {\em $T'$ has a bounded number of vertices.}

We first show that $T'$ has a bounded number of leaves. Suppose that $T'$ has $c$ leaves. As $\delta \geq 3$, each leaf $v$ of $T'$ has at least two neighbors outside of $T'$, and by definition of $T'$ we have that at least one neighbor of $v$ is a leaf of $T$.  Note that a leaf of $T$ has a neighbor in $A$.  If two neighbors of a leaf $v$ of $T'$ are leaves of $T$, we have a path on 5 vertices that starts and ends in $A$ with $v$ in as the middle vertex. If only one neighbor of $v$ is a leaf of $T$, then another neighbor of $v$ is in $A$, so we have a path on 4 vertices that starts and ends in $A$. Here we used the fact that every leaf of $T$ has at least $\delta -1 \ge 2$ neighbors in $A$.

We first color $A$. For each leaf of $T'$, we color along the path in $T$ on 4 or 5 vertices containing it which has endpoints in $A$. This obtains an upper bound from Lemma \ref{lem-paths}. Finally, we greedily color the remaining vertices of the graph.  From this we have 
\[
P_{G}(k) \leq k^{|A|} ((k-1)^2-1)^c (k-1)^{n-2c-|A|} \leq k^{|A|} (k-1)^{n-|A|} e^{\frac{-c}{(k-1)^2}}.
\]
Again, the analysis of Claim 2 shows that 
\[
c \leq |A|(k-1)^2\log(k/(k-1))+(k-1)^2(k+\delta-1)\log(k-1), 
\]
and so the number of leaves in $T'$ is bounded above by a constant.

Finally, note that the maximum path of $T'$ is of bounded length, so $T'$ has bounded radius. Fixing an arbitrary root, we know that the number of vertices of any given distance from the root is at most the number of leaves; this implies $|V(T')|$ is bounded.

\bigskip

Now, for every component $T$, we move the set of nonleaves $V(T')$ into $A$; in the special case of $T' = \varnothing$ then $T$ is a single edge and we move one of the two vertices into $A$. Let $L$ be the resulting set containing $A$. This partitions the vertices of $G$ into a set $L$ and a set $R$ (which is a subset of the forest $F$) so that $R$ is an independent set. The size of $L$ is absolutely bounded in terms of $\delta$ and $k$ because $|A|$ is bounded,  the number of components is bounded, and the size of each $V(T')$ is bounded. Note that all neighbors of a vertex in $R$ must be in $L$. For each $Y\in {L \choose \delta}$ we define $Z_Y$ to be the set of all common neighbors of $Y$ in $R$. By the minimum degree condition, every vertex of $R$ has at least $\delta$ neighbors in $L$, so the set family $\{Z_Y: Y \in {L \choose \delta}\}$ covers $R$.

\bigskip

\noindent \textbf{Claim 5:} {\em For all but one choice of $Y$, $|Z_Y|$ is bounded.}

Suppose $Y,Y'\in{L\choose \delta}$ with $Y \neq Y'$. We break $k$-colorings of $G$ into three types: those that have at least two colors on $Y$, those that have at least two colors on $Y'$, and those that are monochromatic on $Y$ and $Y'$.

We count colorings of the first type by first coloring $Y$ arbitarily in at most $k^\delta$ ways, then coloring each vertex of $Z_Y$ arbitrarily in at most $k-2$ ways, and finally coloring the rest of $G$ greedily. The number of such colorings is at most
\[
k^\delta (k-2)^{|Z_Y|}(k-1)^{n-|Z_Y|-\delta} \le k^\delta (k-1)^{n-\delta} e^{-|Z_Y|/(k-1)}.
\]

By the same reasoning, the number of colorings of the second type is at most $k^\delta (k-1)^{n-\delta} e^{-|Z_{Y'}|/(k-1)}$.

If there's a coloring of the third type, then both $Y$ and $Y'$ are independent sets. The number of colorings of the third type is then the same as the number of colorings of the graph  obtained by contracting each of $Y$ and $Y'$ to a vertex (which will be the same vertex if $Y$ and $Y'$ intersect). Since this new graph is still connected, has chromatic number at least $k$, and has at most $n-\delta$ vertices because $Y \neq Y'$, we have by Theorem~\ref{thm-FoHeMa} that the number of colorings of the third type is at most $k!(k-1)^{n-\delta - k}$. In total, we get
\[
P_G(k) \le k^\delta (k-1)^{n-\delta} (e^{-|Z_Y|/(k-1)} + e^{-|Z_{Y'}|/(k-1)}) + k!(k-1)^{n-\delta - k},
\]
so if $|Z_Y|$ and $|Z_{Y'}|$ are sufficiently large in terms of $\delta$ and $k$, then $G$ does not satisfy (\ref{eq-extremal}). 

\bigskip

We have shown that there is a single set $Y\in {L\choose \delta}$ for which $|Z_{Y'}|$ is bounded in terms of $\delta$ and $k$ for every $Y'\neq Y$. Note that the argument for Claim 5 also shows that $Y$ must be an independent set so that there exists colorings monochromatic on $Y$. Furthermore, since the family $\{Z_Y' : Y' \in {L\choose \delta}\}$ covers all of $R$ and $|L|$ is itself bounded in terms of $\delta$ and $k$, $|Z_Y|\ge n - O(1)$. Define 
\[
Z=Z_Y \setminus (\bigcup_{Y'\ne Y} Z_{Y'}),
\]
and define $X=V(G)\setminus(Y\cup Z)$. We claim that this triple $(X,Y,Z)$ is an $(X,Y,Z)$ decomposition of $G$. Both $Y$ and $Z$ are independent sets, so it is certainly the case that $G[Y\cup Z]$ is a complete bipartite graph between $Y$ and $Z$. Also, every vertex in $X$ is either an element of $L\setminus Y$ or an element of $R\setminus Z_Y$, so there are no edges between $X$ and $Z$. Thus this is a valid $(X,Y,Z)$ decomposition of $G$ with $|Y|=\delta$ and $|X|$ bounded in terms of $\delta$ and $k$.

Next we will apply Theorem~\ref{thm-FoHeMa} to show that $G[X]$ is essentially a $k$-clique.

\bigskip

\noindent \textbf{Claim 6:} {\em The $2$-core of $G'=G[X\cup Y]/Y$ is a $k$-clique.}

We again break the $k$-colorings of $G$ into two types: those which have at least two colors on $Y$, and those which are monochromatic on $Y$. The number of colorings of the first type can be bounded by
\[
k^{|Y|}(k-1)^{|X|} (k-2)^{|Z|} = O((k-2)^{n}),
\]
which is negligibly small compared to $P_G(k)$ for $n$ sufficiently large.

The number of colorings of $G$ monochromatic on $Y$ is the same as the number of colorings of the graph $G/Y$ where $Y$ is contracted to a vertex. This graph has chromatic number at least $k$, and all the vertices of $Z$ have degree $1$ in it. Therefore, the number of colorings of $G/Y$ is just the number of colorings of $G' = G[X\cup Y]/Y$, which must also have chromatic number at least $k$, multiplied by $(k-1)^{|Z|}$. Since $G'$ is a connected graph on $|X| + 1$ vertices with chromatic number at least $k\ge 4$, we get by Theorem~\ref{thm-FoHeMa},
\[
P_{G'}(k)\le k!(k-1)^{|X| + 1 - k},
\]
with equality if and only if the $2$-core of $G'$ is a $k$-clique.

Suppose equality doesn't hold. Then, $P_{G'}(k)\le k!(k-1)^{|X| + 1 - k} -1$, so
\[
P_G(k) = P_{G/Y}(k) + O((k-2)^{n}) \le (k!(k-1)^{|X| + 1 - k} -1) \cdot (k-1)^{|Z|}+O((k-2)^n),
\]
and since $|Z| = n - O(1)$, this inequality contradicts (\ref{eq-extremal}) for $n$ sufficiently large. Therefore equality holds and this proves the claim.

\bigskip

It remains to show that if the $2$-core of $G'$ is a $k$-clique $C$, then $G[X\cup Y]$ must be one of the four types of graphs described. We may move all vertices of $X$ which only have neighbors in $Y$ to $Z$, guaranteeing that $G[X]$ is connected. Let $y$ be the vertex in $G'$ to which $Y$ is contracted. Then, because the original graph $G$ has minimum degree $\delta$, every vertex in $G'$ except $y$ and the neighbors of $y$ must have degree at least $\delta \ge 3$.

First, suppose $y$ is a vertex of $C$. We claim that $G'\simeq K_k$. If not, $G'$ is not equal to its own $2$-core, so it contains a vertex $v$ of degree $1$. By the previous observation, $v$ must be a neighbor of $y$, and $v$ must be complete to $Y$ in the original graph $G$, and have no other edges inside $G[X\cup Y]$. But any vertex whose neighborhood is exactly $Y$ is in $Z$, by the definition of $Z$, so such a $v$ does not exist.

Thus, $G'\simeq K_k$, which implies that $G[X]\simeq K_{k-1}$. In this scenario, we show that the Cases 1 and 2 are the only possible ways for $\chi(G)=k$ to hold. Indeed, suppose Cases 1 and 2 both fail to hold, so there exists a vertex $x\in X$ such that $X \setminus \{x\}$ has no common neighbor in $Y$. Then, give a $(k-1)$-coloring to $X$ where the color of $x$ is $k-1$. By the hypothesis, it is possible to extend this coloring to $Y$ while using only the first $k-2$ colors. Finally, coloring $Z$ with $k-1$ gives a proper $(k-1)$-coloring of $G$, which is the desired contradiction.

It remains to consider the case that $y$ is not a vertex of the $k$-clique $C$, so that $C$ is contained in $G'[X]$. We further split into cases based on the distance of $y$ from $C$. 

If $y$ is distance $1$ from $C$, then we claim that there are no other vertices in $G'$ besides $y$ and the clique. If not, some such vertex $v$ would have degree at most $1$. Thus, for $v$ to have degree at least $\delta$ in $G$, it must then be adjacent to $y$, and be complete to $Y$ in $G$. By the same argument as before, $v$ would then belong to $Z$ and not to $X\cup Y$ by the definition of $Z$. It follows that $G[X]\simeq K_k$ and exactly one vertex of $X$ has neighbors in $Y$. This falls into Case 3 of the lemma.

If $y$ is distance $2$ from $C$, then by a similar argument as the previous case there can be no other vertcies in $G'$ other than $C\cup \{y,v\}$, where $v$ is the unique vertex between $y$ and $C$. In this case $v$ must have at least $\delta-1$ edges to $Y$ to have degree at least $\delta$ in $G$, so $G$ falls into Case 4 of the lemma.

The final case is when $y$ is distance $3$ or more from $C$. We claim that this is impossible. Indeed, the graph $G''=G'\setminus \{y\}\cup N(y)$ also has $2$-core equal to $C$. Also, since every vertex of $\{y\}\cup N(y)$ has distance $2$ or more from $C$, it follows that $G''$ contains at least one additional vertex. In particular, $G''$ contains a vertex $v$ of degree $1$.

But since we deleted $\{y\}\cup N(y)$ from $G'$, this means that $v$ is not adjacent to $y$ in $G'$, so its degree in $G'$ is the same as its degree in $G$. Since this degree must be at least $\delta \ge 3$, we see that $v$ has at least two neighbors $w, w'$ in $N(y)$. But then $v, w, w', y$ form a four-cycle in $G'$ disjoint from $C$, which contradicts the fact that $C$ is the $2$-core of $G'$. Thus it is impossible for $y$ to be distance $3$ or more from $C$, and the four cases above exhaust all the possibilities for $G[X\cup Y]$.

In Case 2, there must be at least $k-1$ vertices in $Y$ and so $|Y|=\delta \geq k-1$. In Cases 3 and 4, there are always vertices of degree $k-1$, and so these two cases can only occur when $k-1 \ge \delta$.
\end{proof}

We make a simple observation about how to estimate the number of $k$-colorings with an $(X,Y,Z)$ decomposition. Note that given the sizes of the parts $X,Y,Z$ in the decomposition, the only remaining information needed to determine $G$ is the induced subgraph $G[X \cup Y]$.

\begin{lemma}\label{lem-p-series}
Suppose $G$ is a graph with an $(X,Y,Z)$ decomposition. Then,
\begin{equation}\label{eq-p-series}
P_G(k) = \sum_{i=1}^{k} P^{(i)} \cdot (k-i)^{|Z|},
\end{equation}
where $P^{(i)}$ is the number $k$-colorings of $G[X\cup Y]$ using exactly $i$ colors on $Y$.
\end{lemma}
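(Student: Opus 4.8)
The plan is to partition the proper $k$-colorings of $G$ according to their restriction to $X\cup Y$ and count, for each such restriction, how many ways it extends over $Z$. First I would record the two structural consequences of an $(X,Y,Z)$ decomposition that drive everything: since $G[Y\cup Z]$ is the complete bipartite graph between $Y$ and $Z$, the set $Z$ is independent; and since in addition there are no edges between $X$ and $Z$, every vertex $z\in Z$ has neighborhood exactly $Y$ in $G$.

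Next, fix a proper $k$-coloring $c$ of $G$. Its restriction $c' = c|_{X\cup Y}$ is a proper $k$-coloring of $G[X\cup Y]$; let $S\subseteq[k]$ be the set of colors $c'$ uses on $Y$ and set $i=|S|$. For each $z\in Z$ the only constraint on $c(z)$ is that it avoid the colors on $N(z)=Y$, i.e. $c(z)\in[k]\setminus S$, which has $k-i$ elements — there are no further constraints because $z$ has no neighbors in $X$ and none in $Z$. Since $Z$ is independent, these $|Z|$ choices are mutually unconstrained, so $c$ is one of exactly $(k-i)^{|Z|}$ proper colorings of $G$ whose restriction to $X\cup Y$ equals $c'$; conversely, any assignment picking a color in $[k]\setminus S$ for each vertex of $Z$, together with $c'$, gives a proper $k$-coloring of $G$.

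Summing over all proper $k$-colorings $c'$ of $G[X\cup Y]$ and grouping them by the number $i$ of colors used on $Y$ then yields $P_G(k)=\sum_{i} P^{(i)}(k-i)^{|Z|}$, where the index runs over $1\le i\le k$: indeed $i\ge 1$ since $Y\neq\varnothing$, and $P^{(i)}=0$ automatically whenever $i>|Y|=\delta$, so extending the range up to $k$ changes nothing. There is no genuine obstacle here; the only points needing (routine) care are verifying that the restriction to $X\cup Y$ of a proper coloring of $G$ is proper for $G[X\cup Y]$, and that extending across the independent set $Z$ imposes no constraint beyond avoiding the $i$ colors present on $Y$.
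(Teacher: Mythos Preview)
Your proof is correct and follows the same approach as the paper: restrict a $k$-coloring of $G$ to $G[X\cup Y]$, observe that each vertex of $Z$ has neighborhood exactly $Y$, and count the $(k-i)^{|Z|}$ extensions when $i$ colors appear on $Y$. You have simply spelled out the details (independence of $Z$, the range of $i$) more carefully than the paper's one-paragraph argument.
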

\begin{proof}
Make each $k$-coloring of $G$ by first coloring $G[X\cup Y]$, then extending to $Z$ greedily. Since every vertex of $Z$ has neighborhood $Y$, it follows that the number of ways to extend a given coloring to $Z$ is exactly $(k-i)^{|Z|}$ if exactly $i$ colors already appear in $Y$.
\end{proof}

Due to Lemma~\ref{lem-extremal}, the extremal graphs we need to consider all have an $(X,Y,Z)$ decomposition where $|X|$ and $|Y|$ are bounded in terms of $\delta$ and $k$. Thus, the $i$-th term in the series (\ref{eq-p-series}) grows like $\Theta((k-i)^n)$ when $k,\delta$ are fixed and $n\rightarrow \infty$. 

\begin{notation}
When $G$ has an $(X,Y,Z)$ decomposition, we write $P_G^{(i)}(k)=P^{(i)}\cdot (k-i)^{|Z|}$ for the $i$-th order term, where $P^{(i)}$ is as defined in Lemma~\ref{lem-p-series}.
\end{notation}

Using Lemma~\ref{lem-extremal}, we now can not only show that (\ref{eq-extremal}) is asymptotically best possible, but also compute the optimal second-order term and estimate the optimal third-order term.

\subsection{The Second-Order Terms}
In the previous section, we determined the first-order terms of $P_G(k)$ for the four types of extremal graphs corresponding to the cases in Lemma~\ref{lem-extremal}.

\begin{definition}
A graph is of {\em Type $t$} if it satisfies the conditions of Case $t$ in Lemma~\ref{lem-extremal} for $t=1,2,3,4$, and we say that the graph is {\em maximum of Type $t$} if it has the most $k$-colorings amongst all $(n,k,\delta)$-graphs of Type $t$.
\end{definition}

We now compare the second-order terms $P_G^{(2)}(k)$ for these four types of graphs and show that when they exist, maximum graphs of Type 4 have the most $k$-colorings, and otherwise maximum graphs of Type 1 or 2 have the most.

\begin{lemma}\label{lem:second-order}
Suppose $\delta \ge 3$, $k\ge 4$, and $n$ is sufficiently large in terms of $\delta$ and $k$. If $G$ is an $(n,k,\delta)$-graph, then the second order term of $P_G(k)$ grows as follows.
\begin{enumerate}
    \item If $G$ is maximum of Type $1$, then
        \[ P_G^{(2)}(k) = k!(2^{\min(k-2, \delta - 1)} - 1)(k-1)(k-2)^{n-\delta -k + 1}.\]
    \item If $k-1 \le \delta$ and $G$ is maximum of Type $2$, then
        \[ P_G^{(2)}(k) = k!(2^{\min(k-2, \delta - k + 2)} - 1) (k-1)(k-2)^{n-\delta-k+1}.\]
    \item If $k-1 \ge \delta$ and $G$ is maximum of Type $3$, then
        \[ P_G^{(2)}(k) = k!(2^{\delta - 1} - 1)(k-1)^2(k-2)^{n-\delta - k}. \]
    \item If $k-1 \ge \delta$ and $G$ is maximum of Type $4$, then
        \[ P_G^{(2)}(k) = k!(2^{\delta -1} - 1)(k-1)^2(k-2)^{n-\delta-k} + k!(k-1)^2(k-2)^{n-\delta-k-1}. \]
\end{enumerate}
\end{lemma}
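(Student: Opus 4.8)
Our plan is to reduce, for each of the four types, the maximization of $P_G^{(2)}(k)$ to a short combinatorial optimization over the bipartite graph between $X$ and $Y$. By Lemma~\ref{lem-p-series} and the notation above, $P_G^{(2)}(k)=P^{(2)}(k-2)^{|Z|}$, where $P^{(2)}$ is the number of $k$-colorings of $G[X\cup Y]$ using exactly two colors on $Y$. For a graph of Type $t$ the subgraph $G[X]$ is determined up to isomorphism --- a $(k-1)$-clique for Types $1$ and $2$, a $k$-clique for Type $3$, a $k$-clique with a pendant leaf for Type $4$ --- so $|X|$, and hence $|Z|=n-\delta-|X|$, depends only on $t$; this already accounts for the powers $(k-2)^{n-\delta-k+1}$ in Types $1,2$ and $(k-2)^{n-\delta-k}$, $(k-2)^{n-\delta-k-1}$ in Types $3,4$. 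It remains, for each $t$, to maximize $P^{(2)}$ over the bipartite graphs from $X$ to $Y$ that keep $G$ of Type $t$, of minimum degree $\delta$, and $k$-chromatic.

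The key step is a factorization of $P^{(2)}$ obtained by coloring $G[X]$ first. Since $G[X]$ is a clique (possibly with one pendant vertex), once it is colored the palette available to a vertex $y'\in Y$ is $[k]$ minus the colors on $N(y')\cap X$, and the clique structure pins down the size of that forbidden set; restricting to the two colors used on $Y$ then leaves each $y'$ with one or two admissible colors. Hence, for a fixed coloring of $G[X]$ and a fixed unordered color pair on $Y$, the number of valid colorings of $Y$ is $2^m-1$ (or $2^m-2$ in the Type-$3$/$4$ case, where no vertex of $Y$ is pre-forced), with $m$ the number of vertices of $Y$ having the full pair as palette. Summing over the coloring of $G[X]$ and over the color pair, and exploiting the symmetry among colors, collapses this to a closed form; for Type $1$, with $y_0\in Y$ complete to $X$ and $d(x):=|(Y\setminus\{y_0\})\setminus N(x)|$, it reads
\[
P^{(2)}=k!\sum_{x\in X}\bigl(2^{d(x)}-1\bigr).
\]
For Types $3$ and $4$, where $G[X]$ already uses all $k$ colors, one gets additionally a ``bulk'' of colorings whose color pair avoids the color of the unique vertex of $X$ meeting $Y$ (plus an extra pendant term in Type $4$); for Type $2$ one first discards, using $k\ge 4$, every color pair not containing the color omitted on $X$ --- else some $y_x$ has no admissible color --- and then runs the same argument.

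The upper bound in each case comes from the minimum-degree hypothesis, which caps how many non-neighbors in $Y$ a clique vertex may have: a clique vertex already has $|X|-1$ neighbors inside $X$, so $\deg\ge\delta$ forces it to have at least $\delta-|X|+1$ neighbors in $Y$ once $\delta>|X|-1$. In Type $1$ this gives $d(x)\le k-2$, which with the trivial $d(x)\le\delta-1$ yields $P^{(2)}\le k!(k-1)(2^{\min(k-2,\delta-1)}-1)$; in Type $2$ the analogous count gives exponent $\min(k-2,\delta-k+2)$ after separating the regimes $\delta\le 2k-4$ and $\delta\ge 2k-4$; and in Types $3,4$, where the hypothesis $\delta\le k-1$ leaves the minimum-degree constraint slack on $G[X]$, the exponents are governed only by $|Y|=\delta$ and everything reduces to a factor $2^{\delta-1}-1$. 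Each bound is met by an explicit graph: put exactly the number of vertices of $Y$ complete to $X$ needed to make the minimum degree tight on the clique vertices, and let the remaining vertices of $Y$ have no neighbors in $X$. One checks directly that this graph is of the prescribed type, has minimum degree $\delta$, is connected, and is $k$-chromatic --- for Types $1,3,4$ because it contains $K_k$, and for Type $2$ because the $y_x$'s place all $k-1$ colors of $X$ on the common neighborhood of $Z$, which rules out a $(k-1)$-coloring.

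The main obstacle will be the bookkeeping for Type $2$: there the structure ``each $x$ misses exactly one vertex of $Y$'' restricts colorings of $Y$ much more tightly (one must discard the bad color pairs and track which clique vertices can carry the second color), and the minimum-degree count has to be pushed through the two regimes of $\delta$ versus $2k-4$ to land exactly on $\min(k-2,\delta-k+2)$. Checking that each extremal construction meets the minimum degree with equality, and that the Type $2$ construction is $k$-chromatic, are the remaining points that require care; the rest is routine arithmetic.
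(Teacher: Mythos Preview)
Your approach is the paper's: reduce via Lemma~\ref{lem-p-series} to maximizing $P^{(2)}$ over the $X$--$Y$ bipartite graph, color the clique $G[X]$ first, read off $P^{(2)}$ from the sizes $|N_Y(x)|$, and bound those sizes via the minimum-degree constraint. The paper streamlines one step you leave implicit---that the $P_G(k)$-maximum of Type $t$ also achieves the $P^{(2)}$-maximum---by noting that deleting an $X$--$Y$ edge never decreases $P_G(k)$, so the $P_G(k)$-maximum is automatically edge-minimal within its type; it then simply computes $P^{(2)}$ for that edge-minimal graph rather than writing down your general formula and optimizing.

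There is one genuine slip. Your single construction ``make some vertices of $Y$ complete to $X$ and give the rest no $X$-neighbours'' only yields Type~1 graphs. A Type~2 graph must contain, for each $x\in X$, a vertex $y_x$ with $N(y_x)\cap X = X\setminus\{x\}$; your construction has no such vertex. In Types~3 and~4 only one distinguished vertex of $X$ may have $Y$-neighbours, so a vertex of $Y$ complete to $X$ is forbidden outright---and with no $X$--$Y$ edges your Type~3 graph is not even connected. The fixes are easy (for Type~2, take the $y_x$'s plus $\max(0,\delta-2k+4)$ vertices complete to $X$; for Type~3, a single edge from $x_1$ to $Y$; for Type~4, give the leaf $v$ exactly $\delta-1$ neighbours in $Y$), but as written your lower-bound witnesses for Types~2--4 are not of the required type. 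Relatedly, your parenthetical ``each $x$ misses exactly one vertex of $Y$'' misreads Type~2: the definition says each $x$ is missed by some $y_x$, not that each $x$ has a unique non-neighbour in $Y$.
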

\begin{proof}
If $G$ is maximum of Type $t$, $t=1,2,3,4$, observe that the type of the graph determines all of its edges except those between $X$ and $Y$. Also, no proper spanning subgraph of $G$ can be a graph of Type $t$, since removing edges from $G$ always increases $P_G(k)$. Finally, every graph of each type is already guaranteed to have $n$ vertices, chromatic number $k$, and first order term $P_G^{(1)}(k) = k!(k-1)^{n-\delta - k + 1}$. These three observations together are enough to determine the graphs of each type that maximize the second-order term.

By Lemma~\ref{lem-p-series}, it suffices to maximize the number of $k$-colorings of $G[X\cup Y]$ which have exactly two colors in $Y$, subject to the constraint that the minimum degree is at least $\delta$.

\bigskip

\noindent \textbf{Type $1$.} {\em $G[X]\simeq K_{k-1}$ and there is a vertex of $Y$ complete to $X$.}

Here $|Z| = n-\delta-k+1$.

Let $y_1$ be a vertex of $Y$ complete to $X$.

Suppose first $k - 1 \ge \delta$. Then, the Type $1$ graph $G_1$ whose only edges between $X$ and $Y$ are the edges incident to $y_1$ already has minimum degree $\delta$. Since this is a spanning subgraph of every other graph of Type $1$, it is the Type $1$ graph maximizing $P_G(k)$.

The number of $k$-colorings of $G[X\cup Y]$ with exactly two colors on $Y$ can be computed by arbitrarily coloring $X$ in $k!$ ways, and then picking the two colors to appear on $Y$. The color of $y_1$ is determined to be the unique color not appearing in $X$. There are $k-1$ choices of the other color to appear in $Y$, and $2^{\delta-1} - 1$ ways to color the $\delta - 1$ remaining vertices of $Y$ so that both colors appear. Thus,

\[
P_G^{(2)}(k) = k!(k-1)(2^{\delta - 1} - 1) \cdot (k-2)^{|Z|} = k!(2^{\delta - 1} - 1)(k-1)(k-2)^{n-\delta - k + 1},
\]
as desired.

If $k-1 < \delta$, then we can remove edges from between $X$ and $Y$ until every vertex in $X$ has degree $\delta$. Notice that since vertices of $Y$ have degree at least $|Z| = n-k-\delta+1\ge \delta$ as $n$ is sufficiently large, so the entire graph still has minimum degree $\delta$. Let $S_1,\ldots, S_{k-1}\subseteq Y$ be the neighborhoods within $Y$ of the $k-1$ vertices $x_1,\ldots ,x_{k-1}$ in $X$. To $k$-color $G[X\cup Y]$, we again have $k!$ ways to pick the colors on $X$ and $k-1$ ways to pick the second color that appears in $Y$. If it is the color of $x_i$, then the colors of the vertices in $S_i$ are determined. There remains $2^{\delta - |S_i|} - 1$ ways to color the vertices of $Y\setminus S_i$. Since $|S_i| = \delta - k + 2$, we have

\[
P_G^{(2)}(k) = k!(2^{k-2} - 1)(k-1)(k-2)^{n-\delta - k + 1}
\]
in this case.
\bigskip

\noindent \textbf{Type $2$.} {\em $G[X]\simeq K_{k-1}$ and for each $x\in X$, there is a vertex of $Y$ whose neighborhood in $X$ is $X\setminus \{x\}$.}

Here $|Z| = n-\delta-k+1$.

Let the vertices of $X$ be $x_1,\ldots, x_{k-1}$ and let their neighborhoods within $Y$ be $S_1,\ldots, S_{k-1}$. For each $i$, $|S_i|\ge k-2$ by the definition of Type $2$ graphs. By a similar argument as for Type $1$ graphs, we can delete edges until $|S_i| = \max(k-2, \delta - k+2)$. 

Color $X$ in $k!$ ways. Since $\delta \geq 3$, the color not used on $X$ must appear in $Y$ and we have $k-1$ ways to choose the other color for $Y$. Once this other color is chosen, there are $2^{\delta - |Y_i|}$ ways to color the rest of $Y$. Thus,
\[
P_G^{(2)}(k) = k!(k-1)(2^{\delta - \max(k-2, \delta-k+2)} - 1)\cdot(k-2)^{|Z|} = k!(2^{\min(k-2, \delta - k + 2)} - 1) \cdot (k-1)(k-2)^{n-\delta-k+1}
\]
as desired.
\bigskip

\noindent \textbf{Type $3$.} {\em $G[X]\simeq K_k$ and there is exactly one vertex of $X$ with neighbors in $Y$.}

Here $|Z| = n-\delta-k$.

In this case the maximum will be achieved when there is exactly one edge $(x_1, y_1)$ between $X$ and $Y$. Color $X$ in $k!$ ways, and pick a color for $y_1$ in one of $k-1$ ways. There are $k-1$ choices for the other color to appear on $Y$, and for each such choice $2^{\delta -1} - 1$ ways to color the rest of $Y$. Thus,
\[
P_G^{(2)}(k) = k!(2^{\delta - 1} - 1)(k-1)^2(k-2)^{n-\delta - k}.
\]

\bigskip

\noindent \textbf{Type $4$.} {\em $G[X]$ is $K_k$ plus a leaf $v$, $v$ is the only vertex of $X$ with neighbors in $Y$, and $v$ has at least $\delta -1$ neighbors in $Y$.}

Here $|Z| = n-\delta-k-1$.

The maximum is attained when $v$ has exactly $\delta -1$ neighbors in $Y$. Let $y_1$ be the unique vertex in $Y$ not adjacent to $v$. Again, we count colorings of $G[X\cup Y]$ for which two colors appear on $Y$. Color $X$ in $k!$ ways and $v$ in $k-1$ ways, and let $c(v)$ be the color used on $v$.

First, we count the number of ways to color $Y$ in two colors where $c(v)$ is not used. In this case, we can pick any two colors in ${k-1 \choose 2}$ ways, and color $Y$ in a total of $2^\delta - 2$ ways.

On the other hand, if $c(v)$ is used, then it can only be used on $y_1$. We pick the other color that appears out of $k-1$ remaining colors, and color the rest of $Y$ all in that color.

In total,
\begin{align*}
P_G(k) & = k!(k-1)\Big[{k-1 \choose 2} (2^\delta - 2) + (k-1)\Big](k-2)^{n-\delta-k-1} \\
& = k!(2^{\delta -1} - 1)(k-1)^2(k-2)^{n-\delta-k} + k!(k-1)^2(k-2)^{n-\delta-k-1}.
\end{align*}
\end{proof}

The proof of Theorem \ref{thm-mindeg1} is now immediate. 

\begin{proof}[Proof of Theorem \ref{thm-mindeg1}]
This is a direct corollary of Lemma \ref{lem:second-order}. The maximizing graph of Type 4 is isomorphic to the graph $G^\star$ described.
\end{proof}

\subsection{The Third-Order Term}
It remains to study the maximum graphs of Types $1$ and $2$ in the case $k-1 < \delta$.

We will need the following combinatorial optimization problem. Recall that $S_i \triangle S_j$ is the symmetric difference of two sets $S_i$ and $S_j$.

\begin{definition}
If $r\ge 1, s\ge t \ge 1$, and $S_1,\ldots, S_r\in {[s] \choose t}$, define
\[
c(S_1,\ldots ,S_r) = \sum_{1\le i < j \le r} 3^{s - |S_i \cup S_j|} \cdot 2^{|S_i \triangle S_j|}.
\]
Let $c(r, s, t)$ be the maximum value of $c(S_1,\ldots ,S_r)$ over all possible choices of $S_1,\ldots, S_r \in {[s] \choose t}$.
\end{definition}

Suppose $G$ is of Type $1$ or $2$. Then $G[X]\simeq K_{k-1}$ and $G$ is uniquely determined by the $k-1$ sets $N(x_i)\cap Y$, where $x_i\in X$. We identify $Y$ with the set of positive integers $[\delta]$, and define the \textit{signature} of $G$ to be the sequence of $k-1$ subsets $N(x_i)\cap Y$ for $i \in [k-1]$. 

\begin{lemma}\label{lem:signature}
Suppose $\delta \ge k\ge 4$, $n$ is sufficiently large in terms of $\delta$ and $k$, $G$ is an $(n,k,\delta)$-graph that is maximum of Type $1$ or $2$, and $S_1,\ldots, S_{k-1}$ is the signature of $G$ considered as a sequence of subsets of $[\delta]$. Unless $k=4$ and $G$ is of Type $2$,
\[
P_G^{(3)}(k) = \Big[c(S_1,\ldots, S_{k-1}) - (k-1)(k-2)\cdot 2^{k-2} + {k-1 \choose 2}\Big](k-3)^{n-k-\delta +1}.
\]
\end{lemma}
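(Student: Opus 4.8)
The plan is to compute $P^{(3)}$, the number of $k$-colorings of $G[X\cup Y]$ using exactly three colors on $Y$, directly from the signature, and then conclude via $P_G^{(3)}(k)=P^{(3)}\cdot(k-3)^{|Z|}$ with $|Z|=n-k-\delta+1$. Since $G[X]\simeq K_{k-1}$ in Types $1$ and $2$, every proper $k$-coloring of $G[X\cup Y]$ assigns the vertices of $X$ all $k-1$ distinct colors, leaving one missing color $m$; there are $k!$ colorings of $X$, and by symmetry of the color set the number of extensions to $Y$ that use a prescribed number of colors is the same for each. So I fix one such coloring, writing $\sigma(i)$ for the color of $x_i$; then a vertex $y\in Y$ is forbidden exactly the colors $\{\sigma(i):y\in S_i\}$, and in particular $m$ is available at every vertex of $Y$.

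The first key step is to show that, under the hypothesis, the three colors appearing on $Y$ always include $m$. In Type $1$ this is immediate, since some vertex of $Y$ is complete to $X$ and hence forced to receive $m$. In Type $2$ with $k\ge 5$, for each $i$ there is a vertex $y^{(i)}\in Y$ with $N(y^{(i)})\cap X=X\setminus\{x_i\}$, whose only available colors are $m$ and $\sigma(i)$; were the three colors on $Y$ a $3$-subset of $\operatorname{Im}(\sigma)$, we could pick an index $d$ outside it (possible since $|\operatorname{Im}(\sigma)|=k-1\ge 4$) and $y^{(d)}$ would be uncolorable. When $k=4$ and $G$ is of Type $2$ this argument breaks down precisely because $\operatorname{Im}(\sigma)$ is itself an admissible three-color palette, which is exactly the case excluded in the statement. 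Thus in every non-exceptional case the palette on $Y$ has the form $\{m,\sigma(a),\sigma(b)\}$ for a pair $\{a,b\}\subseteq[k-1]$.

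Next, for a fixed pair $\{a,b\}$ I would count colorings of $Y$ with palette exactly $\{m,\sigma(a),\sigma(b)\}$ by inclusion--exclusion over subsets of this palette. Classifying each $y\in Y$ by which of $S_a,S_b$ it lies in gives: the number of colorings of $Y$ with colors in $\{m,\sigma(a),\sigma(b)\}$ is $3^{\delta-|S_a\cup S_b|}2^{|S_a\triangle S_b|}$; the number with colors in $\{m,\sigma(a)\}$ (resp.\ $\{m,\sigma(b)\}$) is $2^{\delta-|S_a|}$ (resp.\ $2^{\delta-|S_b|}$); no coloring uses colors only in $\{\sigma(a),\sigma(b)\}$ (the complete-to-$X$ vertex in Type $1$, a suitable $y^{(d)}$ in Type $2$, blocks it) or only in a single $\sigma(i)$ (as $|S_i|>0$); and exactly one uses only $m$. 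Möbius inversion then yields that the count for palette exactly $\{m,\sigma(a),\sigma(b)\}$ is $3^{\delta-|S_a\cup S_b|}2^{|S_a\triangle S_b|}-2^{\delta-|S_a|}-2^{\delta-|S_b|}+1$. Summing over all $\binom{k-1}{2}$ pairs, multiplying by the $k!$ colorings of $X$, and using that each $S_i$ lies in $k-2$ of the pairs gives
\[
P^{(3)}=k!\Bigl(c(S_1,\dots,S_{k-1})-(k-2)\sum_{i=1}^{k-1}2^{\delta-|S_i|}+\tbinom{k-1}{2}\Bigr).
\]
Finally, since $G$ is maximum of its type it has no removable $X$--$Y$ edge, so each $x_i$ has degree exactly $\delta$ and hence $|S_i|=\delta-(k-2)$ for every $i$ (the vertices of $Y$ keep degree $\ge\delta$ because $|Z|$ is large; in the Type $2$ case this is consistent with the structural bound $|S_i|\ge k-2$ exactly in the regime $\delta\ge 2k-4$ where Type $2$ can match the second-order term of Type $1$, cf.\ Lemma~\ref{lem:second-order}). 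Thus $\sum_i 2^{\delta-|S_i|}=(k-1)2^{k-2}$, and multiplying through by $(k-3)^{|Z|}$ gives the stated expression for $P_G^{(3)}(k)$ (up to the overall factor $k!$ counting the proper colorings of the clique $G[X]$).

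I expect the main obstacle to be the first step: carefully determining which color-palettes on $Y$ are realizable and verifying that the only loss occurs for $k=4$ with $G$ of Type $2$. The remainder is a routine but bookkeeping-heavy inclusion--exclusion, with the one external input that the second-order analysis of Lemma~\ref{lem:second-order} forces the extremal graph to realize the minimum possible $|S_i|$, so that the remaining freedom among second-order-optimal graphs is exactly the quantity measured by $c(S_1,\dots,S_{k-1})$.
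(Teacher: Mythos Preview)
Your proposal is correct and follows essentially the same approach as the paper's proof: fix a coloring of $X$, argue that the missing color $m$ must lie in the three-color palette on $Y$ (immediately for Type~1, via the vertices $y^{(d)}$ for Type~2 with $k\ge 5$), and then compute the count for each palette $\{m,\sigma(a),\sigma(b)\}$ by inclusion--exclusion, summing over pairs. Your treatment is in fact slightly more explicit than the paper's---you spell out all eight terms of the M\"obius inversion and verify that the subsets not containing $m$ contribute zero, whereas the paper simply drops those terms after noting that $k$ must appear---and you correctly flag both the $k!$ normalization and the fact that $|S_i|=\delta-k+2$ for maximum Type~2 graphs only holds in the regime $\delta\ge 2k-4$.
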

\begin{proof}
By Lemma~\ref{lem-p-series}, $P_G^{(3)}(k) = P^{(3)}(k-3)^{n-k-\delta+1}$, where $P^{(3)}$ is the number of $k$-colorings of $G[X\cup Y]$ using exactly $3$ colors on $Y$. Since $G$ is a maximum graph of Type $1$ or $2$, we may assume that the degree of every vertex in $X$ is exactly $\delta$, and so $|S_i| = \delta - k +2$ for all $i$.

We condition on the three colors that appear on $Y$. 

Let the colors of $G[X]\simeq K_{k-1}$ be $1,\ldots, k-1$ and let $k$ be the distinguished color missing from $X$. We claim that if at most $3$ colors appear on $Y$, then color $k$ must appear on $Y$. Indeed, if $G$ is of Type $1$, then there is a vertex of $Y$ complete to $X$ so that vertex must have color $k$. On the other hand, if $G$ is of Type $2$, then for each vertex $x_i$ of $X$ there is a vertex $y_i$ of $Y$ whose neighborhood in $X$ is $X-\{x_i\}$. If color $k$ does not appear on $Y$, then the only choice for the color of $y_i$ is the color of $x_i$, so all $k-1$ other colors appear on $Y$. Since we assumed that if $G$ is of Type $2$ then $k\ne 4$, this means that at least $k-1\ge 4$ colors appear on $Y$, contradiction. Thus color $k$ is one of the three colors that must appear on $Y$.

Suppose the three colors that appear on $Y$ are $i$, $j$, and our distinguished color $k$. We now compute $P^{(3)}$ using inclusion-exclusion. Let $P^{C}$ be the number of ways to color $G[X\cup Y]$ using only the colors in a set $C\subseteq[k]$ to color $Y$. Then,
\begin{equation}\label{eq-PIE}
P^{(3)} = \sum_{1\le i < j \le k-1} (P^{\{i,j,k\}} - P^{\{i,k\}} - P^{\{j,k\}} + P^{\{k\}})
\end{equation}
by the principle of inclusion-exclusion. Note that every term contains $k$ because $k$ must appear on $Y$. It is easy to see that $P^{\{k\}} = 1$, and that $P^{\{i,k\}} = 2^{\delta - |S_i|} = 2^{k-2}$ for all $i$. Finally, note that there are $3$ available colors for each vertex outside $S_i \cup S_j$, $2$ colors for vertices in exactly one of them, and one color for vertices in both, so
\[
P^{\{i, j, k\}} = 3^{\delta - |S_i\cup S_j|}\cdot 2^{|S_i \triangle S_j|}.
\]
Plugging into (\ref{eq-PIE}) and summing over all $i, j$, this completes the proof.
\end{proof}

It follows that the optimal graphs of either type must maximize $c(S_1,\ldots, S_{k-1})$. Of course, they are subject to additional constraints: for graphs of Type $1$, the intersection of all the $S_i$ is nonempty, and for graphs of Type $2$, there is a distinct element $y_i$ in each of the $(k-2)$-fold intersections of the $S_i$. Taking these constraints into account, we can reduce the problem into solving for $c(r,s,t)$.

\begin{lemma}\label{lem-mindegthirdorder}
 Suppose $2k-4 > \delta \ge k\ge 4$ or $2k-4=\delta$ and $k \geq 5$, and $n$ is sufficiently large in terms of $\delta$ and $k$. If $G$ is an $(n,k,\delta)$-maximum graph, then $G$ is of Type $1$ and
\[
P_G^{(3)}(k) = \Big[c(k-1, \delta - 1, \delta - k + 1) - (k-1)(k-2)\cdot 2^{k-2} + {k-1 \choose 2}\Big](k-3)^{n-k-\delta +1}.
\]
\end{lemma}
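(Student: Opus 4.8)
The plan is to combine the structural dichotomy of Lemma~\ref{lem-extremal} with the order-by-order estimates of Lemmas~\ref{lem:second-order} and~\ref{lem:signature}. Note first that the hypotheses force $\delta \ge k \ge 5$: if $k=4$ then the first clause requires $4>\delta\ge 4$, which is impossible, and the second clause explicitly assumes $k\ge 5$. In particular $\delta>k-1$, so any $(n,k,\delta)$-maximum graph $G$ — which satisfies (\ref{eq-extremal}) since the graph of (\ref{eq-lowerbound}) attains the right-hand side — has, by Lemma~\ref{lem-extremal}, an $(X,Y,Z)$ decomposition of Type~$1$ or Type~$2$, with Types~$3$ and~$4$ excluded because they require $k-1\ge\delta$. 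I would then observe that in either type we may assume every vertex of $X$ has degree exactly $\delta$: if some $x_i\in X$ had larger degree, deleting one of its edges to $Y$ that is not needed to certify the type (not incident to a vertex of $Y$ complete to $X$, in Type~$1$; not incident to one of the prescribed vertices $y_j$ with $N(y_j)\cap X=X\setminus\{x_j\}$, in Type~$2$) strictly increases $P_G(k)$ and preserves the type, contradicting maximality. Hence $|N(x_i)\cap Y|=\delta-k+2$ for every $x_i\in X$, and $|Z|=n-\delta-k+1$.

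The first main step is to show $G$ is of Type~$1$, by comparing the maxima $M_1,M_2$ of $P_G(k)$ over the two types (the overall maximum being $\max(M_1,M_2)$). By Lemma~\ref{lem-p-series} both types have first-order term $k!(k-1)^{n-\delta-k+1}$, and by Lemma~\ref{lem:second-order} (using $\delta\ge k$, so $\min(k-2,\delta-1)=k-2$) the Type~$1$ maximizer has second-order coefficient $2^{k-2}-1$, while the Type~$2$ maximizer has $2^{\min(k-2,\delta-k+2)}-1$. If $\delta<2k-4$ then $\delta-k+2<k-2$, the Type~$1$ second-order term strictly dominates, and since both are $\Theta((k-2)^n)$ with the remaining terms $O((k-3)^n)$ we get $M_1>M_2$ for large $n$. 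If $\delta=2k-4$ the two types agree through second order, and I would break the tie with the third-order term via Lemma~\ref{lem:signature} (applicable since $k\ge5$): the Type~$2$ maximizer's signature is forced, because $|N(x_i)\cap Y|=\delta-k+2=k-2$ must coincide with the $k-2$ vertices $\{y_j:j\ne i\}$ it is required to contain, whence (each pair has $|S_i\cup S_j|=k-1$, $|S_i\triangle S_j|=2$) $c(S_1,\dots,S_{k-1})=\binom{k-1}{2}\,4\cdot 3^{k-3}$. I would then exhibit a Type~$1$ signature with strictly larger $c$-value: with the common vertex of $Y$ complete to $X$ stripped off, take $S_1'=\dots=S_{k-2}'=\{1,\dots,k-3\}$ and $S_{k-1}'=\{k-2,\dots,2k-6\}$ inside $[\delta-1]=[2k-5]$, so that $\binom{k-2}{2}$ pairs contribute $3^{k-2}$ each and $k-2$ pairs contribute $3\cdot 4^{k-3}$ each; the inequality $\binom{k-2}{2}3^{k-2}+(k-2)\,3\cdot 4^{k-3}>\binom{k-1}{2}\,4\cdot 3^{k-3}$ reduces to $6(4/3)^{k-3}>k+5$, which holds at $k=5$ and is preserved for all larger $k$. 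By Lemma~\ref{lem:signature} this makes the Type~$1$ maximizer's third-order term exceed that of Type~$2$, so $M_1>M_2$ here too, and every maximum graph is of Type~$1$. This last verification — producing an explicit Type~$1$ configuration that beats the forced Type~$2$ value uniformly over $k\ge5$ — is the step I expect to be the main obstacle, since the inequality is barely true at $k=5$ (other natural constructions, e.g.\ sliding intervals, also work but are messier to analyze).

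Finally, to extract $P_G^{(3)}(k)$ once $G$ is known to be of Type~$1$: all Type~$1$ graphs with $|N(x_i)\cap Y|=\delta-k+2$ share the same first- and second-order terms by Lemma~\ref{lem:second-order}, so for large $n$ the maximizer is exactly the one maximizing $c(S_1,\dots,S_{k-1})$ over signatures with $|S_i|=\delta-k+2$ and $\bigcap_i S_i\ne\varnothing$ (a Type~$1$ graph with strictly smaller $c$-value would be beaten in the third-order term, hence in $P_G(k)$). Choosing a common vertex $y^\ast\in Y$ complete to $X$ and writing $S_i=S_i'\cup\{y^\ast\}$ with $S_i'\subseteq Y\setminus\{y^\ast\}$ of size $\delta-k+1$ (so $|Y\setminus\{y^\ast\}|=\delta-1$), the identity $3^{\delta-|S_i\cup S_j|}2^{|S_i\triangle S_j|}=3^{(\delta-1)-|S_i'\cup S_j'|}2^{|S_i'\triangle S_j'|}$, valid because $y^\ast\in S_i\cap S_j$, shows that this maximum equals $c(k-1,\delta-1,\delta-k+1)$. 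Substituting into Lemma~\ref{lem:signature} then yields exactly $P_G^{(3)}(k)=\big[c(k-1,\delta-1,\delta-k+1)-(k-1)(k-2)2^{k-2}+\binom{k-1}{2}\big](k-3)^{n-k-\delta+1}$, as claimed.
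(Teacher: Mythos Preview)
Your proof is correct and follows essentially the same approach as the paper: reduce to Types~$1$ and~$2$ via Lemma~\ref{lem-extremal}, eliminate Type~$2$ using the second-order comparison of Lemma~\ref{lem:second-order} when $\delta<2k-4$, and for $\delta=2k-4$ break the tie at third order by exhibiting a Type~$1$ signature with strictly larger $c$-value than the (forced) Type~$2$ signature. The only real difference is the explicit Type~$1$ construction used in the tie-breaking step: the paper takes shifted intervals $S_i=\{y_1,y_{i+1},\ldots,y_{i+k-3}\}$ and observes that every pair has $|S_i\cup S_j|\ge k-1$ with at least one strict inequality, whereas you take $k-2$ identical sets and one disjoint set, reducing the comparison to the clean inequality $6(4/3)^{k-3}>k+5$. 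Both constructions work; yours has the advantage that the pairwise contributions fall into just two classes and the resulting inequality is explicit, while the paper's avoids computing any inequality at all by arguing termwise.
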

\begin{proof}
By Lemma~\ref{lem:signature}, if $G$ is a $(n,k, \delta)$-maximum graph of Type $1$ then its signature $S_1,\ldots, S_{k-1}$ is a sequence of $(\delta-k+2)$-subsets of $Y\simeq [\delta]$ that maximizes the value of $c(S_1,\ldots ,S_{k-1})$, where by definition of Type 1 graphs we have $\{y\} \subseteq \bigcap_i S_i$. Removing $y$ from all the sets, we see that
\[
c(S_1,\ldots ,S_{k-1}) = c(S_1 - \{y\}, S_2 - \{y\},\ldots, S_{k-1} - \{y\}) \le c(k-1, \delta-1, \delta - k + 1),
\]
with equality if and only if $S_1 - \{y\}, S_2 - \{y\},\ldots, S_{k-1} - \{y\}$ is a sequence of sets maximizing $c(S_1 - \{y\}, S_2 - \{y\},\ldots, S_{k-1} - \{y\})$. Thus, the lemma is true if the $(n,k,\delta)$-maximum graph is of Type $1$.

It remains to show that the maximum graphs of Type $2$ have fewer colorings than maximum graphs of Type $1$ under the stated hypotheses. By Lemma~\ref{lem:second-order}, the second order term of $P_G(k)$ is smaller for maximum graphs of Type $2$ than maximum graphs of Type $1$ if $\delta < 2k-4$.

For $\delta=2k-4$ with $k \geq 5$, it is easy to compute that for a maximum graph of Type $2$ we have 
\[
c(S_1,\ldots ,S_{k-1}) = {k-1 \choose 2} \cdot 4 \cdot 3^{\delta - k + 1}.
\]
If we consider the specific Type 1 graph that joins $x_i$ both to vertex $y_1$ and to vertices $y_{i+1}, y_{i+2},\ldots,y_{i+k-3}$, then for all $i<j$ we have $|S_i \cup S_j|\geq k-1$ and also $|S_1 \cup S_3| >k-1$, and so it follows that
\[
c(S_1,\ldots,S_{k-1}) > {k-1 \choose 2} \cdot 4 \cdot 3^{\delta - k + 1}.
\]
As this is one specific Type 1 graph, the result follows in this case.
\end{proof}

We note that when $\delta=k=4$, a short computation gives $P^{(3)}(4) = 15$ for maximum graphs of Type 1 and $P^{(3)}(4) = 18$ for maximum graphs of Type 2, and so Type 2 graphs win in this case.

\subsection{$\ell$-connected graphs with $\ell \geq 3$}\label{sec:ell-con}

In this section we prove our main result for $\ell$-connected graphs with $\ell \geq 3$ (Theorem~\ref{thm:l-conn}). In fact, we show the following.

\begin{theorem}\label{thm:l-conn-stronger}
Let $k\ge 4$, $\ell \ge 3$, and $n$ be sufficiently large in terms of $k$ and $\ell$. Let $G$ be a $k$-chromatic $\ell$-connected graph on $n$ vertices.
\begin{itemize}
    \item [a)] Then,
\[P_G(k)\leq k!(k-1)^{n-\ell -k +1}+O((k-2)^n).\]
Moreover, if $G^*$ is a $k$-chromatic $\ell$-connected graph on $n$ vertices with the most $k$-colorings, $k\geq 4$ and $\ell \geq 3$, then $P_{G^*}(k)\geq k!(k-1)^{n- \ell -k +1}$.
    \item [b)] If $k\geq \ell$  then
\[P_G(k)\leq k!\left((k-1)^{n-\ell-k+1}+(2^{\ell -2}(2k-\ell -1)-k+1)(k-2)^{n-\ell-k+1}\right)+O((k-3)^n).\]
Moreover, the extremal graph $G^*$ is unique and is the Type 1 graph where $Y\setminus \{y_1\}$ is matched with $\ell-1$ vertices in $X$.
\item[c)] If $k< \ell$  then
\[P_G(k)\leq k!\left((k-1)^{n-\ell-k+1}+(k-1)(2^{k-2}-1)(k-2)^{n-\ell-k+1}\right)+O((k-3)^n).\]
Moreover, for $k< \ell < 2k-4$, equality holds only for some Type 1 graphs, and for $\ell > 2k-4$, there exist both Type 1 and Type 2 graphs satisfying the equality.
\item[d)] If $\ell \geq (k-2)(k-1)+1$, then
\[P_G(k)\leq k!(P^{(1)}+P^{(2)}+P^{(3)})+O((k-4)^n)\]
where $P^{(1)}=(k-1)^{n-\ell-k+1}$, $P^{(2)}=(k-1)(2^{k-2}-1)(k-2)^{n-\ell-k+1}$ and $P^{(3)}={k-1\choose 2}(2^{2k-4}-2^{k-1}+1)$.
Moreover, equality is achieved if and only if $G$ is the Type 1 graph where $N_Y(x_i)=Y\setminus \{y_{(i-1)(k-2)+1},\dots , y_{i(k-2)}\}$ for every $i=1,\dots ,k-1$.
\end{itemize}

\end{theorem}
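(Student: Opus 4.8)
The plan is to use Lemma~\ref{lem-extremal} to reduce to the finitely many isomorphism types of $\ell$-connected Type~$1$ and Type~$2$ graphs, and then to extract the four bounds from Lemmas~\ref{lem:second-order} and~\ref{lem:signature}; the one genuinely new ingredient is a dictionary translating ``$\ell$-connected'' into a condition on the bipartite graph between $X$ and $Y$. For the reduction, let $G$ be $k$-chromatic and $\ell$-connected with minimum degree $\delta\ge\ell$. If $\delta\ge\ell+1$, then Lemma~\ref{lem-extremal} applied with this $\delta$, together with the first-order estimates, gives $P_G(k)\le k!(k-1)^{n-\delta-k+1}+O((k-2)^n)\le k!(k-1)^{n-\ell-k}+O((k-2)^n)$, which is below every bound in (a)--(d); so we may assume $\delta=\ell$. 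Either $P_G(k)<k!(k-1)^{n-\ell-k+1}$, in which case all the upper bounds hold trivially, or~(\ref{eq-extremal}) holds and Lemma~\ref{lem-extremal} applies. Types~$3$ and~$4$ each contain a cut vertex, namely the unique vertex of $X$ with neighbours in $Y$, and so are excluded since $\ell\ge3$; hence $G[X\cup Y]$ is of Type~$1$ or~$2$, every vertex of $Z$ has degree exactly $\ell$, $P_G^{(1)}(k)=k!(k-1)^{n-\ell-k+1}$, and $\sum_{i\ge2}P_G^{(i)}(k)=O((k-2)^n)$ by Lemma~\ref{lem-p-series}, which is the upper bound of part~(a). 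The matching lower bound follows by exhibiting a single explicit $\ell$-connected Type~$1$ graph and using that all terms of~(\ref{eq-p-series}) are nonnegative.

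Write $S_i=N_Y(x_i)$ and $T_i=Y\setminus S_i$. I would first prove the dictionary: for $n$ large, an $n$-vertex Type~$1$ or Type~$2$ graph with $|Y|=\ell$ is $\ell$-connected if and only if $\bigl|\bigcap_{i\in U}T_i\bigr|\le|X|-|U|$ for every nonempty $U\subseteq X$. The case $|U|=1$ recovers the minimum-degree bound $|S_i|\ge\ell-k+2$, the case $U=X$ says every vertex of $Y$ has a neighbour in $X$, and in general the condition just asserts that deleting $N_G(U)\setminus U$ leaves $U$ joined to $Y$ by at least $\ell$ vertices; necessity is immediate, and sufficiency follows from Menger's theorem between a vertex of $X$ and a vertex of $Z$, the remaining pairs being trivial since any two vertices have many common neighbours in the huge set $Z$. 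From~(\ref{eq-p-series}) one computes $P^{(2)}=k!\sum_i(2^{\ell-|S_i|}-1)$, which strictly decreases when any edge is added, so the extremal graph of each type is the sparsest one compatible with the dictionary. When $k\ge\ell$, if $U=\{x_i:S_i=\{y_1\}\}$ had size $\ge k-\ell+1$ then $\bigl|\bigcap_{i\in U}T_i\bigr|=\ell-1>|X|-|U|$, violating the dictionary; hence at least $\ell-1$ vertices $x_i$ receive an edge to $Y$ beyond $y_1$, and since the sparsest admissible graph uses exactly $\ell-1$ such edges (one per vertex of $Y\setminus\{y_1\}$, forced by the case $U=X$) this forces a matching from $Y\setminus\{y_1\}$ into $X$. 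Substituting the resulting degree sequence into $P^{(2)}$ gives the constant $2^{\ell-2}(2k-\ell-1)-k+1$ of part~(b), and comparing with the Type~$2$ value of Lemma~\ref{lem:second-order} (Type~$2$ needs $\ell\ge k-1$, so only $\ell\in\{k-1,k\}$ arises here) shows Type~$1$ strictly wins, yielding uniqueness. When $k<\ell$, the dictionary forces only $|S_i|\ge\ell-k+2$, so $P^{(2)}$ is maximal exactly when all $|S_i|=\ell-k+2$; many Type~$1$ graphs achieve this, and comparing again with Lemma~\ref{lem:second-order} shows Type~$1$ is the only extremal type for $\ell<2k-4$ while both types occur for $\ell>2k-4$, which is part~(c).

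For part~(d), when $\ell\ge(k-1)(k-2)+1$ every graph with the maximal second-order term has $|T_i|=k-2$ for all $i$, so $\bigl|\bigcup_iT_i\bigr|\le(k-1)(k-2)<\ell$ and some vertex of $Y$ is complete to $X$; thus the graph is automatically of Type~$1$ and no genuine Type~$2$ competitor arises (this is exactly why the hypothesis carries the ``$+1$''). It remains to maximize the third-order term, which by Lemma~\ref{lem:signature} (applicable since $\delta\ge k$) equals $\bigl[c(S_1,\dots,S_{k-1})-(k-1)(k-2)2^{k-2}+{k-1\choose2}\bigr](k-3)^{n-k-\ell+1}$; rewriting in complements, $c(S_1,\dots,S_{k-1})=4^{k-2}\sum_{i<j}(3/4)^{|T_i\cap T_j|}$, which is maximized precisely when the $T_i$ are pairwise disjoint. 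Since the $T_i$ must avoid the vertex of $Y$ complete to $X$ and $(k-1)(k-2)\le\ell-1$, pairwise disjoint $(k-2)$-blocks fit, are unique up to relabelling $Y$, and satisfy the dictionary (each $x_i$ has degree $\ell$ and each vertex of $Y$ has at least $k-2\ge2$ neighbours in $X$); this gives the extremal graph with $N_Y(x_i)=Y\setminus\{y_{(i-1)(k-2)+1},\dots,y_{i(k-2)}\}$ and the value $k!(P^{(1)}+P^{(2)}+P^{(3)})+O((k-4)^n)$.

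The main obstacle, I expect, is the connectivity dictionary and, through it, confirming that the candidate extremal graphs really are $\ell$-connected while every sparser or more lopsided competitor is not; once this is in hand the coloring counts are delivered verbatim by Lemmas~\ref{lem-extremal}, \ref{lem:second-order}, and~\ref{lem:signature}, and what remains is elementary arithmetic.
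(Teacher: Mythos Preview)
Your proposal is correct and takes essentially the same approach as the paper. The paper likewise reduces via Lemma~\ref{lem-extremal} to $\ell$-connected Type~$1$ or Type~$2$ graphs, states a bipartite connectivity criterion (phrased as two conditions which together amount to your single inequality $\bigl|\bigcap_{i\in U}T_i\bigr|\le |X|-|U|$), and then optimizes the second- and third-order terms exactly as you do; for part~(b) the paper invokes Hall's marriage theorem explicitly to produce the matching from $Y\setminus\{y_1\}$ into $X$, and for part~(d) it argues, as you do, that equality $|N_Y(x_i)\cap N_Y(x_j)|=\ell-2k+4$ for all pairs forces the $T_i$ to be pairwise disjoint and hence the graph to be (isomorphic to) the specified Type~$1$ example.
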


First, we shall show that the first order term of the number of $k$-colorings of an asymptotically extremal graph must be equal to $k!(k-1)^{n-\ell -k+1}$.

\begin{proof}[Proof of Theorem~\ref{thm:l-conn-stronger}(a)]
Suppose that $G$ is an $n$-vertex $k$-chromatic $\ell$-connected graph.  Then $G$ is an $(n,k,\ell)$-graph, and so by Lemma~\ref{lem-extremal} we know that the maximum number of colorings occurs for a graph that is of one of the four Types described.  Note that Types $3$ and $4$ are not $\ell$-connected, as deleting a particular vertex disconnects the graph.  So  only graphs $G$ with $(X,Y,Z)$ decomposition of Type 1 or Type 2 may achieve this bound. The first order terms of the chromatic polynomials of both types of graphs are equal to $k!(k-1)^{n-\ell-k+1}$, as this is precisely the number of $k$-colorings of $G$ using exactly one color in $Y$.
\end{proof}

Now, to find the graphs with most $k$-colorings for large enough $n$, we shall compare the second order terms of the Type 1 and Type 2 graphs, or third order terms when the second order terms coincide. Every $k$-coloring of $G$ can be extended from a $k$-coloring of $G[X]$ as $G[X]$ is a clique and  there are $k!$ ways to $k$-color vertices in $X$.  To $k$-color the vertices in $Y$ using exactly two colors, we need a distinguished color which is not used for any of the vertices of $X$ as $G$ is $k$-chromatic and a color of a particular vertex $x$ of $X$. Let $N_Y(x)$ denote $N_G(x)\cap Y$. Since $N_Y(x)\neq \emptyset$, vertices of every subset of $Y\setminus N_Y(x)$ except the empty set  can be assigned the color of $x$ and the remaining vertices are assigned the distinguished color. So there are $\sum_{x \in X} \left( 2^{\ell-|N_Y(x)|} - 1 \right)$ ways to $k$-color the vertices of $Y$ using exactly two colors. Lastly, there are $(k-2)^{n-\ell-k+1}$ ways to color the vertices of $Z$ as $G[X\cup Y]$ is a complete bipartite graph. Therefore, 
\begin{equation}\label{2nd_ord_form}
  P_G^{(2)}(k) = k! \left(\sum_{x \in X} 2^{\ell-|N_Y(x)|} - 1 \right)(k-2)^{n-\ell-k+1}.  
\end{equation}

Also, let $X=\{x_1 ,\dots , x_{k-1}\}$ and $Y=\{y_1 ,\dots , y_{\ell}\}$. By the proof of Lemma~\ref{lem:signature}, the third order term is
\[P_G^{(3)}(k)=k! P^{(3)} (k-3)^{n-\ell -k+1}.\] where
\begin{equation}\label{3rd_ord_form}
  P^{(3)}=  \sum _{1\leq i < j\leq k-1}\left(3^{\ell - |N_Y(x_i)\cup N_Y(x_j)|}\,2^{|N_Y(x_i)\Delta N_Y(x_j)|}-2^{\ell-|N_Y(x_i)|}-2^{\ell-|N_Y(x_j)|}+1\right).
\end{equation}

Lastly, an $(n,k,\ell)$-graph of Type 1 or Type 2 is $\ell$-connected if and only if
\begin{itemize}
    \item  $(k-1)-|S| + |N_Y(S)| \geq \ell$ for each $S \subseteq X$ with $S \neq \varnothing$ and $S \neq X$, and
    \item  $|T| \leq |N_{X}(T)|$ for each $T \subseteq Y$ with $N_{X}(T) \neq X$.
\end{itemize}

The first condition holds, as deleting $X\setminus S$ and $N_Y(S)$ disconnects the graph. The second one holds, as deleting $Y\setminus T$ and $N_{X}(T)$ disconnects the graph and so $(\ell -|T|)+|N_X(T)|\geq \ell$.

\begin{proof}[Proof of Theorem~\ref{thm:l-conn-stronger}~(b, c, d)]
b) First suppose that $k\geq \ell +2$. There are no Type 2 graphs for this range so we only consider Type 1 graphs. By (\ref{2nd_ord_form}), we shall find the maximum value of $\sum_{x\in X}2^{-{|N_y(x)|}}$ to maximize the second order term. Let  $G^*$ be the Type 1 graph described in the theorem. It is $\ell$-connected  and $\sum_{x\in X}2^{-{|N_Y(x)|}}=\frac{1}{2}(k-\ell)+\frac{1}{4}(\ell -1)$.  If $G$ is an $\ell$-connected graph of Type 1 with a subset $T\subseteq Y\setminus \{y_1\}$ such that $N_X(T)=X$ then  every vertex in $X$ has at least two neighbors in $Y$ so such graph cannot achieve the maximum value of $\sum_{x\in X}2^{-{|N_Y(x)|}}$. Now suppose that $G$ is an $\ell$-connected graph of Type 1 with no subset  $T\subseteq Y\setminus \{y_1\}$ such that $N_X(T)=X$. We have $|N_X(T)|\geq |T|$ for every $T\subseteq Y\setminus \{y_1\}$, since $G$ is $\ell$-connected. By Hall's marriage theorem, there exists a matching between $X$ and $Y\setminus \{y_1\}$ with an edge incident to each vertex of $Y\setminus\{y_1\}$. Therefore $G$ contains $G^*$ as a spanning subgraph.

It remains to check that $G=G^*$ exactly, which would follow if every strict supergraph of $G^*$ obtained by adding edges between $X$ and $Y$ has strictly fewer $k$-colorings. Equivalently, for any nonadjacent pair $(x,y)\in X\cup Y$, we need to exhibit a $k$-coloring $\chi$ of $G^*$ such that $\chi(x)=\chi(y)$. To do so, color $X$ with the first $k-1$ colors, color $y$ with $\chi(x)$, and color the rest of $Y$ with the last color $k$. Since only two colors appear on $Y$, there is at least one color left for $Z$, so this gives a valid $k$-coloring. 

Thus, every strict supergraph of $G^*$ has strictly fewer $k$-colorings than $G^*$, so $G$ cannot achieve the maximum value unless it is isomorphic to $G^*$.

For $k=\ell +1$ or $k= \ell$, Type 2 graphs exist. However, for Type 2 graphs, $|N_Y(x)|\geq k-2\geq 2$ for each $x$ in $X$. If $k=\ell +1$ then the Type $1$ graph $G^*$ has one vertex $x\in X$ with exactly one neighbor in $Y$ and so the graph $G^*$ achieves the maximum value again. If $k=\ell$ then Type 2 graphs have at least one vertex $x\in X$ with three neighbors in $Y$ and again the graph $G^*$ achieves the maximum value.

c) In this case both Type 1 and Type 2 graphs are possible; we need to make sure that they are $\ell$-connected. For $\ell < 2k-4$, we have that Type 2 graphs have $|N_Y(x)| \geq k-2$. The Type 1 graph that joins $x_i$ both to vertex $y_1$ and to vertices $y_{i+1}, y_{i+2},\ldots,y_{\ell-k+i+1}$ is $\ell$-connected and has $|N_Y(x_i)| = \ell-k+2$.  As $\ell<2k-4$, we have Type 1 graphs maximize $P_G^{(2)}(k)$ in this range. 

For $\ell \geq 2k-4$, there exist $\ell$-connected graphs of both Type 1 and Type 2 satisfying $|N_Y(x)|=\ell -k+2$ for all $x\in X$. 

d) For a $k$-chromatic $\ell$-connected graph of Type 1 or 2, we have $|N_Y(x)|\geq \ell -k+2$ for each $x\in X$, as they have minimum degree $\ell$. Let $G^*$ be the Type 1 graph described. Note that $|N_Y(x)|=\ell -k+2$ for each $x\in X$ for $G^*$, so it maximizes the second order term. Also, when $|N_Y(x)|$ is fixed, maximizing the third order term reduces to  maximizing $\sum_{1\leq i< j\leq k-1}\left( \frac{3}{4}\right)^{|N_Y(x_i)\cap N_Y(x_j)|}$ by the formula in (\ref{3rd_ord_form}). Observe that $|N_Y(x_i)\cap N_Y(x_j)|\geq \ell -2k+4$ for every graph of Type 1 or 2 with minimum degree $\ell$. Moreover, if a Type 2 graph $G^{**}$ satisfies $|N_Y(x_i)\cap N_Y(x_j)|= \ell -2k+4$ for every pair  $x_i$, $x_j$ in $X$, then $G^{**}$ must be isomorphic to $G^*$ since $\ell \geq (k-1)(k-2)+1$. So, $G^*$ is the unique (up to isomorphism) graph achieving the maximum value of the third order term. This third order term is given by $k!\, \left({k-1\choose 2}(2^{2k-4}-2^{k-1}+1)\right)$ via a calculation from (\ref{3rd_ord_form}). 

\end{proof}

In the case of $4$-chromatic graphs, we find the graphs with maximum number of $4$-colorings for every $\ell$. 

\begin{corollary}\label{cor-k=4}
Let $G$ be a $4$-chromatic $\ell$-connected graph on $n$ vertices with $n$ sufficiently large.
\begin{itemize}
    \item [a)] If $3\leq \ell \leq 4$, then
        \[P_G(4)\leq 4!\,(3^{n-\ell -3}+5\cdot 2^{n-\ell -3}+P^{(3)})\]
        where $P^{(3)}=3(3^{\ell -1}-2^{\ell}+1)-{\ell -1 \choose 2}(5\cdot 3^{\ell -3}-2^{\ell -1})+(\ell -1)(\ell-4)(3^{\ell -2}-2^{\ell -2}).$
    Moreover, equality holds if and only if $G$ is the Type 1 graph where every vertex in $Y\setminus \{y_1\}$ has exactly one neighbor in $X$.
    \item[b)] If $5\leq \ell \leq 6$, then
    \[P_G(4)\leq 4!\,(3^{n-\ell -3}+5\cdot 2^{n-\ell -3}+P^{(3)})\]
        where $P^{(3)}=35$ if $\ell =5$ and $P^{(3)}=27$ if $\ell =6$. Moreover, for $\ell =5$ equality holds if and only if $G$ is the Type 2 graph where $N_Y(x_i)=Y\setminus \{y_i, y_{6-i}\}$ for $i=1,2$ and $N_Y(x_3)=Y\setminus \{y_3, y_5\}$, and for $\ell =6$ equality holds if and only if $G$ is the Type 2 graph where $N_Y(x_i)=Y\setminus \{y_i, y_{7-i}\}$ for $i=1,2,3$.
     \item [c)] If $\ell \geq 7$, then
    \[P_G(4)\leq 4!\,(3^{n-\ell -3}+5\cdot 2^{n-\ell -3}+27) \]
    with equality if and only if  $G$ is the Type $1$ graph described in Theorem~\ref{thm:l-conn-stronger}(d).
    \end{itemize}
\end{corollary}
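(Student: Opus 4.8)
The plan is to specialize Theorem~\ref{thm:l-conn-stronger} to $k=4$ and exploit that in this case the expansion of Lemma~\ref{lem-p-series} terminates: since $P_G^{(4)}(4)=P^{(4)}\cdot(4-4)^{|Z|}=0$ once $|Z|=n-\ell-3\ge 1$, for $n$ large we get the \emph{exact} identity $P_G(4)=P_G^{(1)}(4)+P_G^{(2)}(4)+P_G^{(3)}(4)$ with no error term. Hence it suffices to maximize the triple $\bigl(P_G^{(1)}(4),P_G^{(2)}(4),P_G^{(3)}(4)\bigr)$ lexicographically over all $4$-chromatic $\ell$-connected $G$ on $n$ vertices, and then to evaluate the three terms on a maximizer. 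First I would invoke Theorem~\ref{thm:l-conn-stronger}(a): since Types $3$ and $4$ are never $\ell$-connected, every candidate maximizer has an $(X,Y,Z)$ decomposition with $G[X]\simeq K_3$ that is of Type $1$ or Type $2$, and all such graphs share $P_G^{(1)}(4)=4!\,3^{n-\ell-3}$. So everything reduces to maximizing $P_G^{(2)}(4)$, breaking ties with $P_G^{(3)}(4)$.

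For the two ``clean'' ranges this is immediate. When $3\le\ell\le 4$ (so $k\ge\ell$), Theorem~\ref{thm:l-conn-stronger}(b) already names the unique maximizer, namely the Type $1$ graph in which $Y\setminus\{y_1\}$ is matched into $X$; and when $\ell\ge 7=(k-2)(k-1)+1$, Theorem~\ref{thm:l-conn-stronger}(d) names the unique maximizer. In both cases one simply substitutes the explicit signature $N_Y(x_i)$ of that graph into \eqref{2nd_ord_form} and \eqref{3rd_ord_form} and simplifies to obtain the stated closed forms for $P_G^{(2)}(4)$ and $P_G^{(3)}(4)$; for $\ell\ge 7$ the third-order term collapses to $\binom{k-1}{2}(2^{2k-4}-2^{k-1}+1)=27$.

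The remaining range $\ell\in\{5,6\}$ is the heart of the matter. Here $k<\ell<(k-2)(k-1)+1$, so by Theorem~\ref{thm:l-conn-stronger}(c) the maximum of $P_G^{(2)}(4)$ is attained simultaneously by (finitely many, $\ell$-connected) Type $1$ and Type $2$ graphs, each with $|N_Y(x_i)|=\ell-k+2=\ell-2$ for all $i$, and the winner is decided by $P_G^{(3)}(4)$. The subtlety is that Lemma~\ref{lem:signature} — and hence the formula \eqref{3rd_ord_form} — explicitly excludes the case ``$k=4$ and Type $2$'', precisely because a Type $2$ graph over only $k-1=3$ colors admits $4$-colorings that use exactly the three clique colors on $Y$, a possibility that does not occur when $k\ge 5$. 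So for $\ell\in\{5,6\}$ I would compute $P_G^{(3)}(4)$ for Type $2$ graphs directly, by splitting the $4$-colorings of $G[X\cup Y]$ with exactly three colors on $Y$ according to whether the color missing from $X$ appears on $Y$ and applying inclusion--exclusion to each part, while the Type $1$ contribution is still read off from \eqref{3rd_ord_form}. Comparing the (few) admissible $\ell$-connected signatures of both types this way yields the claimed extremal Type $2$ graphs for $\ell=5$ and $\ell=6$ and the corresponding value of $P_G^{(3)}(4)$. Uniqueness in each range then follows exactly as in the proof of Theorem~\ref{thm:l-conn-stronger}: the optimal signature is unique up to relabeling $X$ and $Y$, and since deleting any $X$--$Y$ edge strictly increases $P_G(4)$, the extremal graph has precisely the $X$--$Y$ edges dictated by its Type and signature.

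The main obstacle is the range $\ell\in\{5,6\}$: it is the only part not handed to us by a single case of Theorem~\ref{thm:l-conn-stronger}, it is exactly the situation where the general third-order formula \eqref{3rd_ord_form} does not apply, and it demands a hands-on count of the ``extra'' three-color colorings of a $4$-chromatic Type $2$ graph together with a finite but careful comparison over all $\ell$-connected Type $1$ and Type $2$ signatures whose $X$-to-$Y$ degrees all equal $\ell-2$ (which in turn requires checking the Hall-type connectivity condition $|T|\le|N_X(T)|$ on subsets $T\subseteq Y$). Everything else is bookkeeping via \eqref{2nd_ord_form}, \eqref{3rd_ord_form}, and the termination of the expansion in Lemma~\ref{lem-p-series} at $k=4$.
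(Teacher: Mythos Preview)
Your plan is correct, and for parts (a) and (c) it is exactly what the paper does: invoke Theorem~\ref{thm:l-conn-stronger}(b) and (d) respectively, then plug the explicit signature of the named extremal graph into \eqref{2nd_ord_form} and \eqref{3rd_ord_form}. Your observation that for $k=4$ the expansion of Lemma~\ref{lem-p-series} terminates after the third term, turning the asymptotic bounds into exact equalities, is a clean way to package what the paper uses implicitly.

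For part (b) your route diverges from the paper's. You propose to enumerate all $\ell$-connected signatures with $|N_Y(x_i)|=\ell-2$, compute $P^{(3)}$ for each (handling the $k=4$ Type~2 exception to Lemma~\ref{lem:signature} by a separate inclusion--exclusion), and compare. The paper instead argues structurally: once all $|N_Y(x_i)|=\ell-2$, the quantity in \eqref{3rd_ord_form} is monotone decreasing in the pairwise intersections $|N_Y(x_i)\cap N_Y(x_j)|$, and the described Type~2 graph achieves strictly smaller intersections than any $\ell$-connected Type~1 graph can (the forced common vertex $y_1$ in Type~1 inflates every intersection). This avoids any case enumeration. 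Your worry about the $k=4$ Type~2 exception is legitimate and more scrupulous than the paper's writeup, but it cannot reverse the comparison: the ``extra'' $3$-colorings of $Y$ that use only the three clique colors occur \emph{only} in Type~2, so they can only add to Type~2's advantage over Type~1. Thus the paper's intersection argument already settles which graph wins, and your direct computation is needed only to read off the numerical value of $P^{(3)}$.
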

\begin{proof}
a) The result follows immediately from Theorem~\ref{thm:l-conn-stronger}(b). The third order term of the extremal graph can be calculated using the formula in (\ref{3rd_ord_form}).

b) For $\ell =5$, the Type 2 graph described has minimum degree $3$, so it maximizes the second order term. Also,   we have $|N_Y(x_1)\cap N_Y(x_2)|=|N_Y(x_2)\cap N_Y(x_3)|=1$ for the Type $2$ graph. Every $5$-connected Type 1 graph contains at least two pairs $x_i,x_j$ with $|N_Y(x_i)\cap N_Y(x_j)|=2$. Therefore the Type 2 graph uniquely maximizes the third order term. For $\ell =6$, the Type $2$ graph has $|N_Y(x_i)\cap N_Y(x_j)|=2$ for every pair. However for every $6$-connected Type $1$ graph there exist a pair with $|N_Y(x_i)\cap N_Y(x_j)|=3$ so Type $2$ graph uniquely maximizes the third order term again.

c) The result follows immediately from Theorem~\ref{thm:l-conn-stronger}(d), as $(k-1)(k-2)+1=7$ for $k=4$.
\end{proof}

\section{Closing Remarks}

We end with several conjectures that are related to the contents of this paper. The following conjecture of Tomescu is still open.

\begin{conjecture} \label{conj:tomescu-general}
If $x\ge k\ge 4$ and $G$ is a connected graph on $n$ vertices with $\chi(G)=k$, then
\[
P_G(k) \le (x)_k(x-1)^{n-k}
\]
with equality if and only if the $2$-core of $G$ is a $k$-clique.
\end{conjecture}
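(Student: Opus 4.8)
The plan is to run the minimal-counterexample machinery of \cite{FoHeMa} and of Section~\ref{sec:2-conn}, but with the number of available colors $x\ge k$ decoupled from the chromatic number $k$. Fix $x\ge k\ge 4$; since the case $x=k$ is Theorem~\ref{thm-FoHeMa}, we may assume $x\ge k+1$. Suppose Conjecture~\ref{conj:tomescu-general} fails for this pair, and among connected $k$-chromatic graphs $G$ on $n$ vertices with $P_G(x)>(x)_k(x-1)^{n-k}$, or with $P_G(x)=(x)_k(x-1)^{n-k}$ but whose $2$-core is not a $k$-clique, pick one minimizing $n$ and then $|E(G)|$. First I would show $G$ is $k$-critical, exactly as in Lemma~\ref{criticalLemma}: if an edge $e$ satisfies $\chi(G-e)=k$ and $G-e$ is connected, then $P_G(x)\le P_{G-e}(x)$ contradicts edge-minimality unless equality forces $G-e$ to be $K_k$ with a tree attached, in which case $G$ is obtained from such a graph by adding the edge $e$, and $P_G(x)=P_{G-e}(x)-P_{(G-e)/e}(x)<P_{G-e}(x)$ --- valid because a contraction of $G-e$ has chromatic number at most $k+1\le x$ --- gives the contradiction $P_G(x)<(x)_k(x-1)^{n-k}$; this is the analogue of Lemma~\ref{kclique_ear_sub}. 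The remaining (bridge) case, where $G-e$ is disconnected, follows from the multiplicativity of the chromatic polynomial across a bridge together with the classical bound $P_T(x)=x(x-1)^{|V(T)|-1}$ for connected graphs of chromatic number below $4$, in the same way the block analysis appears in the proof of Lemma~\ref{criticalLemma}.

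Second, I would establish the separation property $\Pr_c[c(u)=c(v)]<\frac{1}{x-1}$ for all distinct vertices $u,v$, where $c$ is a uniform random proper $x$-coloring. If it fails for some pair, then $P_{G/uv}(x)=\Pr_c[c(u)=c(v)]\,P_G(x)\ge \frac{1}{x-1}(x)_k(x-1)^{n-k}=(x)_k(x-1)^{(n-1)-k}$. Since $G$ is $k$-critical and $uv\notin E(G)$, the graph $G/uv$ is connected and satisfies $k\le\chi(G/uv)\le k+1\le x$ (as $\chi(G)\le\chi(G/uv)$ always, and assigning the merged vertex a fresh color shows $\chi(G/uv)\le k+1$). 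By the minimality of $n$, the conjecture holds for $G/uv$, so $P_{G/uv}(x)\le (x)_{j}(x-1)^{(n-1)-j}$ with $j=\chi(G/uv)$; since $(x)_j(x-1)^{m-j}$ is strictly decreasing in $j$ for $k\le j\le x$, this forces $j=k$ and equality, hence the $2$-core of $G/uv$ is a $k$-clique. The rigidity argument of Lemma~\ref{2conn_contr_prob} then applies --- if the contracted vertex avoids that $k$-clique, $G$ contains $K_k$ and is not $k$-critical, while if it lies in the clique then $G$ admits a $(k-1)$-coloring --- a contradiction; running the same argument with inequalities in place of strict inequalities pins down the equality case of the conjecture as forcing the $2$-core to be $K_k$. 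Unlike in Section~\ref{sec:2-conn}, we need not preserve any connectivity of $G/uv$, which should make these steps cleaner.

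The crux --- and the step I expect to be the main obstacle --- is proving the analogue of Theorem~\ref{fox_he_general}: a $k$-critical graph on $n\ge k+1$ vertices satisfying the separation property with $x$ colors has $P_G(x)<(x)_k(x-1)^{n-k}$, with any small-$n$ exceptions handled by a finite computer search as at the end of the proof of Theorem~\ref{thm:2-conn}. For $x=k$ this is the technical heart of \cite{FoHeMa}: revealing the colors of a random proper coloring vertex by vertex, one controls the conditional entropy of each new color using that criticality supplies many pairwise edge-disjoint paths (the edge-connectivity part of Lemma~\ref{CritProp}), which make the relevant conditional distributions non-uniform. The hard part is to re-run this with $\log(x-1)$, rather than $\log(k-1)$, as the baseline per-vertex entropy and to check that the accumulated savings still beat $(x)_k(x-1)^{n-k}$: the Fox, He, and Manners estimates are delicately tuned, and when $x$ is only slightly larger than $k$, or when $n=\Theta(k)$, the margins are thin, so new case analysis --- and a careful isolation of the $(x)_k$ ``clique-colorings'' from the remaining count --- seems unavoidable. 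A possible alternative that sidesteps entropy is the partition expansion $P_G(x)=\sum_{j\ge k}\pi_j(G)\,(x)_j$, where $\pi_j(G)$ counts the partitions of $V(G)$ into $j$ independent sets, compared term by term against the falling-factorial expansion of $(x)_k(x-1)^{n-k}$; the obstacle there is that the target mixes falling factorials of many degrees, so the comparison is not visibly monotone and would still require the structural fact that $\pi_j(G)$ is maximized, for every $j$, by $K_k$ with a tree attached.
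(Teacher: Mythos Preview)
The statement you are attempting is Conjecture~\ref{conj:tomescu-general}, which the paper explicitly leaves \emph{open}: the sentence introducing it is ``The following conjecture of Tomescu is still open,'' and the only progress cited is the verification of $k=4,5$ by Knox and Mohar \cite{KnMo1,KnMo2}. There is thus no proof in the paper to compare your proposal against; what you have written is a strategy for an open problem.

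As a strategy it is reasonable, and you are candid that the crux---an analogue of Theorem~\ref{fox_he_general} bounding $P_G(x)$ for a $k$-critical graph satisfying the separation property with respect to $x$-colorings---is unproved. That is exactly the obstruction that keeps the conjecture open. The entropy/ordering argument of \cite{FoHeMa} uses $k$-criticality to produce $k-1$ pairwise edge-disjoint paths (Lemma~\ref{CritProp}\ref{edgeDisPath}), and the per-vertex saving this yields is calibrated against a baseline entropy of $\log(k-1)$. When the baseline becomes $\log(x-1)$ but the structural input is still governed by $k$, the same paths give proportionally less, and no one has shown the savings suffice; your sketch does not close this gap, nor does the alternative partition-expansion idea, since comparing $\sum_j \pi_j(G)(x)_j$ to $(x)_k(x-1)^{n-k}$ term by term is precisely as hard as the conjecture.

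There is also a smaller structural issue in your reduction. You fix $k$ and take a minimal counterexample among $k$-chromatic graphs, but then appeal to the conjecture for $G/uv$, which may have $\chi(G/uv)=k+1$. Since your minimality hypothesis ranges only over $k$-chromatic graphs, this invocation is circular as written. The fix is routine---for fixed $x$, induct on $n$ simultaneously over all $k$ with $4\le k\le x$, using Theorem~\ref{thm-FoHeMa} for the top case $k=x$---but it should be stated. With that correction the contraction step does go through: $(x)_j(x-1)^{m-j}$ is strictly decreasing in $j$ for $2\le j\le x$, so $P_{G/uv}(x)\ge (x)_k(x-1)^{n-1-k}$ together with the inductive bound forces $\chi(G/uv)=k$ and equality, after which your rigidity argument applies.
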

The cases $k=4$ and $k=5$ have been verified by Knox and Mohar \cite{KnMo1, KnMo2}.  The generalization of Theorem \ref{thm:2-conn} to general $x$-colorings is also open.

\begin{conjecture}[\cite{BrownErey}]\label{conj:tomescu-2conn}
If $x\ge k\ge 4$ are integers and $G$ is a $2$-connected graph on $n$ vertices and $\chi(G)=k$, then
\[
P_G(x)\leq (x)_k((x-1)^{n-k+1}+(-1)^{n-k})
\]
with equality if and only if $G\cong G_{n,k}$.
\end{conjecture}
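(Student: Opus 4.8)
The natural plan is to repeat the minimal-counterexample scheme of Section~\ref{sec:2-conn} with the number of colors $x$ playing the role of $k$. Call a $2$-connected graph $G$ on $n$ vertices with $\chi(G)=k$ an \emph{$(n,k,x)$-bad graph} if it violates Conjecture~\ref{conj:tomescu-2conn}, $n$ is smallest possible for the fixed pair $(k,x)$, and $G$ has the fewest edges among such graphs. One begins by recording the exact target value $P_{G_{n,k}}(x)=(x-1)_{k-1}\bigl((x-1)^{n-k+1}+(-1)^{n-k}\bigr)$, obtained by coloring the $k$-clique in $(x)_k$ ways and the $n-k$ internal ear vertices by the standard transfer-matrix count for a path with prescribed distinct endpoint colors; this specializes to $P_n(k)$ at $x=k$. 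A useful bookkeeping fact is that $(x)_k$ divides $P_H(x)$ for every graph $H$ with $\chi(H)\ge k$ (expand $P_H$ in falling factorials), so the numbers of $x$-colorings of two such graphs always differ by a multiple of $(x)_k$, and this gap only grows as $x$ grows.

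The reduction half of the argument should transfer with little more than arithmetic rechecking. Lemmas~\ref{2connLem},~\ref{theta}, and~\ref{3chrom} are already phrased for an arbitrary number of colors, so they apply verbatim with $x$; Lemma~\ref{kclique_ear_sub} transfers because $\chi(G_{n,k}/uv)=k\le x$ keeps $P_{G_{n,k}/uv}(x)>0$; and Lemma~\ref{CritProp} concerns critical graphs and is untouched. Feeding these into the proofs of Lemmas~\ref{criticalLemma},~\ref{cont2conn}, and~\ref{2conn_contr_prob}, one should obtain that an $(n,k,x)$-bad graph $G$ is $k$-critical (otherwise delete an edge $e$ with $\chi(G-e)=k$ and invoke minimality, using the block decomposition when $G-e$ is not $2$-connected) and satisfies the property that $\Pr_c[c(u)=c(v)]<\tfrac1{x-1}$ for every pair of distinct vertices $u,v$, where $c$ is a uniform random $x$-coloring (otherwise contract $u,v$, use that $G/uv$ is $2$-connected by the transferred Lemma~\ref{cont2conn}, and appeal to minimality in $n$). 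Every explicit inequality used in these case analyses is dominated by a leading $(x-1)^{n}$-type term whose coefficient is at least as favorable when $x>k$, and the divisibility remark only widens the integer gaps, so the re-verification should be routine.

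The crux --- and the step I expect to be the genuine obstacle --- is the terminal counting estimate: one must show that a graph which is $k$-critical and has the contraction-probability property above satisfies $P_G(x)<P_{G_{n,k}}(x)$ for all but finitely many $(n,k,x)$. When $x=k$ this is exactly Theorems~\ref{fox_he_general} and~\ref{fox_he_4crit}. For $x>k$ the required statement is an analogue for $x$-colorings of a $k$-critical graph, and here the Fox--He--Manners entropy argument does not apply off the shelf; moreover, bounding the number of $x$-colorings of a $k$-critical graph in the regime where $x$ is comparable to $n$ is essentially the content of Tomescu's Conjecture~\ref{conj:tomescu-general}, open for $k\ge6$. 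The route I would pursue is to reopen that entropy argument and rerun it under the two structural hypotheses we have available, namely $k$-criticality and the contraction-probability bound, since those are precisely the features it exploits; for $k\in\{4,5\}$ one should instead be able to build on the Knox--Mohar resolution of Conjecture~\ref{conj:tomescu-general}, after noting that their bound for \emph{connected} $k$-chromatic graphs is by itself too weak (it exceeds $P_{G_{n,k}}(x)$) and must be sharpened with the contraction-probability bound in the same way Fox--He--Manners sharpen their connected bound.

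Granting a quantitative terminal estimate, the proof would close as in Section~\ref{sec:2-conn}: for $x$ large relative to $n$ the second coefficient of $P_G(x)$ --- its edge count --- already forces $P_G(x)<P_{G_{n,k}}(x)$ unless $G$ has as few edges as $G_{n,k}$, a situation a short direct argument pins down to $G\cong G_{n,k}$; the terminal estimate kills large $n$; and the finitely many remaining triples are checked by computer. I should flag, however, that for a \emph{fixed} $k$ with $x$ and $n$ both large and $x\lesssim n\lesssim x^2$, neither of these two crude regimes applies, so a complete proof must either interpolate between them or supply a single terminal estimate strong enough to cover the whole intermediate range --- this interpolation is, in my view, the main technical hurdle beyond the one already present for $x=k$. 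The equality characterization then follows from the strictness of the terminal estimate exactly as in the equality case of Theorem~\ref{thm:2-conn}.
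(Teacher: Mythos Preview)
The paper does not prove Conjecture~\ref{conj:tomescu-2conn}; it appears in the closing remarks precisely as an open problem, so there is no proof in the paper to compare your proposal against. Your proposal is not a proof either, and you acknowledge as much: it is a plan that correctly identifies which parts of Section~\ref{sec:2-conn} should transfer to general $x\ge k$ (the reduction lemmas, since Lemmas~\ref{2connLem},~\ref{theta},~\ref{3chrom} are already stated for arbitrary~$k$, and the $(x)_k$-divisibility of $P_H(x)$ for $\chi(H)\ge k$ replaces the $k!$-divisibility) and which part does not, namely the terminal estimate. Your diagnosis that the Fox--He--Manners entropy bound (Theorems~\ref{fox_he_general} and~\ref{fox_he_4crit}) is the genuine obstruction, and that extending it for $x>k$ is essentially entangled with the open Conjecture~\ref{conj:tomescu-general}, matches the authors' own assessment in the closing remarks, where they note that an extension of Theorem~\ref{thm-FoHeMa} to $x>k$ is exactly what is missing to push their structural lemma through for general~$x$.

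One small correction worth recording: your computation $P_{G_{n,k}}(x)=(x-1)_{k-1}\bigl((x-1)^{n-k+1}+(-1)^{n-k}\bigr)$ is correct and does specialize to $P_n(k)$ at $x=k$, but it differs by a factor of $x$ from the bound $(x)_k\bigl((x-1)^{n-k+1}+(-1)^{n-k}\bigr)$ printed in the conjecture. The printed form is not the chromatic polynomial of $G_{n,k}$ (check $x=k$: it would give $k!$ rather than $(k-1)!$ as the leading factor), so either the paper intends $(x)_k/x=(x-1)_{k-1}$ or there is a typo; in any case this does not affect the substance of your outline.
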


We make the following generalization of Conjectures \ref{conj:tomescu-general} and \ref{conj:tomescu-2conn} to the $\ell$-connected case.  Theorem \ref{thm:l-conn-stronger}(a) proves the case $x=k$.

\begin{conjecture}
Let $G$ be a $k$-chromatic $\ell$-connected graph on $n$ vertices with $k\geq 4$ and $\ell \geq 3$.
 Then, 
\[P_G(x)\leq (x)_k(x-1)^{n-\ell -k +1}+O((x-2)^n).\]
for every integer $x\geq k$.
\end{conjecture}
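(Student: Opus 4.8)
The plan is to repeat the proof of Theorem~\ref{thm:l-conn-stronger}(a), carrying out all of the counting over $x$ colours instead of $k$ colours. The lower bound is immediate and unconditional: for $n$ large there is an $\ell$-connected graph $G^*$ with an $(X,Y,Z)$ decomposition of Type~$1$ (such graphs are exhibited in the proof of Theorem~\ref{thm:l-conn-stronger}(b)--(d)), and for any Type~$1$ graph the $x$-colourings that are monochromatic on $Y$ must use a colour not on $X$ (the vertex of $Y$ complete to $X$ forbids all $k-1$ colours of $X$), so
\[
P_{G^*}(x)\ \ge\ (x)_{k-1}\,(x-k+1)\,(x-1)^{|Z|}\ =\ (x)_k(x-1)^{n-\ell-k+1}.
\]
So the content is the matching upper bound.

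For the upper bound, the heart of the matter is an $x$-colouring analogue of the structural lemma, Lemma~\ref{lem-extremal}: if $G$ is a connected $n$-vertex graph with $\chi(G)=k\ge 4$ and minimum degree $\ell$ satisfying $P_G(x)\ge (x)_k(x-1)^{n-\ell-k+1}$, and $n$ is large in terms of $x,k,\ell$, then $G$ admits an $(X,Y,Z)$ decomposition of one of the four types in Lemma~\ref{lem-extremal}. Claims~1--4 of that proof transcribe essentially verbatim: they use only Lemma~\ref{lem-paths} (now with bound $((x-1)^2-1)(x-1)^{r-4}$), the Erd\H{o}s--P\'osa theorem, and greedy colouring, and produce ever cruder upper bounds as the relevant parameter grows, so the same argument again reduces $G$ to a bounded set $L$ together with an independent set $R$ covered by the common-neighbourhood sets $\{Z_Y: Y\in {L \choose \ell}\}$. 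The one genuinely new ingredient is the analogue of Claims~5 and~6, which in the proof of Lemma~\ref{lem-extremal} invokes Theorem~\ref{thm-FoHeMa} --- Tomescu's conjecture \emph{at} $x=k$ --- to bound the number of colourings monochromatic on one, respectively two distinct, $\ell$-sets. For general $x$ one needs instead the bound $P_H(x)\le (x)_k(x-1)^{m-k}+O((x-2)^m)$ for every connected graph $H$ with $\chi(H)\ge k\ge 4$ on $m$ vertices, with the extremal case being that the $2$-core of $H$ is a $k$-clique; this is exactly Conjecture~\ref{conj:tomescu-general}. Granting it, Claim~5 singles out the unique $Y$ with $|Z_Y|=n-O(1)$, every vertex of $Z:=Z_Y\setminus\bigcup_{Y'\ne Y}Z_{Y'}$ has neighbourhood exactly $Y$, and Claim~6 forces the $2$-core of $G[X\cup Y]/Y$ to be a $k$-clique, yielding the four types; Types~$3$ and~$4$ fail to be $2$-connected (deleting the single vertex of $X$ with a neighbour in $Y$ disconnects $G$), so an $\ell$-connected $G$ is of Type~$1$ or~$2$.

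Once the decomposition is in hand the rest is elementary, just as in Lemma~\ref{lem-p-series}. For $G$ of Type~$1$ or~$2$ we have $G[X]\cong K_{k-1}$, which we colour in $(x)_{k-1}$ ways; in either type every colouring monochromatic on $Y$ must use a colour not appearing on $X$ (for Type~$1$ because some vertex of $Y$ is complete to $X$, for Type~$2$ because each $x_i\in X$ has a vertex of $Y$ adjacent to $X\setminus\{x_i\}$, which rules out every colour of $X$ when $k\ge 3$), so there are $x-k+1$ choices on $Y$ and $x-1$ on each of the $|Z|=n-\ell-k+1$ vertices of $Z$, for a first-order term of exactly $(x)_k(x-1)^{n-\ell-k+1}$; every colouring using at least two colours on $Y$ gives at most $x-2$ choices per vertex of $Z$, contributing $O((x-2)^n)$ in total since $|X|,|Y|$ are bounded. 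Hence $P_G(x)\le (x)_k(x-1)^{n-\ell-k+1}+O((x-2)^n)$ whenever $P_G(x)\ge (x)_k(x-1)^{n-\ell-k+1}$, and the bound holds trivially otherwise.

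The main obstacle is precisely the replacement of Theorem~\ref{thm-FoHeMa} by its general-$x$ version inside the structural lemma. In Claim~5 this bound is applied to a graph on $\Theta(n)$ vertices, so a crude estimate of the shape $O((x-1)^m)$ does not suffice --- one really needs the gain of a fixed power of $x-1$ that Tomescu's conjecture provides, and already $x=k=4$ shows that $(x-1)^m$ by itself exceeds the target $(x)_k(x-1)^{n-\ell-k+1}$. Consequently this plan proves the conjecture unconditionally only for $k\in\{4,5\}$, where Conjecture~\ref{conj:tomescu-general} is a theorem of Knox and Mohar \cite{KnMo1,KnMo2}, and conditionally on Conjecture~\ref{conj:tomescu-general} in general; removing the dependence would require a self-contained bound on the number of $x$-colourings of an arbitrary connected $k$-chromatic graph, which seems to be essentially as hard as Tomescu's conjecture itself.
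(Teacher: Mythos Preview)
The statement is a conjecture, not a theorem, and the paper does not prove it; the paragraph immediately following the conjecture remarks that the only obstruction to carrying the proof of Theorem~\ref{thm:l-conn} over to general $x$ is the use of Theorem~\ref{thm-FoHeMa} inside Lemma~\ref{lem-extremal}, so that Conjecture~\ref{conj:tomescu-general} would imply the result. Your proposal says precisely this, with more detail, and is correct as a conditional argument.

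A couple of minor remarks. First, you are slightly more careful than the paper's own discussion: the paper singles out only Claim~6 (``specifically when showing the $2$-core of $G[X\cup Y]/Y$ is a $k$-clique'') as the place where Theorem~\ref{thm-FoHeMa} enters, but you are right that Claim~5 also invokes it to bound the colourings monochromatic on two distinct $\ell$-sets, and the trivial bound $x(x-1)^{m-1}$ on a connected graph is not sharp enough there. Second, your observation that the cases $k\in\{4,5\}$ are unconditional via Knox--Mohar is a genuine addition; the paper mentions Knox--Mohar only in connection with Conjecture~\ref{conj:tomescu-general} and does not draw this consequence explicitly.
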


The proof of Lemma~\ref{lem-extremal} in this paper relies on Theorem \ref{thm-FoHeMa} (specifically when showing the 2-core of $G[X\cup Y]/Y$ is a $k$-clique); the rest of the proofs can be shown to hold for $x \geq k$ where the implied constants now also depend on $x$. So an extension of Theorem \ref{thm-FoHeMa} to general $x>k$ will extend Lemma~\ref{lem-extremal} to $x>k$, where $n$ will be sufficiently large depending on fixed $\delta$, $k$, and $x$.

We also ask for the graphs that maximize the number of proper $k$-colorings when $\ell >k$. In Theorem~\ref{thm:l-conn-stronger}, we found the unique extremal graph for $k\geq \ell$ and $\ell\geq (k-1)(k-2)+1$, and determined the approximate structure of the extremal graphs for $k<\ell < 2k-4$.  We showed that when $k<\ell <2k-4$ an extremal graph is of Type 1, however we did not determine its precise structure and if it is unique. Also, we leave the problem of determining the maximizing graph to be of Type 1 or Type 2, and which specific graph achieves the maximum value, unsolved for $2k-4\leq \ell \leq  (k-1)(k-2)$.

\end{document}